\crefname{equation}{}{} % Makes cref behave like eqref.
\crefname{enumi}{}{} % Avoid showing ``item (1)''
\theoremstyle{plain}
\newtheorem{lemma}{Lemma}[section]
\newtheorem{theorem}[lemma]{Theorem}
\newtheorem{corollary}[lemma]{Corollary}
\newtheorem{problem}{Problem}
\newtheorem{thmx}{Theorem}
\theoremstyle{definition}
\theoremstyle{remark}
\newtheorem{remark}[lemma]{Remark}
\newtheorem{example}[lemma]{Example}
\numberwithin{equation}{section}
\newcommand{\pt}{\partial_t}
\newcommand{\rS}{\mathrm{S}}
\renewcommand{\d}{\mathrm{d}}
\DeclareMathOperator{\supp}{supp}
\newcommand{\loc}{\mathrm{loc}}
\newcommand{\R}{\mathbb{R}}
\newcommand{\N}{\mathbb{N}}
\newcommand{\T}{\mathrm{T}}
\newcommand{\eps}{{\varepsilon}}
\newcommand{\bigo}{\ensuremath{\mathcal{O}}}
\newcommand{\Leb}{\ensuremath{\mathscr L}} % Lebesgue measure
\newcommand{\ie}{{\itshape i.e.}}
\newcommand{\eg}{{\itshape e.g.}} 
\newcommand{\cf}{{\itshape cf.}~}
\newcommand\Restr[1]{%
   % scale the vertical bar 15% vertically and 5% horizontally:
   \raisebox{-0.35ex}{$\scalebox{1.05}[1.15]{$\vert$}%
                      _{\scriptscriptstyle #1\mathstrut}$}}
\begin{document}

\title[Optimal transport via autonomous vector fields]{Optimal transport of  measures via autonomous vector fields}

\keywords{Optimal transport maps; autonomous velocity fields; controllability; pushforward of measures; disintegration of measures; Sudakov's theorem; linear homogeneous functional equations.}

\subjclass[2020]{49Q22, 93B05, 28A50.}

\author[N.~De Nitti]{Nicola De Nitti}
\address[N.~De Nitti]{EPFL, Institut de Mathématiques, Station 8, 1015 Lausanne, Switzerland.}
\email{nicola.denitti@epfl.ch}

\author[X.~Fern\'andez-Real]{Xavier Fern\'andez-Real}
\address[X.~Fern\'andez-Real]{EPFL, Institut de Mathématiques, Station 8, 1015 Lausanne, Switzerland.}
\email{xavier.fernandez-real@epfl.ch}

%------------------------------------------------

\begin{abstract}
We study the problem of transporting one probability measure to another via an autonomous velocity field. We rely on tools from the theory of optimal transport. In one space-dimension, we solve a linear homogeneous functional equation to construct a suitable autonomous vector field that realizes the (unique) monotone transport map as the time-$1$ map of its flow. Generically, this vector field can be chosen to be Lipschitz continuous. We then use Sudakov's disintegration approach to deal with the multi-dimensional case by reducing it to a family of one-dimensional problems.

\end{abstract}

\maketitle

\section{Introduction}
\label{sec:intro}

\subsection{The problems} We are interested in the problem of transporting one probability measure into another using an autonomous vector field.

This problem can be viewed from two perspectives. The \emph{Lagrangian} one involves pushing the  first measure forward to the second via the time-$1$ map of the flow generated by the vector field:

\begin{problem}[Matching  measures via the flow generated by an autonomous vector field] \label{prob:1}
Given two probability measures $\mu_0, \mu_1 \in \mathcal P(\R^d)$, with $d\ge 1$, construct an autonomous vector field $v:\mathbb{R}^d \to \mathbb{R}^d$ for which there is a flow\footnote{~The ODE \cref{eq:ode} is interpreted in the following sense:  $t \mapsto \phi(t,x)$ is absolutely continuous and 
\[
\phi(t, x)=x+\int_0^t v(\phi(s,x)) \, \d s, \quad \text{ for all } t \ge 0,
\] holds for all $x \in \R^d$.},  
\begin{align}\label{eq:ode}
    \begin{cases}
    \pt \phi(t,x) = v(\phi(t,x)), & t >0, \ x \in\R^d \\
    \phi(0,x) = x, & x \in \R^d,
    \end{cases}
\end{align}
that satisfies
\begin{align}\label{claim:exact}
    \phi(1,\cdot)_{\#}\mu_0 \equiv \mu_1.
\end{align}
\end{problem}
We recall that the measure denoted by $\phi(1,\cdot)_{\#} \mu_0$ is defined by  \[\left(\phi(1,\cdot)_{\#} \mu_0\right)(A)\coloneqq \mu_0\left(\phi(1,\cdot)^{-1}(A)\right), \quad \text{for every measurable set $A \subset \R^d$},
\]
and is called \emph{image measure} or \emph{push-forward} of $\mu_0$ through $\phi(1,\cdot)$.

\cref{prob:1} amounts to a question about \emph{exact controllability} for ordinary differential equations.

The second perspective is \emph{Eulerian} and involves a question of {exact controllability} for the continuity (partial differential) equation: steering the solution of the continuity equation from an initial state $\mu_0$ to a target state $\mu_1$ by using an autonomous velocity field $v$ as a control:

\begin{problem}[Exact controllability of the continuity  equation using an autonomous velocity] \label{prob:1bis}
Given two probability measures $\mu_0, \mu_1 \in \mathcal P(\R^d)$, with $d\ge 1$,  construct an autonomous vector field $v:\mathbb{R}^d \to \mathbb{R}^d$ such that a solution $\mu:[0,+\infty) \times \R^d \to \R$ to the Cauchy problem
\begin{align}\label{eq:tr}
    \begin{cases}
    \pt \mu(t,x) + \operatorname{div}_x(v(x) \,  \mu(t,x))= 0, & t >0, \ x \in \R^d, \\
    \mu(0,x) = \mu_0(x), & x \in \R^d,
    \end{cases}
\end{align}
 satisfies
\begin{align}\label{claim:exact-bis}
    \mu(1,\cdot) \equiv \mu_1.
\end{align}
\end{problem}

If  $v$ is smooth  then, by the \emph{method of characteristics}, the (unique) solution $\mu$ of \cref{eq:tr} can be represented using the (unique) flow $\phi$ of $v$, and vice-versa. 

More generally, if both $\mu$ and $\phi$ exist and are unique, the equivalence between \cref{prob:1} and \cref{prob:1bis} is a consequence of the \emph{Lagrangian representation formula} for the solution of \cref{eq:tr}: 
\begin{align}\label{eq:lagrangian-formula}
\mu(t,\cdot) \equiv \phi(t,\cdot)_\# \mu_0, \qquad t \ge 0.
\end{align}
This is not generally the case if we drop the uniqueness assumption on $\mu$. For example, even when \cref{eq:ode} has a unique flow and \cref{eq:lagrangian-formula} represents a solution (called the \emph{Lagrangian solution}) of \cref{eq:tr}, it does not necessarily encompass all solutions\footnote{\label{fn:lagrangian} ~In one space-dimension, if the vector field is continuous and autonomous, owing to  \cite[Proposition 5]{zbMATH07552043}, uniqueness for the  \cref{eq:ode} implies that every solution of \cref{eq:tr} is represented by \cref{eq:lagrangian-formula} and, in particular, uniqueness for \cref{eq:tr}. When we drop the continuity assumption, this is generally false (see \cite{MR4002209}). 

In any space-dimension, the claim is true for \emph{non-negative measures} by Ambrosio's superposition principle (see \cite[Theorem 8.2.1]{MR2401600}); however, the superposition principle cannot be extended to signed solutions (see \cite{MR3778560}). On the other hand, in the class of signed measures, the claim still holds true, \eg, if the velocity field is either Lipschitz continuous (see \cite[Proposition 8.1.7]{MR2401600}), or log-Lipschitz continuous (see \cite[Théorème 5.1]{MR1288809}), or satisfying a quantitative two-sided diagonal Osgood condition (see \cite[Theorem 1]{MR2439520}). 
}; therefore, solving \cref{prob:1} provides a solution to \cref{prob:1bis}, but the converse does not necessarily hold. We refer to 
\cite{MR4002209,zbMATH07552043} for a discussion on the validity of \cref{eq:lagrangian-formula}.

The construction of an autonomous vector field addressing \cref{prob:1} (or \cref{prob:1bis}) is not difficult to do if $\mu_0$ and $\mu_1$ are superpositions of Dirac deltas. Conversely, when the measures are not just superposition of deltas, even for $d = 1$ such a construction becomes more delicate. In this work, we focus our attention on solving \cref{prob:1} and \cref{prob:1bis} in the case when $\mu_0$ is absolutely continuous with respect to the Lebesgue measure, \ie, $\mu_0 \ll \mathscr L^d$, and has a continuous density. Our strategy is based on tools from the theory of optimal transport of measures (see, \eg, \cite{MR2459454,MR1964483,MR4659653,MR4294651,MR4655923,MR3409718,MR3058744} for an overview of the topic). In particular, we build an autonomous velocity field from a given Monge's optimal transport map in dimension $d = 1$. That is, we turn to the following question:

\begin{problem}[Realizing a given optimal transport map as time-$1$ map of the flow associated with an autonomous vector field]\label{prob:2}
Given two probability measures $\mu_0, \mu_1 \in \mathcal P_{\mathrm{a.c.}}(\R)$, with $d = 1$,   construct an  autonomous vector field $v: \R \to \R$  such that 
    \[\phi(1,\cdot) \equiv \T, \]
where $\phi$ solves \cref{prob:1}, and  $\T$ is the unique monotone map with $\T_\#\mu_0 = \mu_1$.
\end{problem}

\subsection{Our results and structure of the paper} First, in \cref{sec:1d}, we solve \cref{prob:2} (and, as a byproduct, \cref{prob:1} and \cref{prob:1bis} in case $d = 1$). In one space-dimension, the results of the theory of optimal transport are very sharp: provided that the source measure $\mu_0$ has no atoms, there exists only one monotone non-decreasing transport map (which is optimal for the cost $\mathrm{c}(x,y) \coloneqq |x-y|^p$, with $p\ge 1$), as recalled in \cref{th:ot1d}. We show that this map can be realized as time-$1$ map of an autonomous vector field.

\begin{thmx}[Exact controllability, $d=1$]
\label{th:A}

Let   $\mu_0, \mu_1 \in \mathcal{P}_{\mathrm{a.c.}}(\R)$ be two probability measures with convex support\footnote{~We recall that, given a (non-negative) measure $\mu$ on a measurable topological space $(X,\Sigma = {\rm Borel}(X))$, $\supp \mu \coloneqq \overline{\{A\in \Sigma:\, \mu(A) \neq 0\}}$. In particular, the support of a measure is a closed set and, for any compact $K\subset \R$, under our assumptions, we have that the densities are bounded from below by a positive constant in $K\cap {\rm supp}(\mu_i)$.}, and continuous densities positive in their supports. Then, there exists a solution to \cref{prob:1} and \cref{prob:2}. 

More precisely, there exists an autonomous velocity field $v$ and a unique solution  $\phi$ to \cref{eq:ode}, up to time $t = 1$, which  satisfies $\T \equiv \phi(1, \cdot)$ in ${\supp \mu_0}$, and thus \cref{claim:exact}, where $\T$ is the unique monotone transport map between $\mu_0$ and $\mu_1$.  
\end{thmx}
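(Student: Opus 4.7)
The plan is to reduce \cref{prob:2} to a functional equation for a conjugacy, namely an Abel-type equation, and to solve it by a fundamental-domain construction adapted to the dynamics of $\T$.

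First I would invoke the one-dimensional optimal transport theory (the cited \cref{th:ot1d}) to produce the unique monotone transport map $\T$ between $\mu_0$ and $\mu_1$. Under the stated assumptions (continuous, positive densities on convex supports), $\T=F_{\mu_1}^{-1}\circ F_{\mu_0}$ is a $C^1$ strictly increasing homeomorphism between the two supports. Now comes the key reduction: if one can find a strictly increasing $C^1$ function $\Psi$ (defined on $\supp\mu_0$) satisfying the Abel functional equation
\[
\Psi(\T(x))=\Psi(x)+1,
\]
then setting $v(y):=1/\Psi'(y)$ makes the map $\phi(t,x):=\Psi^{-1}(\Psi(x)+t)$ the unique flow of $v$, and evaluating at $t=1$ yields exactly $\T$. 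Differentiating the Abel equation shows that $w:=\Psi'$ solves the linear homogeneous functional equation $w(\T(x))\,\T'(x)=w(x)$, which is the "linear homogeneous functional equation" mentioned in the abstract; equivalently, $v=1/w$ must satisfy $v(\T(x))=v(x)\,\T'(x)$.

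To construct $\Psi$, I would decompose $\supp\mu_0$ according to the dynamics of $\T$: let $F:=\{x\in\supp\mu_0:\T(x)=x\}$ and consider the maximal open intervals $I$ of $\supp\mu_0\setminus F$, on each of which $\T-\Id$ has a definite sign. On such an interval $I$ (say with $\T>\Id$), pick $x_0\in I$; then the iterates $(\T^n(x_0))_{n\in\mathbb{Z}}$ form a strictly increasing sequence that exhausts $I$ between its limit endpoints. I would choose $\Psi$ freely on the fundamental domain $[x_0,\T(x_0)]$ as a $C^1$ strictly increasing function with $\Psi(x_0)=0$, $\Psi(\T(x_0))=1$, and then extend to all of $I$ by iterating the Abel relation. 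The freedom on the fundamental domain can (and must) be used to impose the compatibility condition at orbit endpoints
\[
\Psi'(\T(x_0))=\frac{\Psi'(x_0)}{\T'(x_0)},
\]
which is exactly what is needed for $\Psi$ to be $C^1$ across the endpoints of every fundamental cell by the chain rule applied to the Abel identity. (For Lipschitz $v$, one imposes a second-order matching, which is possible whenever $\T$ is $C^{1,1}$, hence "generically".) Setting $v:=1/\Psi'$ on $I$ gives a continuous positive vector field whose flow realizes $\T$ on $I$.

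Finally, I would glue together the pieces: on $F$, set $v\equiv 0$, so that $\phi(t,x)=x=\T(x)$ there; and I would check that $v$ extends continuously across the interfaces between the regions (which requires a careful choice of $\Psi'$ tending to $+\infty$ near the fixed points from the non-trivial side, since $v$ must vanish there). For the flow up to time $t=1$ starting at $x\in\supp\mu_0$: inside each $I$ the explicit formula $\phi(t,x)=\Psi^{-1}(\Psi(x)+t)$ yields a unique trajectory remaining in $I$ (the orbit cannot reach the boundary of $I$ in finite time because $\Psi$ is proper on $I$ by construction), and at fixed points trajectories are stationary, so no orbit crosses $\partial F$ in time $\le 1$; uniqueness then follows from continuity of $v$ combined with this one-sided barrier structure. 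I expect the main obstacle to be precisely this last point: arranging uniqueness of the flow at $F$ when $v$ merely vanishes without being Lipschitz, which forces a delicate prescription of the rate at which $\Psi'\to+\infty$ near $F$ (an Osgood-type condition on $v$ near its zeros). The other delicate issue is the unbounded case, where one must verify that the iteration $\T^n(x_0)$ exhausts the whole (possibly infinite) interval $I$, so that $\Psi$ is indeed defined on all of $I$.
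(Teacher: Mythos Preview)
Your proposal is correct and follows essentially the same route as the paper: reduce to the Abel equation $\Psi(\T(x))=\Psi(x)+1$ (equivalently, the Acz\'el--Jabotinsky--Julia equation $v(\T(x))=\T'(x)v(x)$ for $v=1/\Psi'$), decompose $\supp\mu_0$ by the fixed-point set of $\T$, build $v$ on a fundamental domain $[x_0,\T(x_0)]$ with the $C^1$ matching condition and extend by iteration, and obtain uniqueness of the flow from an Osgood-type divergence of $\int 1/|v|$ near each fixed point. One small caveat: you should not expect $v$ to extend continuously across $\partial F$ in general---the paper shows continuity there requires $\bar\mu_0\neq\bar\mu_1$ at the fixed point (otherwise $v$ may be unbounded or fail to be $L^1_{\mathrm{loc}}$), though this does not affect existence and uniqueness of the flow.
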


Some remarks are in order: 

\begin{remark}[Non-uniqueness of the velocity field]\label{rk:non-uniqueness1}
    The velocity field is non-unique and, in general, as we will see in its construction in \cref{ssec:proof-1d}, it is obtained from an arbitrary prescription in an open set. More precisely, given $x_0\in{\rm supp}(\mu_0)$, we \emph{arbitrarily} fix the velocity field in $(x_0, \T(x_0))$ and then extend it \emph{uniquely} to the interval between consecutive fixed points of $\T$ containing $x_0$. Morally, this is enough because the final position of any $y\in (x_0, \T(x_0))$ can be modulated only by the values of the velocity field in $(\T(x_0), \T(y))$. 
\end{remark}

\begin{remark}[On the solution to \cref{prob:1bis}]
\label{rmk:prob2}
    The $v$ constructed also gives a solution to \cref{prob:1bis} in a suitable sense. That is, the function $\mu(t, \cdot) = \phi(t, \cdot)_\# \mu_0$ satisfies \cref{eq:tr} as follows: 
    There exists a discrete set $\partial\mathcal{S} = \partial\{x = \T(x)\}$ where $v = 0$ such that $\mu(t,\cdot)$ satisfies \cref{eq:tr} in the distributional sense in ${\rm supp}(\mu(t,\cdot))\setminus \partial\mathcal{S}$ and also a no-flow condition through $\partial \mathcal{S}$ (namely,  trajectories starting outside of $\partial\mathcal{S}$ never reach $\partial \mathcal{S}$ in finite time). 
   
    The need for the previous notion is because, as we will show in \cref{lm:ct}, the velocity fields constructed do not have to be $L^1_{\rm loc}$ in general at points $\partial\mathcal{S}$, and thus the $\mu(t,\cdot)$ above need not be a distributional solution across $\partial\mathcal{S}$.          Somewhat related notions of solutions have been employed in  \cite{zbMATH03737992,zbMATH06243849,zbMATH07599881} (in different contexts\footnote{~In particular, in the autonomous setting, Aizenman, in \cite{zbMATH03737992}, proved that a suitable generalized flow avoids a subset $S\subset \R^d$ provided that the vector field is sufficiently regular and $S$ has sufficiently small box-counting dimension. In \cite{zbMATH06243849}, this result was extended to the non-autonomous setting. More recently, in \cite{zbMATH07599881}, the authors proved  Ambrosio's uniqueness result (see \cite{MR2096794}) by allowing the presence of a compact set of singularities $S \subset [0,T]\times \R^d$, such that $b|_{\Omega} \in L^1([0,T]; \mathrm{BV}_{\loc}(\R^d))$ for all compact sets $\Omega \subset S^c$.}).  

    Finally, we note that such solutions are the unique distributional solutions on ${\rm supp}(\mu(t,\cdot))\setminus \partial\mathcal{S}$.  Indeed, this follows from the uniqueness of the flow for continuous and signed velocities in  \cite[Proposition 5.2]{MR4002209}, which we can apply in the open intervals between fixed points.

\end{remark}

\begin{remark}[On the positivity assumption]
As it will be clear from the proofs, the assumption that the densities of $\mu_i$ are positive in ${\rm supp}(\mu_i)$ could be weakened to being positive only in ${\rm int}({\rm supp}(\mu_i))$ instead. In this case, however, velocities would blow-up or vanish at the endpoints of the supports. On the other hand, removing the positivity assumption in the interior would allow for velocities blowing up or vanishing in the interior as well, interfering with the notions of solution used to make sense of the previous problems.
\end{remark}

  We refer to \cref{th:main-1d} for the precise statement of \cref{th:A}.  In particular, under suitable assumptions, we obtain further structure and regularity properties for the velocity field. The proof of this result is connected to the theory of linear homogeneous functional equations (see  \cite{MR1994638, zbMATH03252776, zbMATH03164897,zbMATH03445777,zbMATH03312170, zbMATH00194090,   zbMATH03618689, MR557216}).

We would like to remark that, in the previous construction, the arising vector fields $v$ are not necessarily continuous in general (nor it is expected, even though, in \cref{rmk:preva}, we show that they are generically Lipschitz continuous), and they are only piecewise continuous.  Even with that, we show the well-posedness (in particular, uniqueness) of the flow. On the other hand, as we show in \cref{lm:ct}, in some settings, the only candidates for $v$ that are admissible either do not generate unique flows or are not even $L^1$. If we wanted a well-posedness theory both for the Lagrangian and Eulerian formulations, we would need to weaken some of the assumptions. This is the goal of the following result: if we relax the requirement that $\mu_1$ should be achieved \emph{exactly}, it is possible to further improve the (global) regularity of the velocity field $v$ to make it Lipschitz (see \cref{prop:approx} for a more precise statement), and thus be able to use the full strength of the well-posedness theory for both the flow and the continuity equation:

\begin{thmx}[Approximate controllability, $d=1$]
\label{th:B}
In the setting of \cref{th:A}, if, furthermore, $\mu_0$ has a Lipschitz continuous density, then for any $\eps > 0$, there exists $\mu_1^{\eps}$ satisfying the same hypotheses as $\mu_1$ and with\footnote{~Here $\mathrm{dist}(\mu_1, \mu_1^{\eps}) < \eps$ can be understood either  in the $L^1$ or in the Wasserstein sense.} $\mathrm{dist}(\mu_1, \mu_1^{\eps}) < \eps$ such that the corresponding vector field $v_\eps$ from \cref{th:A} can be taken Lipschitz continuous.  
\end{thmx}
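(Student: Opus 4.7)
The vector field $v$ produced in Theorem A is built piecewise on the connected components of $\supp(\mu_0)\setminus \partial\mathcal{S}$, where $\partial\mathcal{S} = \{x:\T(x)=x\}$, via the functional equation $v(\T(x)) = \T'(x)\, v(x)$ extending an arbitrarily prescribed base profile on a short subinterval. Under the strengthened assumption that $\mu_0$ has a Lipschitz density, $\T'$ is at least continuous on compact subintervals of the interior of $\supp(\mu_0)$, and the functional equation preserves Lipschitz regularity on the interior of each component. The two obstructions to making $v$ globally Lipschitz on $\supp(\mu_0)$ are therefore: (i) the fixed-point set $\partial\mathcal{S}$ may accumulate inside $\supp(\mu_0)$, and (ii) at an isolated fixed point $x^*$, the time-$1$ constraint $\int_{x}^{\T(x)} \dd y/v(y) = 1$ combined with $v(x^*)=0$ forces the asymptotic slope $v(y)\sim \log(\T'(x^*))(y-x^*)$, which is degenerate precisely when $\T'(x^*) = 1$.

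The first step of my plan is to construct $\mu_1^\eps$ with Lipschitz, positive, continuous density on convex support, with $\mathrm{dist}(\mu_1,\mu_1^\eps)<\eps$, and such that the new monotone transport map $\T_\eps := F_{\mu_1^\eps}^{-1}\circ F_{\mu_0}$ has only isolated fixed points on any compact subinterval of $\supp(\mu_0)$, with $\T_\eps'(x^*) \neq 1$ at each of them. Concretely, I would first mollify $\mu_1$ to obtain a smooth, positive approximation; then apply a small, mass-preserving, compactly supported perturbation so that the graph of $F_{\mu_1^\eps} - F_{\mu_0}$ crosses zero transversally. Since the condition $(\T_\eps)'(x^*)\neq 1$ is equivalent to $\mu_0(x^*)\neq \mu_1^\eps(x^*)$ at the intersection, this is a transversality requirement on cumulative distribution functions; its genericity follows from a standard Sard-type argument on a suitable finite-parameter family of admissible perturbations.

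The second step is the Lipschitz construction of $v_\eps$. On each maximal open interval $(a,b)$ between consecutive fixed points of $\T_\eps$, I would apply the construction of Theorem A. Since both $\mu_0$ and $\mu_1^\eps$ have Lipschitz positive densities, $\T_\eps'$ is Lipschitz and positive on compact subintervals of $(a,b)$, so the functional equation preserves the Lipschitz regularity of the base profile on the interior of $(a,b)$. At each fixed-point endpoint $x^*$ of $(a,b)$, linearizing the flow $\dot y = v_\eps(y)$ shows that the time-$1$ constraint forces the asymptotic behavior $v_\eps(y)\sim \log(\T_\eps'(x^*))(y-x^*)$, which is finite (and hence Lipschitz) precisely because $\T_\eps'(x^*) \neq 1$. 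Since the base profile varies freely in an infinite-dimensional space of Lipschitz functions while the required matching at the two endpoints of $(a,b)$ amounts to finitely many conditions (the asymptotic slopes), I expect to tune the base profile so that $v_\eps$ is globally Lipschitz on $\supp(\mu_0)$.

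The main obstacle will be showing that the iterated base profile, transported forward and backward by the functional equation, indeed converges in the Lipschitz sense to the prescribed linear asymptotic at each fixed-point endpoint. This reduces to controlling an infinite product of $\T_\eps'$-values along the orbit of the base point under $\T_\eps$; its convergence should follow from the Lipschitz regularity of $\T_\eps'$ combined with the geometric contraction $|\T_\eps^{n+1}(x_0) - x^*| \sim \T_\eps'(x^*)\,|\T_\eps^n(x_0) - x^*|$ near $x^*$, which makes the relative error summable. Once this convergence is established, realizing the prescribed asymptotic slopes at both endpoints of each interval becomes a finite-dimensional interpolation problem, solvable in the infinite-dimensional space of Lipschitz base profiles.
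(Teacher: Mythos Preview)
Your approach is essentially the same as the paper's: perturb $\mu_1$ so that the new monotone map has $\T_\eps'\neq 1$ at every fixed point (a Sard-type transversality argument), and then invoke the Lipschitz regularity already established in the proof of \cref{th:A}. Two minor points where the paper is cleaner. First, for the perturbation the paper simply takes the one-parameter translation $\T_\lambda\coloneqq \T-\lambda$ and sets $\mu_1^\eps\coloneqq(\T-\lambda)_\#\mu_0$; since $\T_\lambda'=\T'$, the condition ``$\T_\lambda'\neq 1$ on $\{\T_\lambda(x)=x\}$'' is exactly ``$\lambda$ is a regular value of $\T-\mathrm{Id}$'', which is immediate from Sard's theorem without building a multi-parameter family. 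Second, your worry about tuning the base profile to match prescribed asymptotic slopes at both endpoints is unnecessary: once $\T_\eps'(x^*)\neq 1$ and the densities are Lipschitz, the infinite-product estimate you describe (carried out in the paper in \cref{lem:S_1}) shows that \emph{any} Lipschitz base profile automatically extends to a function that is Lipschitz up to each fixed point, with the slope there determined rather than prescribed; the two-fixed-point case (\cref{lem:S_2}) is handled by fixing a single base profile in the middle and extending in both directions, so no matching arises.
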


 In particular, we also have a unique solution to \cref{prob:1bis} with vector field $v_\eps$ transporting $\mu_0$ into $\mu_1^\eps$ (\cf  \cref{rmk:prob2}).

Building on these results, in \cref{sec:multid}, we deal with the case $d \ge 2$. We use \emph{Sudakov's disintegration approach} (see \cite[Chapter 18]{MR4659653}) to decompose the multi-dimensional optimal transport problem into a family of one-dimensional problems, namely, optimal transport problems on a family of \emph{optimal transport rays} that forms a partition of ${\mathrm{Conv}(\supp \mu_0 \cup \supp \mu_1)}$.

Sudakov's optimal transport map can be written as the ``gluing'' the one-dimensional monotone optimal transport maps built along the transport rays. Correspondingly, we are able to build a vector field in $\R^d$ by putting together the one-dimensional vector fields constructed previously (see  \cref{th:main-d} for a precise statement).

\begin{thmx}[Exact controllability, $d \ge 1$]\label{th:C}
Let $\mu_0, \mu_1 \in \mathcal{P}_{\mathrm{a.c.}}(\R^d)$, with $d\ge 1$,  be two probability measures with convex support and continuous densities positive in their supports. Then there exists a solution to \cref{prob:1}.

More precisely, there exists an autonomous velocity field $v$ transporting $\mu_0$ into $\mu_1$ in the sense of \cref{claim:exact-bis} and there exists a  solution  $\phi$ to \cref{eq:ode}, up to time $t = 1$, which  satisfies $\T \equiv \phi(1, \cdot)$ in ${\supp \mu_0}$, and thus   \cref{claim:exact}, where $\T$ is Sudakov's optimal transport map between $\mu_0$ and $\mu_1$.
\end{thmx}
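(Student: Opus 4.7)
The plan is to lift the one-dimensional construction of \cref{th:A} to $\R^d$ by means of Sudakov's decomposition of the transport problem into one-dimensional problems along transport rays. First, I would invoke Sudakov's theorem for the cost $\mathrm{c}(x,y) = |x-y|$: there exists a Kantorovich potential $\psi: \R^d \to \R$ (a $1$-Lipschitz function) whose transport set $\mathcal{T} \subset \mathrm{Conv}(\supp \mu_0 \cup \supp \mu_1)$ is partitioned (up to a $\Leb^d$-null set) into maximal transport rays $\{R_\alpha\}_{\alpha \in \mathcal{A}}$, each of which is a non-degenerate segment along which $\psi$ is affine with unit slope. Parametrize each ray $R_\alpha$ by its unit direction vector $e_\alpha \in \mathbb{S}^{d-1}$ and encode the quotient structure by the projection $\pi: \mathcal{T} \to \mathcal{A}$, with corresponding quotient measure $\mathfrak{q} \coloneqq \pi_\# \mu_0$.

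Next, I would disintegrate both measures along the rays: $\mu_0 = \int_{\mathcal{A}} \mu_0^\alpha \, \d \mathfrak{q}(\alpha)$ and $\mu_1 = \int_{\mathcal{A}} \mu_1^\alpha \, \d \mathfrak{q}(\alpha)$, where each $\mu_i^\alpha$ is concentrated on $R_\alpha$. Under the positivity and continuity assumptions on the densities of $\mu_i$, together with the convexity of the supports, the classical regularity theory for Sudakov's decomposition (see \cite[Chapter 18]{MR4659653}) guarantees that, for $\mathfrak{q}$-a.e.\ $\alpha$, the conditional measures $\mu_0^\alpha$ and $\mu_1^\alpha$ are absolutely continuous with respect to $\Haus^1 \restricts{}{R_\alpha}$, with continuous and positive densities on their (convex, \ie, connected) supports inside $R_\alpha$. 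This is precisely the setting in which \cref{th:A} applies on each $R_\alpha$, yielding an autonomous scalar velocity field $f_\alpha$ on $R_\alpha$ and a flow $\phi^\alpha$ such that $\phi^\alpha(1,\cdot) \equiv \T_\alpha$ on $\supp \mu_0^\alpha$, where $\T_\alpha$ is the monotone rearrangement between $\mu_0^\alpha$ and $\mu_1^\alpha$. Sudakov's theorem guarantees that the global optimal transport map $\T$ restricts to $\T_\alpha$ on each $R_\alpha$.

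I would then define the vector field $v: \R^d \to \R^d$ by
\begin{equation*}
v(x) \coloneqq
\begin{cases}
f_{\pi(x)}(x)\, e_{\pi(x)}, & x \in \mathcal{T}, \\
0, & x \notin \mathcal{T},
\end{cases}
\end{equation*}
so that $v$ is tangent to the ray through each point. Because $v$ is parallel to $e_\alpha$ along $R_\alpha$, any integral curve of $v$ starting in $R_\alpha$ remains trapped in $R_\alpha$, and its evolution reduces to the one-dimensional ODE $\dot y = f_\alpha(y)$ studied in \cref{th:A}. Consequently, the flow $\phi(t,\cdot)$ exists and is unique on $\supp \mu_0$ up to $t=1$, ray-by-ray, and satisfies $\phi(1, x) = \T_{\pi(x)}(x) = \T(x)$ for $\mu_0$-a.e.\ $x$. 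Integrating the identity $\phi(1,\cdot)_\# \mu_0^\alpha = \phi^\alpha(1,\cdot)_\# \mu_0^\alpha = \mu_1^\alpha$ against $\mathfrak{q}$ yields $\phi(1,\cdot)_\# \mu_0 = \mu_1$, establishing both \cref{claim:exact} and \cref{claim:exact-bis}.

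The main obstacle is the technical handling of Sudakov's disintegration: ensuring Borel measurability of the ray decomposition $\alpha \mapsto (R_\alpha, e_\alpha)$, the $\mu_0$-negligibility of the degenerate set $\{x : \T(x) = x\}$ outside $\mathcal{T}$, and, crucially, the absolute continuity and regularity of the conditional measures $\mu_i^\alpha$ on $\mathfrak{q}$-a.e.\ ray. This last point is the heart of the difficulty (originally a gap in Sudakov's proof), and I would invoke the modern resolution via $\psi$ being $C^{1,1}$-smooth on the set of ray interiors and the density of $\mu_0$ being bounded away from zero on compact subsets, which yields the required regularity for the conditional densities. Once these ingredients are in place, the gluing argument above goes through ray-by-ray with only measurable (rather than continuous) dependence on $\alpha$, which suffices to make the flow and pushforward identities rigorous.
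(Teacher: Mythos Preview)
Your proposal is correct and follows essentially the same route as the paper: invoke Sudakov's ray decomposition, check that the conditional measures on each ray satisfy the hypotheses of the one-dimensional result, apply \cref{th:A} ray-by-ray to obtain $v_\alpha$, and glue. Two small points where the paper differs: first, because $\bar\mu_0$ and $\bar\mu_1$ are assumed continuous, the paper simply takes $\mu_i^\alpha$ to be the pointwise restriction of $\mu_i$ to the ray, so the absolute continuity and continuity of the conditional densities are immediate and the heavier Sudakov regularity apparatus you invoke (the $C^{1,1}$ structure of the potential, etc.) is unnecessary here; second, the paper does \emph{not} assert global uniqueness of the flow $\phi$---only uniqueness among flows whose trajectories are confined to the given transport rays (\cref{rmk:uniqueness})---so your claim that ``the flow $\phi(t,\cdot)$ exists and is unique on $\supp\mu_0$'' should be weakened accordingly, since the assembled $v$ need not have enough regularity to rule out other integral curves.
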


We note that, for \cref{th:C}, we no longer have the uniqueness of the flow; see \cref{rmk:uniqueness} for a short discussion. 

Finally, in \cref{sec:examples}, we illustrate these results by presenting some one-dimensional examples.

\subsection{Some related results}
\label{ssec:lit-rev}

In the available literature, \cref{prob:1} and \cref{prob:1bis} have been extensively analyzed in case the requirement on $v$ being autonomous is dropped. 

For example, the pioneering construction performed by Dacorogna and Moser  in \cite{MR1046081} provides a \emph{time-dependent} velocity field realizing \cref{claim:exact}: 
\[
v(t,x)\coloneqq \frac{ \nabla f(x)}{\bar \mu_0(1-t)+\bar \mu_1 t},
\]
where $\bar \mu_i$ denotes the (smooth and positive) density of $\mu_i$ (for $i \in \{0,1\}$), $f \in C^{\infty}(\R^d)$ is the unique solution of $-\Delta f=\bar \mu_1-\bar \mu_0$ with zero mean.

More recently, in \cite{MR3936028,MR4081517}, Duprez, Morancey, and Rossi constructed a \emph{time-dependent} and localized perturbation of a given velocity field to achieve \cref{claim:exact-bis} (the localization being in a given non-empty, open, and connected portion of $\R^d$). 

Finally, in \cite{MR4624336,ASZ24,zbMATH07786288}, \cref{prob:1} and \cref{prob:1bis} were studied with ``neural'' velocity functions, \ie, under the ansatz $v(t,x) \coloneqq  w(t)\sigma (\langle a(t), x\rangle + b(t))$, with $\sigma (x) \coloneqq \max\{ x, 0\}$ (the so-called \emph{activation function of the neural network}) and control parameters $a, w \in L^\infty ((0, 1); \R^d)$ and
$b \in L^\infty ((0, 1); \R)$. The controls $a$, $w$, and $b$ were constructed \emph{piecewise-constant in time} (with an explicit bound on the number of jumps). More recently, a class of so-called ``semi-autonomous'' neural velocities have also been considered in \cite{li2024}.

\begin{comment}
\cref{prob:2}, on the other hand, is somewhat reminiscent of the \emph{dynamic formulation} of the optimal transport problem with quadratic cost introduced by Benamou and Brenier in \cite{MR1738163}, where the (non-autonomous) vector field is obtained from the minimization of the \emph{kinetic energy}: 
\begin{align}\label{eq:kinetic-bb}
\min _{(\nu, v)} \int_{[0,1]}\int_{\mathbb{R}^d}\left|v_t(x)\right|^2 \, \mathrm{d} \nu_t(x)\, \mathrm{d} t,
\end{align}
where $(\nu,v) = \left(\nu_t, v_t\right)_{t \in[0,1]}$ are \emph{admissible flow plans}, \ie, $\nu_t \in C^0([0,1]; \mathcal{P}(\R^d))$
is a weakly-continuous curve of measures (with $\nu_0 \coloneqq \mu_0$ and $\nu_1 \coloneqq \mu_1$),  $v_t$ is a \emph{time-dependent} Borel vector field on $\R^d$, and they satisfy the continuity equation
\[
\partial_t \nu_t +\operatorname{div}_x\left(v_t\, \nu_t \right)=0, \qquad t \in [0,1], \ x \in \R^d,
\]
in the sense of distributions (see \cite[Chapter 8]{MR2401600} and \cite{zbMATH06110557} for further details). We stress, however, that, for \cref{prob:1} and \cref{prob:1bis}, we search for a \emph{time-independent} $v$ and are not concerned with minimizing the kinetic energy \cref{eq:kinetic-bb}. 
\end{comment}

The problem of identifying if a given map can be ``embedded in a flow'' has a long history in the dynamical system community (see, \eg,  \cite{zbMATH07298491,zbMATH03113311} and references therein).  Moreover, the study of inverse problems for some ODEs (namely, reconstructing a vector field from the time-$t_i$ map of the flow for some $\{t_i\}_{i \in \{1, \dots, N\}}$) has been considered in \cite{zbMATH07239837} (and references therein). More recently, the same question was addressed in \cite[Sections 3.1 \& 3.2]{2308.01213} under the additional ansatz that $v$ is of neural type.

\section{Construction in the one-dimensional case}
\label{sec:1d}

If $d=1$,   \cref{eq:ode} reduces to 
\begin{align}\label{eq:ode-1d}
    \begin{cases}
    \pt \phi(t,x) = v(\phi(t,x)), & t >0, \ x \in \R, \\
    \phi(0,x) = x, & x \in \R.
    \end{cases}
\end{align}
We start by remarking that, if the flow is unique (and defined up to time $t=1$), then the map $\R \ni x \mapsto \phi(1,x)$ is  non-decreasing (see \cite[Section 8, XI. Theorem, p. 69]{MR271508})\footnote{~This monotonicity statement is true also for non-autonomous velocities. Let $\phi$ be the unique solution to
\begin{align}\label{eq:cauchy-mono}
\begin{cases}
\partial_t \phi = V(t,\phi(t,x)), & t >0, \\
\phi(0,x) = x, & x \in \R,
\end{cases}
\end{align}
where $V:\R_+\times \R \to \R$. We claim that $x \mapsto \phi(1,x)$ is non-decreasing. 

Let us suppose, for the sake of finding a contradiction, that there exists $x_1 \le x_2$ such that $\phi(1,x_2)<\phi(1,x_1)$. Since $t \mapsto \phi(t,\cdot)$ is a continuous function, we can apply the intermediate-value theorem: 
$x_2 = \phi(0,x_2)>\phi(0,x_1)=x_1$ and $\phi(1,x_2) < \phi(1,x_1)$ imply that $\phi(\bar t, x_2) = \phi(\bar t, x_1) \eqqcolon \bar \phi$ for some $\bar t \in (0,1)$.
This means that $\phi(t,x_1)$ and $\phi(t,x_2)$ solve the Cauchy problem 
\begin{align}\label{eq:cauchy-2}
\begin{cases}
\partial_t \psi(t) = V(t,\psi(t))), & t >\bar t, \\
\psi(\bar t) = \bar \phi, & x \in \R. 
\end{cases}
\end{align} 
This yields a contradiction because the solution of \cref{eq:cauchy-mono} is unique. }. 
Therefore, if a velocity $v:\R \to \R$ exists such that the corresponding flow $\phi$ is unique and satisfies $\phi(1, \cdot)_\#\mu_0 = \mu_1$, then $\phi(1,\cdot)$ must coincide with the unique \emph{monotone transport map} between $\mu_0$ and $\mu_1$, which is optimal for \emph{Monge's optimal transport problem} 
\[
\mathrm{M}(\mu_0, \mu_1) \coloneqq     \min \left\{\int_{\R} \mathrm{c}(\T (x),x) \, \mathrm d \mu_0(x): \ \T : \mathbb{R} \rightarrow \mathbb{R} \text{ and } \mu_1=\T _{\#} \mu_0\right\},
\]
with cost $\mathrm{c}(x, y)\coloneqq |x-y|^p$ for some $p \geq 1$. Owing to the uniqueness of the monotone transport map, \cref{prob:1} and \cref{prob:2}  coincide in this setting.

For the sake of completeness, we recall some known properties of the one-dimensional optimal transport map in the following theorem (see~\cite[Theorem 3.1]{MR2011032} and \cite[Lemma 2.5]{zbMATH02228669}).

\begin{theorem}[One-dimensional Monge's problem]\label{th:ot1d}
Let   $\mu_0, \mu_1 \in \mathcal P(\R)$ and let us assume that $\mu_0$ is \emph{non-atomic} (\ie, a \emph{diffuse measure}: $\mu_0(\{x\})=0$ for any $x \in \R$). Then there exists a unique (modulo countable sets) non-decreasing function $\T  :{\supp  \mu_0} \to \R$ such that $\T _{\#} \mu_0 \equiv \mu_1$, given explicitly by 
\[
\T (x)= \sup \left\{z \in \mathbb R: \, \mu_1((-\infty, z]) \leq \mu_0((-\infty, x])\right\},\quad\text{for}\quad x \in {\supp \mu_0}.
\]
Moreover, the function $\T $ is an optimal transport map (the unique optimal
transport map if $p >1$) and, provided that $\supp  \mu_1$ is connected, it is continuous. Finally, if $\mu_0, \mu_1 \ll \Leb^1$ and their densities $\bar \mu_0$ and $\bar \mu_1$ are continuous and positive functions in their respective supports, then 
$\T $ is $C^1$ and its derivative is given by 
\begin{align}\label{eq:derivative-t}
    \T '(x) = \frac{\bar \mu_0(x)}{\bar \mu_1(\T (x))}.
\end{align}
\end{theorem}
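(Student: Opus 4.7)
The plan is to proceed via cumulative distribution functions. Set $F_0(x) \coloneqq \mu_0((-\infty,x])$ and $F_1(x) \coloneqq \mu_1((-\infty,x])$; since $\mu_0$ is non-atomic, $F_0$ is continuous on $\R$, while $F_1$ is merely right-continuous and non-decreasing. Writing $F_1^{-1}(u) \coloneqq \sup\{z\in\R : F_1(z)\le u\}$ for the upper generalized inverse, the formula in the statement reads $\T(x) = F_1^{-1}(F_0(x))$, and the monotonicity of $\T$ is immediate from that of $F_0$ and $F_1^{-1}$.

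First I would verify that $\T_\#\mu_0 \equiv \mu_1$ by checking on half-lines. Using the equivalence $F_1^{-1}(u) \le z \Leftrightarrow u \le F_1(z)$, which holds outside a countable exceptional set corresponding to plateaus of $F_1$, together with the continuity of $F_0$, one computes
\[
\mu_0(\T^{-1}((-\infty,z])) = \mu_0(\{x : F_0(x) \le F_1(z)\}) = F_1(z),
\]
the last equality being the standard fact that $(F_0)_\#\mu_0$ is the uniform measure on $[0,1]$ whenever $F_0$ is a continuous CDF. For uniqueness modulo countable sets, if $\tilde\T$ is another non-decreasing transport map from $\mu_0$ to $\mu_1$, the same bookkeeping forces $F_1(\tilde\T(x)) = F_0(x)$ at every continuity point of $\tilde\T$; since $F_1$ has at most countably many plateaus and $\tilde\T$ at most countably many jumps, this pins down $\tilde\T = \T$ off a countable set.

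Optimality for the cost $\mathrm{c}(x,y) = |x-y|^p$ with $p\ge 1$ then reduces to the classical observation that the graph of any non-decreasing map is $\mathrm{c}$-cyclically monotone whenever $\mathrm{c}(x,y) = h(x-y)$ with $h$ convex (a direct pairwise rearrangement inequality, equivalent to Kantorovich duality in this setting); strict convexity of $h$ when $p>1$ upgrades this to uniqueness of the optimizer. Continuity of $\T$ under the hypothesis that $\supp\mu_1$ is connected is obtained by contradiction: a jump of $\T$ at some $x_0 \in \supp\mu_0$ would produce an open interval $(\T(x_0^-),\T(x_0^+))$ not charged by $\mu_1$ but with endpoints in $\supp\mu_1$, contradicting connectedness.

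Finally, under the continuity and positivity assumptions on the densities, $F_0, F_1 \in C^1$ with $F_i' = \bar\mu_i > 0$ on the interior of $\supp\mu_i$, so $F_1$ is a $C^1$-diffeomorphism between $\supp\mu_1$ and a subinterval of $[0,1]$; combining $\T = F_1^{-1}\circ F_0$ with the inverse function theorem and the chain rule yields $\T \in C^1$ with the stated formula $\T'(x) = \bar\mu_0(x)/\bar\mu_1(\T(x))$. I expect the only genuinely subtle step in the plan to be the bookkeeping around plateaus of $F_1$, which correspond to gaps in $\supp\mu_1$ and are precisely the values of $u$ at which the formal inverse $F_1^{-1}$ is ambiguous; the non-atomicity of $\mu_0$ is exactly what removes the analogous difficulty on the $F_0$ side and makes the push-forward and uniqueness arguments go through cleanly.
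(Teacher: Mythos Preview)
The paper does not actually prove this theorem: it is stated as a recollection of known results, with citations to \cite[Theorem 3.1]{MR2011032} and \cite[Lemma 2.5]{zbMATH02228669}, and no proof is given in the paper itself. Your proposal is the standard CDF-based argument (essentially what those references do) and is correct; in particular, the identification $\T = F_1^{-1}\circ F_0$, the push-forward check via $(F_0)_\#\mu_0 = \mathrm{Unif}[0,1]$, the cyclical monotonicity for optimality, and the inverse function theorem computation of $\T'$ are all on target.
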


The main result of this section solves, in particular, \cref{prob:2} and, equivalently, \cref{prob:1}, for $d = 1$, and is the following: 

\begin{theorem}[Exact controllability, $d=1$]\label{th:main-1d}
    Let us consider $\mu_0, \mu_1 \in \mathcal{P}_{\mathrm{a.c.}}(\R)$ and suppose that their densities, $\bar \mu_0$ and $\bar \mu_1$, are continuous functions (in their respective supports). Let us suppose that the following conditions hold: 
    \begin{enumerate}[label=\textbf{M-\arabic*}] 
        \item\label{it:m1} $\supp \mu_0$ and $\supp \mu_1$ are convex; 
        \item\label{it:m3} $\bar\mu_0 > 0$ in ${\rm supp} (\mu_0)$ and $\bar \mu_1 > 0$ in ${\rm supp}(\mu_1)$; 
    \end{enumerate} 
    Then there exists a  velocity field $v: {{\rm Conv}(\supp \mu_0 \cup \supp \mu_1)} \to \R$  such that 
    \[
    \begin{aligned}
    |v|> 0  &\quad\text{in}  \quad {{\rm Conv}(\supp \mu_0 \cup \supp \mu_1)}\,\setminus \mathcal{S}, \\ 
    v \equiv 0 &\quad\text{in}\quad {\mathcal{S}},
   \end{aligned}
    \]
and 
\[
    \T (x) = \phi(1,x), \qquad x \in {\supp \mu_0},
    \]
    where  $\mathrm T$ is the monotone optimal transport map from \cref{th:ot1d},  $\mathcal S$ is the set of fixed points of the map $\T$ in ${\supp \mu_0}$, and $\phi$ is the unique solution of \cref{eq:ode-1d} for $x \in { \supp \mu_0 }$.
    
    Moreover, $v$ is continuous except possibly at $\partial \mathcal{S}$. If, additionally, $|\bar\mu_0- \bar \mu_1|> 0$ in $\partial\mathcal{S}$, then $v$ can be taken to be continuous also at $\partial \mathcal{S}$. 
    If,  furthermore, $\bar \mu_0$ and $\bar \mu_1$ are Lipschitz continuous, $v$ can be taken locally Lipschitz continuous up to $\partial \mathcal{S}$.
\end{theorem}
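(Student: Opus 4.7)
The strategy is to recast the condition $\phi(1,\cdot) \equiv \T$ as a first-order linear functional equation for the candidate velocity, solve it on each connected component of ${\rm supp}(\mu_0)\setminus \mathcal{S}$ by a free prescription on a ``fundamental domain'' extended by iteration of $\T$, and then glue across $\mathcal{S}$. On each component of ${\rm supp}(\mu_0)\setminus \mathcal{S}$ the vector field is to be sign-definite, so the change of variable $dt = ds/v(s)$ along a trajectory shows that $\phi(1,x) = \T(x)$ is equivalent to $\int_x^{\T(x)} ds/v(s) = 1$ for all $x$. Setting $w \coloneqq 1/v$ and differentiating in $x$ yields the pointwise equation
\[
w(\T(x))\,\T'(x) = w(x),
\]
and conversely any solution $w$ of this equation satisfies the integral identity once we fix the normalization $\int_{x_0}^{\T(x_0)} w = 1$ at one base point.

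Next I would reduce to a single component. As $\T$ is continuous and non-decreasing, $\mathcal{S} = \{x : \T(x)=x\}$ is relatively closed in ${\rm supp}(\mu_0)$ and its complement is a disjoint union of open intervals on each of which $\T(x)-x$ has constant sign. Fix one such $I = (a,b)$ with $\T(x) > x$ on $I$ (the opposite case being symmetric); then $\T$ is a monotone self-homeomorphism of $I$, and the closed intervals $\T^n([x_0, \T(x_0)])$, $n \in \mathbb{Z}$, tile $I$ with $\T^n(y)\to b$ and $\T^{-n}(y)\to a$ monotonically. Pick any continuous positive $w$ on the fundamental domain $[x_0, \T(x_0)]$ satisfying the two compatibility conditions
\[
w(\T(x_0))\,\T'(x_0) = w(x_0), \qquad \int_{x_0}^{\T(x_0)} w(s)\,ds = 1,
\]
and extend by $w(\T(x)) \coloneqq w(x)/\T'(x)$. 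The matching condition at the endpoints of the fundamental domain ensures continuity across every seam $\T^n(x_0)$, and setting $F(x) \coloneqq \int_{x_0}^x w$ the functional equation together with the normalization yields $F(\T(x)) - F(x) \equiv 1$; thus $v \coloneqq 1/w$ generates a flow with $\phi(1,\cdot) = \T$ on $I$.

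The remaining work is the analysis near $\partial\mathcal{S}$. By \cref{eq:derivative-t} we have $\T'(a) = \bar\mu_0(a)/\bar\mu_1(a) \geq 1$ and $\T'(b) \leq 1$, with strict inequality exactly when $\bar\mu_0 \neq \bar\mu_1$ at that endpoint. Iteration gives $w(\T^n(x_0)) = w(x_0)\prod_{k=0}^{n-1}\T'(\T^k(x_0))^{-1}$, so when $\T'(b) < 1$ we have $w \to +\infty$ geometrically and hence $v \to 0$ continuously as $x \to b^-$, matching the prescription $v(b) \coloneqq 0$; when $\T'(b) = 1$ the product need not diverge and a jump in $v$ is possible. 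In both regimes, however, $\int^b w\,ds = \sum_n \int_{\T^n(x_0)}^{\T^{n+1}(x_0)} w\,ds = \sum_n 1 = +\infty$, so by Osgood's criterion the flow cannot reach $\mathcal{S}$ from $I$ in finite time. This simultaneously delivers uniqueness of $\phi$ on ${\rm supp}(\mu_0)$ up to $t=1$ and the invariance of each component of ${\rm supp}(\mu_0)\setminus \mathcal{S}$ under the flow.

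For the Lipschitz refinement, Lipschitz densities give $\T \in C^{1,1}_{\loc}$ in the interior of its domain, so I would choose $w$ on $[x_0, \T(x_0)]$ to be $C^1$ and match also the one-sided derivatives at the two endpoints (a finite, solvable set of linear constraints beyond the zero-order matching), making the iterated extension $C^1$ on $I$ and hence locally Lipschitz; under $\bar\mu_0 \neq \bar\mu_1$ on $\partial\mathcal{S}$ the strict inequalities $\T'(a) > 1 > \T'(b)$ force geometric decay of both $v$ and its derivative along the orbits, producing a locally Lipschitz extension up to $\partial\mathcal{S}$. The main obstacle throughout is the boundary analysis at $\partial\mathcal{S}$: one must simultaneously arrange that $v$ extends with the claimed regularity, that $1/v$ remains non-integrable so trajectories cannot exit their component, and that the free prescription on the fundamental domain is compatible with both requirements — the case $\T'(b) = 1$ being exactly where continuity at $\partial\mathcal{S}$ may fail, as the statement anticipates.
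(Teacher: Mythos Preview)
Your strategy coincides with the paper's: the functional equation you derive is the Acz\'el--Jabotinsky--Julia equation $v(\T(x))=\T'(x)\,v(x)$ (you work with $w=1/v$, which is equivalent), solved on each component of $\supp\mu_0\setminus\mathcal{S}$ by prescribing freely on a fundamental domain and iterating, with uniqueness of the flow coming from the Osgood-type divergence $\sum_n\int_{\T^n(x_0)}^{\T^{n+1}(x_0)}w=\sum_n 1=+\infty$.

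There is one genuine oversight. You assert that on each component $I=(a,b)$ the map $\T$ is a self-homeomorphism of $I$ and that the iterates $\T^n([x_0,\T(x_0)])$, $n\in\mathbb{Z}$, tile $I$ with $\T^n\to b$ and $\T^{-n}\to a$. This holds only when \emph{both} $a,b\in\mathcal{S}$. If, say, $b=\sup(\supp\mu_0)\notin\mathcal{S}$ (so that $\T(b)>b$ is the right endpoint of $\supp\mu_1$), then forward iterates leave $\supp\mu_0$ after finitely many steps, the limit $\T^n(y)\to b$ is false, and $v$ must still be defined on $(b,\T(b)]\subset{\rm Conv}(\supp\mu_0\cup\supp\mu_1)\setminus\supp\mu_0$. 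The paper handles this by separating out the cases of zero or one fixed point (\cref{lem:S_empty,lem:S_1} and their unbounded analogues), where the iteration terminates on the free side after finitely many steps and no asymptotic analysis is needed there; only the doubly-pinned component (\cref{lem:S_2}) matches your description. Your argument goes through once this easier boundary case is added. A smaller imprecision: in the Lipschitz refinement you claim geometric decay of $v'$ along orbits, but differentiating the functional equation gives $v'(\T^i(x))=v'(x)+v(x)\sum_{j<i}\T''(\T^j(x))P_j(x)/\T'(\T^j(x))$ with $P_j=\prod_{k<j}\T'(\T^k(x))$, so $v'$ is only \emph{bounded} (by a convergent series when $\T'(\bar x)<1$); boundedness of $v'$ together with $v\to 0$ is what yields the Lipschitz extension to $\partial\mathcal{S}$.
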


\begin{remark}[Continuity equation]
In \cref{th:main-1d}, we claim the uniqueness of $\phi$. On the other hand, for the continuity equation \cref{eq:tr}, we cannot make any claim, as the velocity field will not, in general, be $L^1_{\mathrm{loc}}$ (and therefore, things would depend on the notion of solution chosen in such a case; \cf \cref{rmk:prob2}); see \cref{lm:ct} below.  

If we are in a situation where $v$ is continuous, we can define weak solutions for \cref{eq:tr} and their uniqueness follows from the uniqueness for the ODE, \cref{eq:ode-1d}, see \cite[Proposition 5.2]{MR4002209}. 

If $v$ is also Lipschitz continuous, then uniqueness for \cref{eq:ode-1d} follows from the Cauchy--Lipschitz theorem and uniqueness for \cref{eq:tr} follows from the classical methods of characteristics. 

\end{remark}

\begin{remark}[Higher regularity]
  If, in addition, $\bar\mu_0\in C^k({\supp\mu_0})$ and $\bar \mu_1\in C^k({\supp\mu_1})$ for some $k\in \N$, then we can choose $v\in C^k({\mathrm{Conv}(\supp \mu_0 \cup \supp \mu_1)}\setminus \partial S)$.   This is a consequence of  \cref{cor:Ck}, stated below. 
\end{remark}

\begin{remark}[Support of $\mu_0$ with two connected components]\label{rmk:supp2}
    If $\supp \mu_0 \equiv C_1 \cup C_2$ with $C_1$ and $C_2$ disjoint connected sets satisfying
    $\sup C_1 < \inf C_2$, then we can still solve the problem by splitting it in two. We consider $\mu^1_0 \coloneqq \mu_0 \Restr{C_1}$, $\mu_0^2 \coloneqq \mu_0\Restr{C_2}$ and $\mu_1^1 \coloneqq \mu_1 \Restr{S_1}$, $\mu_1^2 \coloneqq \mu_1 \Restr{S_2}$ where $S_1$ and $S_2$ are connected sets satisfying $S_1 \cup S_2 = \supp \mu_1$, $\mu_1(S_1) = \mu_0(C_1)$, and $\mu_1(S_2) = \mu_0(C_2)$.   Similar strategies can be applied with more interleaved connected components, provided that the corresponding masses allow for it. 
\end{remark}

The key idea to approach the proof of \cref{th:main-1d} is as follows. If $v \in  C \cap L^\infty$ and $|v| > 0$, by \cite[Theorem 1.2.6]{MR1336820} (\cf also \cite{MR3849077}), there exists one and only one\footnote{~A (local-in-time) solution $\phi$ of \cref{eq:ode-1d} must exist, if  $v$ is continuous, by Peano's theorem; moreover, if $v$ is bounded, it can be extended globally-in-time. Let us sketch the proof of uniqueness. The function  $G(\psi)\coloneqq \int_{x}^\psi \frac{\d \xi }{v(\xi)}$ is of class $C^1$ and $G' \neq 0$; hence $G$ has a $C^1$ inverse. We then compute
\[
\partial_t G(\phi(t,x))=\frac{\partial_t \phi(t,x)}{v(\phi(t,x))}=1,
\]
which  yields $G(\phi(t,x))=t$ and thus $\phi(t,x)=G^{-1}(t)$ and uniqueness follows. 
} solution of \cref{eq:ode-1d} in the following sense: 
\begin{align}  \label{eq:sol-auto}
\begin{aligned}
    &\phi(\cdot, x) \in C^1((0, +\infty)) && \text{ for every $x \in \R$}, \\ 
&\int_{x}^{\phi(t,\,x)} \frac{1}{v(\xi)} \, \mathrm{d} \xi= t, && t >0.
\end{aligned}
\end{align}
Let us suppose that $\phi(1,\cdot) \equiv \T$. Then, owing to \cref{eq:sol-auto}, 
\begin{equation}
\label{eq:int_inv_v}
\int_x^{\T(x)} \frac{\d \xi}{v(\xi)} = 1.
\end{equation}
That is, a primitive of $1/v$ (\ie, $F$ such that $F'= 1/v$) solves \emph{Abel's functional equation}\footnote{~The functional equation \cref{eq:abel} is the \emph{equation of semi-conjugacy of $\T$ with the standard shift}; see \cite[Section 2.2.2, p. 16]{MR1994638}.} (introduced in \cite{abel1881}):
  \begin{align}\label{eq:abel}
     F(\T (x)) = F(x) + 1, \quad x \in {\supp \mu_0}.
 \end{align} 
Differentiating \cref{eq:int_inv_v} with respect to $x$ yields \emph{Acz\'el--Jabotinsky--Julia's equation} (introduced in \cite{zbMATH02611173,zbMATH03059802,zbMATH03184158}\footnote{~In the language of Jabotinsky and Acz\'el, the function $\T$ is the unknown and $v$ is given. On the other hand, in our setting, $\T$ is given and $v$ is the unknown.};  see also \cite{zbMATH06579010}):
 \begin{align}\label{eq:jj}
   v(\T (x)) =   \T '(x) \,  v(x), \quad x \in {\supp \mu_0}.
 \end{align}
 
 Viceversa, a solution $v \in  C \cap L^\infty$, with $|v|>0$, of \cref{eq:jj} generates a unique flow $\phi$ that satisfies $\phi(1,\cdot) \equiv \T$ (up to a scaling constant to achieve $\T$ at $t=1$). 

Therefore, to prove \cref{th:main-1d}, we will build a suitable solution $v$ to Acz\'el--Jabotinsky--Julia's equation \cref{eq:jj}, which is more convenient than \cref{eq:abel} for our purposes.

\subsection{Measures with bounded supports}
\label{ssec:proof-1d}

To solve \cref{eq:jj}, we distinguish various cases, according to the number of fixed points, denoted $\mathcal{S}$, of the optimal map $\T$ between $\mu_0$ and $\mu_1$, and the boundedness of the supports of the measures $\mu_0$ and $\mu_1$. 

In this section, we deal with compactly supported measures, so we will add the following condition: 
    \begin{enumerate}[label=\textbf{M-\arabic*}] 
     \setcounter{enumi}{2}
        \item \label{it:m4} $\supp \mu_0$ and $\supp \mu_1$ are compact.
        \end{enumerate}
        In particular, we will denote $\lambda, \Lambda>0$ the two constants such that 
\begin{equation}
\label{eq:LL}
0 < \lambda <\bar \mu_0, \, \bar \mu_1 < \Lambda < + \infty \quad \text{ in their respective supports} 
\end{equation}
(which exist, by compactness of the supports and continuity of the densities). 

We start with the case without fixed points. 

\begin{lemma}[Transport map without fixed points]
\label{lem:S_empty}
    Let us consider $\mu_0, \mu_1 \in \mathcal{P}_{\mathrm{a.c.}}(\R)$ and suppose that their densities, $\bar \mu_0$ and $\bar \mu_1$, are continuous functions (in their respective supports). Let us suppose that the conditions \ref{it:m1}--\ref{it:m3}--\ref{it:m4} hold (with \eqref{eq:LL}) and, moreover, that the map $\T$ has no fixed points, $\mathcal{S} = \emptyset$. 

    Then there exists   a continuous velocity field $v: {{\rm Conv}(\supp \mu_0 \cup \supp \mu_1)} \to \R$  such that 
    \[
|v|>0 \quad \text{in}  \quad {{\rm Conv}(\supp \mu_0 \cup \supp \mu_1)},\qquad     \T (x) = \phi(1,x),\quad\text{for}\quad   x \in {\supp \mu_0},
    \]
    where $\phi$ is the unique solution of \cref{eq:ode-1d} for $x \in { \supp \mu_0}$.
\end{lemma}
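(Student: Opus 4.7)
My plan is to reduce the construction of $v$ to finding a positive continuous solution of the Acz\'el--Jabotinsky--Julia equation \cref{eq:jj} on $\mathrm{Conv}(\supp\mu_0\cup\supp\mu_1)$ satisfying the normalization $\int_x^{\T(x)}\d\xi/v(\xi) = 1$ for $x \in \supp\mu_0$; by the characterization \cref{eq:sol-auto} of the (unique) flow of a continuous non-vanishing velocity, these two properties together yield $\phi(1,x) = \T(x)$. Writing $\supp\mu_0 = [a_0,b_0]$ and $\supp\mu_1 = [a_1,b_1]$, since $\T$ is continuous on the interval $[a_0,b_0]$ with no fixed point, $\T-\Id$ has constant sign. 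I treat the case $\T>\Id$ (the opposite is symmetric, with $v$ negative); then $a_1>a_0$, $b_1>b_0$, $\mathrm{Conv}(\supp\mu_0\cup\supp\mu_1) = [a_0,b_1]$, and $\delta \coloneqq \min_{[a_0,b_0]}(\T-\Id) > 0$ by compactness, which forces any $\T$-orbit to exit $[a_0,b_0]$ after finitely many steps.

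I would then prescribe $v$ freely on the ``fundamental interval'' $[a_0,a_1]$ as any positive continuous function satisfying the two compatibility conditions
\[
v(a_1) = \T'(a_0)\,v(a_0) \qquad \text{and} \qquad \int_{a_0}^{a_1}\frac{\d\xi}{v(\xi)} = 1.
\]
The first is the boundary-matching condition that will make the extension continuous at $a_1$; the second is the normalization that will make the flow reach $\T(x)$ at time exactly $1$. Both are satisfiable: choose any $v(a_0)>0$ (noting $\T'(a_0)>0$ by \cref{eq:derivative-t} and \ref{it:m3}), take any positive continuous interpolant, and then adjust the value of the integral by a global rescaling $v\mapsto c\,v$, which preserves the boundary condition and multiplies the integral by $1/c$.

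Next, for $y \in [a_1,b_1]$, I \textit{define}
\[
v(y) \coloneqq \T'(\T^{-1}(y))\,v(\T^{-1}(y)),
\]
where $\T^{-1}\colon [a_1,b_1] \to [a_0,b_0]$ is the continuous inverse provided by \cref{th:ot1d}. If the supports are disjoint or abutting ($a_1\geq b_0$), then $\T^{-1}([a_1,b_1]) = [a_0,b_0] \subset [a_0,a_1]$ and the extension uses only values already prescribed. Otherwise, I iterate: setting $x_0 \coloneqq a_0$ and $x_{k+1}\coloneqq\T(x_k)$ as long as $x_k \in [a_0,b_0]$, the bound $x_{k+1}-x_k\geq\delta$ terminates the iteration at some $x_N \in (b_0,b_1]$, and the same extension formula, applied successively on $[x_1,x_2], [x_2,x_3], \ldots, [x_{N-1},x_N]$ (each step using values defined in the preceding one) together with a final application on $[x_N,b_1]$ via $\T^{-1}([x_N,b_1])\subset [x_{N-1},x_N]$, completes the definition of $v$ on $[a_0,b_1]$.

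Finally, I would verify that this $v$ is continuous, strictly positive, and satisfies the unit Abel integral on $\supp\mu_0$. Positivity follows from $\T'>0$ and positivity of the seed; continuity at each junction $x_k$ is automatic because the two one-sided limits both evaluate to $\T'(x_{k-1})\,v(x_{k-1})$. For the integral, differentiating $\Phi(x)\coloneqq\int_x^{\T(x)}\d\xi/v(\xi)$ and using \cref{eq:jj} gives $\Phi'(x) = \T'(x)/v(\T(x)) - 1/v(x) = 0$, so $\Phi$ is constant on $[a_0,b_0]$, and the normalization forces $\Phi\equiv 1$. Uniqueness of $\phi$ and the identity $\phi(1,\cdot) = \T$ on $\supp\mu_0$ then follow from the theory for continuous non-vanishing velocities recalled in \cref{eq:sol-auto}. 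The principal obstacle in this plan is the \emph{joint} realizability of the boundary-matching and normalization conditions on the fundamental interval while keeping $v$ strictly positive and continuous; hypothesis \ref{it:m3}, by bounding $\T'$ away from $0$ and $\infty$, is precisely what makes this jointly achievable.
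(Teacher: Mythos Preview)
Your proposal is correct and follows essentially the same approach as the paper: prescribe $v$ on the fundamental interval $[a_0,\T(a_0)]$ subject to the matching condition $v(a_1)=\T'(a_0)v(a_0)$, extend recursively via $v(\T(x))=\T'(x)v(x)$ along the finite $\T$-orbit (finiteness coming from compactness and the absence of fixed points), and finally rescale to achieve the unit travel-time normalization. Your treatment is in fact slightly more streamlined than the paper's---you handle the disjoint and overlapping support cases uniformly rather than splitting them, and you make the verification $\Phi'\equiv 0$ explicit where the paper just invokes the rescaling---but the underlying argument is the same.
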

\begin{proof}
Let us denote 
\[
M_0 \coloneqq \supp \mu_0 = [a_0, b_0], \qquad M_1 \coloneqq \supp \mu_1 = [a_1, b_1].
\]
By previous considerations, such a velocity field $v$ exists if and only if it satisfies  \cref{eq:jj} (up to a multiplicative constant fixing the travel time). Recall that, from \cref{th:ot1d}, we already know that, for any $x\in M_0$,
\[
 \T'(x) = \frac{\bar\mu_0(x)}{\bar\mu_1(\T(x))},\quad\text{for}\quad x\in M_0,
\]
and in particular, by continuity of $\bar\mu_0$ and $\bar\mu_1$ (and boundedness away from zero for $\mu_1$), $\T'$ is continuous and $\T$ is  $C^1$. Moreover, by assumption \cref{it:m3} (recall \eqref{eq:LL}), 
\[
\frac{\Lambda}{\lambda}\ge \T'(x)\ge \frac{\lambda}{\Lambda}>0,\quad\text{for}\quad x\in M_0.
\]

We will split the proof into two cases. 

\textbf{Case 1:  ${M_0}\cap{M_1} = \emptyset$.} In this case, we can fix   $v \equiv  1$  in ${M_0}$, so that $v$ in ${M_1}$ is given by 
\[
v(x) \coloneqq \T'(\T^{-1}(x)) v(\T^{-1}(x)) = \T'(\T^{-1}(x)) \in \left[ \frac{\lambda}{\Lambda}, \frac{\Lambda}{\lambda}\right], \quad\text{for}\quad x\in{ M_1}. 
\]
In particular, $v$ can be chosen continuous and with $v(x)\in \left[\frac{\lambda}{\Lambda}, \frac{\Lambda}{\lambda}\right]$ for $x\in {\rm Conv}({M_0\cup M_1})$ to satisfy \cref{eq:jj}. That is, \cref{eq:int_inv_v} holds with a constant non-zero right-hand side. Up to multiplying by a constant to fix the transport time, we get the desired result. 

\textbf{Case 2:  ${M_0}\cap{M_1} \neq \emptyset$.} Let also assume, without loss of generality, that $a_0 < a_1$   (and therefore, $\T(x) > x$ for $x\in {M_0}$). Indeed, since $\T$ does not have fixed points, we already know that $a_0 \neq a_1$. Furthermore, if we had $a_1 < a_0$, we could swap the roles of $\mu_0$ and $\mu_1$ and consider the vector field $-v$ instead. 

Now we define $\alpha_0 \coloneqq a_0$, $\alpha_1 \coloneqq a_1 = \T(a_0) = \T(\alpha_0)$, and $\alpha_i \coloneqq \T(\alpha_{i-1})$ for $i=1,2,\dots$. Then, there exists $N \in \N$ such that $\alpha_N \in (b_0, b_1]$. Indeed, $i \mapsto \alpha_i$ is increasing (owing to the monotonicity of $\T$) and, if $\alpha_i \le b_0$, $\alpha_{i+1}\le b_1$. If the sequence $\{\alpha_i\}_{i}$ had an accumulation point $\bar{\alpha} \le b_0$, then $\T(\bar{\alpha}) = \bar{\alpha}$ and ${\alpha}$ is a fixed point for $\T$, which do not exist by assumption. Hence, the sequence must be finite. 

Let us now fix $v \in \left[\frac{\lambda}{\Lambda}, \frac{\Lambda}{\lambda}\right]$ to be any smooth function in $[a_0, a_1]$ with 
\begin{equation}
\label{eq:recurs_v0}
v(a_1) = \T'(a_0) v(a_0) = \frac{\bar\mu_0(a_0)}{\bar\mu_1(a_1)} v(a_0). 
\end{equation}
We then define, recursively, and denoting $\alpha_{N+1} \coloneqq b_1$,
\begin{equation}
\label{eq:recurs_v}
v(\T(x)) = \T'(x) v(x), \quad\text{for}\quad x\in [\alpha_i, \alpha_{i+1}],\ \  i = 0,1,\dots, N.
\end{equation}
This defines $v$ in the interval $[a_0, b_1]$ in a continuous way. Indeed, $v$ is continuous in $[\alpha_0, \alpha_1]$ and in $[\alpha_1, \alpha_2]$, and, owing to \crefrange{eq:recurs_v0}{eq:recurs_v}, is also continuous at $\alpha_1$ from both sides, thus being continuous in $[\alpha_0, \alpha_2]$. Then, 
\[
v(x) = \T'(\T^{-1}(x)) v(\T^{-1}(x)), \quad\text{for}\quad x\in [\alpha_1, \alpha_3]. 
\]
Since $\T^{-1}([\alpha_1, \alpha_3]) = [\alpha_0, \alpha_2]$, and $v$ is continuous in $[\alpha_0, \alpha_2]$, $\T^{-1}$ is continuous, and $\T'$ is continuous, we obtain that $v$ is continuous in $[\alpha_1, \alpha_3]$ as well, and thus, in $[\alpha_0, \alpha_3]$. Proceeding iteratively, it is continuous in the whole interval $[a_0, b_1]$  with a bound $v \le \left[\frac{\Lambda}{\lambda}\right]^{N+1}$. Up to multiplying by a constant to fix the time of transport as before, we get the desired result. 
\end{proof}

In the previous proof, higher regularity of $\bar \mu_0$ and $\bar\mu_1$ gives higher regularity for the velocity field $v$.

\begin{corollary}[Higher regularity]
\label{cor:Ck}
In the setting of \cref{lem:S_empty}, if, in addition, $\bar\mu_0\in C^k({\supp\mu_0})$ and $\bar \mu_1\in C^k({\supp\mu_1})$ for some $k\in \N$, then we can choose $v\in C^k({{\rm Conv}(\supp \mu_0 \cup \supp \mu_1)})$.
\end{corollary}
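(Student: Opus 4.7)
My plan is to revisit the construction in the proof of \cref{lem:S_empty} step by step and track the regularity class of every object at each stage. The starting observation is that the formula $\T'(x) = \bar\mu_0(x)/\bar\mu_1(\T(x))$ from \cref{th:ot1d}, combined with $\bar\mu_1 > 0$, yields $\T \in C^{k+1}$ by a standard bootstrap: the a priori regularity $\T \in C^1$ makes the right-hand side of class $C^1$ (for $k \ge 1$), hence $\T \in C^2$, and iterating gives $\T \in C^{k+1}$. Since $\T' \ge \lambda/\Lambda > 0$, the inverse $\T^{-1}$ is $C^{k+1}$ as well.

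In Case 1 (disjoint supports) the construction sets $v \equiv 1$ on $M_0$ and $v = \T' \circ \T^{-1}$ on $M_1$; the latter is a composition of a $C^k$ with a $C^{k+1}$ function, hence $C^k$. To bridge the gap $(b_0, a_1)$ I would glue the two pieces with any $C^\infty$ interpolant matching the trivial $k$-jet at $b_0$ and the computed $k$-jet of $\T' \circ \T^{-1}$ at $a_1$, while staying bounded away from zero; such an interpolant exists by standard Hermite interpolation on a compact interval.

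The main work, and the step I expect to be the main obstacle, is Case 2. The free choice of $v$ on $[\alpha_0,\alpha_1]$ must be refined beyond the $0$-jet matching \cref{eq:recurs_v0}. Differentiating the recursion \cref{eq:recurs_v} $j$ times for $j = 0,1,\dots,k$ (permissible because $\T \in C^{k+1}$) expresses the $k$-jet of $v$ at $\alpha_1^+$ as an explicit linear function of the $k$-jet of $v$ at $\alpha_0^+$, with coefficients that are polynomial in $\T'(\alpha_0),\dots,\T^{(k+1)}(\alpha_0)$. For any strictly positive $v(\alpha_0)$ and any arbitrary $v'(\alpha_0),\dots,v^{(k)}(\alpha_0)$, the required $k$-jet of $v$ at $\alpha_1^-$ is therefore completely determined, and a $C^k$ (in fact $C^\infty$) function on $[\alpha_0,\alpha_1]$ realizing these two prescribed endpoint $k$-jets and remaining positive throughout is produced by Hermite interpolation, possibly followed by the addition of a $C^\infty$ bump compactly supported in $(\alpha_0,\alpha_1)$ and vanishing to all orders at the endpoints.

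With this refined initial profile, the recursion \cref{eq:recurs_v} is a linear operation with $C^k$ coefficients, hence preserves $C^k$ regularity on each interval $[\alpha_i,\alpha_{i+1}]$; and differentiating it $j$ times shows that $C^k$ matching at $\alpha_i$ propagates to $C^k$ matching at $\alpha_{i+1}$. Iterating through the finite chain $\alpha_1,\dots,\alpha_N$ delivers $v \in C^k({\rm Conv}(\supp \mu_0 \cup \supp \mu_1))$, as required.
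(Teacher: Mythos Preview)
Your proposal is correct and follows essentially the same approach as the paper: observe that $\T\in C^{k+1}$ via the formula \cref{eq:derivative-t}, then choose the initial profile of $v$ on $[\alpha_0,\alpha_1]$ so that its $k$-jet at $\alpha_1^-$ matches the $k$-jet of $\T'(\cdot)\,v(\cdot)$ at $\alpha_0^+$, and propagate through the finite recursion. Your version is in fact more explicit than the paper's (you spell out the bootstrap for $\T\in C^{k+1}$, treat Case~1 separately, and invoke Hermite interpolation), but the underlying argument is the same.
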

\begin{proof}
Recalling \cref{eq:derivative-t}, we note that  $\T'\in C^k({M_0})$ if $\bar\mu_0\in C^k({\supp\mu_0})$ and $\bar \mu_1\in C^k({\supp\mu_1})$. As a consequence, we deduce $\T\in C^{k+1}({M_0})$. Then, to conclude the proof, we just need to choose $v$ in $[a_0, a_1]$ as in the proof of \cref{lem:S_empty}, but such that 
\[
\frac{\d}{\d x^i}\bigg|_{x = a_1^-} v(x) = \frac{\d}{\d x^i}\bigg|_{x = a_0^+} (\T'(x) v(x)),
\]
 for all $i= 0,\dots,k$. Repeating the reasoning at the end of the proof of \cref{lem:S_empty}, we are done. 
\end{proof}

We now turn to the other cases, in which there can be fixed points. We start by assuming that $\T$ has exactly one fixed point.

\begin{lemma}[Transport map with exactly one fixed point]
\label{lem:S_1}
    Let us consider $\mu_0, \mu_1 \in \mathcal{P}_{\mathrm{a.c.}}(\R)$ and suppose that their densities, $\bar \mu_0$ and $\bar \mu_1$, are continuous functions (in their respective supports). Let us assume that the conditions \ref{it:m1}--\ref{it:m3}--\ref{it:m4}  hold (with \eqref{eq:LL}). Suppose, moreover, that the set $\mathcal{S}$ contains a single point, $\bar x$. 

    Then there exists   a  velocity field $v: {  \supp \mu_0 \cup \supp \mu_1 } \to \R$  such that 
   \[
    \begin{aligned}
    &|v|> 0  &&\text{in}  \quad {\supp \mu_0 \cup \supp \mu_1}\,\setminus \{\bar x\}, \\ 
    &v(\bar x) = 0, && {}
   \end{aligned}
    \]
and 
\[
    \T (x) = \phi(1,x), \quad x \in {\supp \mu_0},
    \]
    where $\phi$ is the unique solution of \cref{eq:ode-1d} for $x \in {\supp \mu_0}$, and $v$ is continuous except possibly at $\bar x$. If, moreover, $\bar\mu_0(\bar x) \neq \bar \mu_1(\bar x)$, then $v$ can be taken to be continuous also at $\bar x$. If, furthermore, $\bar \mu_0$ and $\bar \mu_1$ are Lipschitz continuous, then $v$ can be taken Lipschitz continuous up to $\bar x$ as well.  
\end{lemma}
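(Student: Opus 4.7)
The plan is to split the problem at $\bar x$ and apply the iterative strategy of \cref{lem:S_empty} on each side. Since $\T$ is monotone and continuous with $\T(\bar x)=\bar x$, it restricts to a monotone map from $\supp \mu_0 \cap (-\infty, \bar x]$ to $\supp \mu_1 \cap (-\infty, \bar x]$ (and analogously on the right), with no interior fixed points on either side. The one-sided problem can therefore be attacked by the same recursion $v(\T(x)) = \T'(x)\, v(x)$, the crucial difference being that the iterates $\alpha_i \coloneqq \T^i(\alpha_0)$ now converge to $\bar x$ instead of exhausting the opposite endpoint in finitely many steps.

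Fix attention on the left side and assume, say, $\T(x) > x$ there (the attracting case; the repelling case is symmetric upon replacing $\T$ by $\T^{-1}$, and the right side is treated analogously). Set $\alpha_0 \coloneqq a_0$, prescribe $v$ on the fundamental interval $[\alpha_0, \alpha_1] = [a_0, \T(a_0)]$ as a smooth positive function satisfying the compatibility $v(\alpha_1) = \T'(\alpha_0)\, v(\alpha_0)$, extend it to $[\alpha_0, \bar x)$ by the recursion, and set $v(\bar x) \coloneqq 0$. Iterating gives $v(\alpha_n) = v(\alpha_0)\prod_{i=0}^{n-1} \T'(\alpha_i)$, and since $\T'(\alpha_i) \to \T'(\bar x) = \bar\mu_0(\bar x)/\bar\mu_1(\bar x)$, when $\bar\mu_0(\bar x) < \bar\mu_1(\bar x)$ the sequence $v(\alpha_n)$ decays geometrically; uniform decay of $v$ on $[\alpha_n, \alpha_{n+1}]$ follows from the boundedness of $\T'$, so $v$ is continuous on $[a_0, \bar x)$ and vanishes at $\bar x$ from the left.

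The main obstacle lies in the fine analysis at $\bar x$. Continuity at $\bar x$ follows from the geometric decay just described, and Lipschitz regularity, under the extra Lipschitz assumption on the densities, comes from linearizing $\T(x) \approx \bar x + \T'(\bar x)(x - \bar x)$ and propagating the bound $|v(x)| \lesssim |x - \bar x|$ along the orbit via the recursion; this is exactly where the hypothesis $\T'(\bar x) \neq 1$, i.e., $\bar\mu_0(\bar x) \neq \bar\mu_1(\bar x)$, is used. In the degenerate case $\T'(\bar x) = 1$, the decay is only subgeometric and $v$ may fail to be continuous at $\bar x$, consistently with the statement. Uniqueness of the flow up to time $t=1$ then splits in two: away from $\bar x$, the argument of \cref{lem:S_empty} applies directly, while near $\bar x$ the estimate $|v(x)| \lesssim |x - \bar x|$ ensures $\int^{\bar x}\frac{\d\xi}{|v(\xi)|} = +\infty$, so no trajectory starting at $x \neq \bar x$ reaches $\bar x$ in finite time. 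A final constant rescaling of $v$ enforces $\int_x^{\T(x)} \frac{\d\xi}{v(\xi)} = 1$ (this integral being independent of $x$ by direct differentiation, which uses exactly the recursion), yielding $\phi(1, x) = \T(x)$ for $x \in \supp \mu_0$.
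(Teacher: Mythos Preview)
Your overall strategy matches the paper's: reduce to a one-sided problem, fix $v$ on a fundamental interval, extend by the recursion $v(\T(x)) = \T'(x)\,v(x)$, and analyze the accumulation at $\bar x$. The continuity argument via geometric decay of the products $\prod_j \T'(\T^j(x))$ when $\T'(\bar x) < 1$ is exactly what the paper does.

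There is, however, a genuine gap in your uniqueness argument. You derive $\int^{\bar x}\frac{\d\xi}{|v(\xi)|} = +\infty$ from the bound $|v(x)| \lesssim |x - \bar x|$, but that bound is only available under the \emph{extra} hypotheses ($\bar\mu_0(\bar x)\neq\bar\mu_1(\bar x)$ and Lipschitz densities). The lemma claims uniqueness of the flow in \emph{all} cases, including the degenerate one $\T'(\bar x)=1$, where $v$ may even blow up at $\bar x$ (cf.\ \cref{lm:ct}). The paper's argument avoids this entirely: since $\int_{\T^k(x_0)}^{\T^{k+1}(x_0)} \frac{\d\xi}{|v(\xi)|} = 1$ for every $k$ (this is built into the construction), summing over $k$ gives $\int_{\bar x - \eps}^{\bar x}\frac{\d\xi}{|v(\xi)|} = +\infty$ regardless of how $v$ behaves pointwise near $\bar x$. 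This Osgood-type divergence is what you need, and it comes for free from the functional equation rather than from any size estimate on $v$.

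A secondary issue: the bound $|v(x)|\lesssim |x-\bar x|$ gives Lipschitz continuity of $v$ \emph{at} $\bar x$, not a uniform Lipschitz bound on a one-sided neighborhood (think of $(x-\bar x)\sin(1/(x-\bar x))$). The paper instead differentiates the product formula $v(\T^i(x)) = v(x)\prod_{j=0}^{i-1}\T'(\T^j(x))$ to bound $\|v'\|_{L^\infty((\alpha_i,\alpha_{i+1}))}$ directly, using that $\T''$ is bounded when the densities are Lipschitz; this yields a uniform derivative bound up to $\bar x$.
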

\begin{proof}
We use the same notation as in the proof of \cref{lem:S_empty}. Let $\bar x$ be the unique fixed point for $\T$, and let us assume, without loss of generality,  that $\bar x = b_0 = b_1$ (otherwise, we can consider the restrictions of $\mu_0$ and $\mu_1$ to the intervals $(a_0, \bar x)$ and $(b_0, \bar x)$, in which, by assumption, they have the same mass---since $\bar x$ is a fixed point, $\mu_0((-\infty, \bar x)) = \mu_1((-\infty, \T(\bar x) = \bar x))$---; if $\bar x = a_0 = a_1$, instead, we can just flip the $x$-axis). 

We can furthermore assume $a_0 < a_1$  and thus $x < \T(x)$ for $x\in M_0$ (otherwise, we can exchange the roles of $\mu_0$ and $\mu_1$). Then, the sequence $\alpha_i$ constructed in the proof of \cref{lem:S_empty} is no longer finite, and $\alpha_i \to b_0 = b_1$ as $i \to +\infty$. This allows us to recursively define a (continuous) vector field $v$ in $(a_0, b_0)$ by means of \cref{eq:recurs_v}, after fixing it in $(a_0, \T(a_0))$ first. A priori, it could degenerate when approaching $\bar x$, though (cf. \cref{lm:ct}). 

If $\bar\mu_0(\bar x) \neq \bar\mu_1(\bar x)$, then we necessarily have $\T'(\bar x) < 1$ (because we are assuming $x < \T(x)$) and so, in the limit $i\to +\infty$, the sequence of intervals obtained recursively from \cref{eq:recurs_v} converges to $\bar x$:
\begin{equation}
\label{eq:vTi}
v(\T^i(x)) = v(x) { P_i(x)} \coloneqq v(x)  \prod_{j = 0}^{i-1} \T'(\T^j(x)),\quad \text{for}\quad x\in (a_0, \T(a_0)),
\end{equation}
and, since $\T'(\T^j(x)) < \frac12( 1+\T'(\bar x))< 1$ for $j$ large enough, we get 
\[
\|v\|_{L^\infty((\alpha_{i+1}, \alpha_{i+2}))}\le \|v\|_{L^\infty((\alpha_i, \alpha_{i+1}))}\to 0,\quad\text{as}\quad i \to +\infty,
\]
that is, $\lim_{x\to \bar x^-} v(x) = 0$, and we can fix $v(\bar x) = 0$. 

{
If $\bar \mu_0$ and $\bar \mu_1$ are Lipschitz continuous, by the same proof as in \cref{cor:Ck}, we immediately get that $v$ is locally Lipschitz continuous in $[a_0, \bar x)$. Let us check that, in fact, a bound on its derivative holds up to $\bar x$. 

Differentiating \cref{eq:vTi} and observing that $(\T^i)'(x) = \T'(\T^{i-1}(x))(\T^{i-1})'(x)=\dots = P_i(x)$, we get
\[
v'(\T^i(x)) = v'(x) +v(x) \sum_{j = 0}^{i-1} \frac{\T''(\T^j(x))}{\T'(\T^j(x))} P_j(x),\quad\text{for}\quad x\in (a_0, \T(a_0)).  
\]

In particular, since $\T' \ge \frac{\lambda}{\Lambda}$ and $\T''$ is bounded (because $\bar \mu_0$ and $\bar \mu_1$ are Lipschitz continuous), we can estimate 
\[
\begin{split}
&\|v'\|_{L^\infty((\alpha_i, \alpha_{i+1}))}\le \|v'\|_{L^\infty([a_0, \T(a_0)])} + C\|v\|_{L^\infty([a_0, \T(a_0)])} \sum_{j = 0}^{i-1}\tilde P_j,\\
&\text{where}\quad \tilde P_j\coloneqq \|P_j(x)\|_{L^\infty([a_0, \T(a_0)])}. 
\end{split}
\]
We observe that $P_j(x) = \T'(\T^{j-1}(x)) P_{j-1}(x)$. In particular, for $j$ large enough, $\T'(\T^{j-1}(x))\le 1-\eps$ with $\eps = \frac{1-\T'(\bar x)}{2}$, and we have $P_j(x)\le C (1-\eps/2)^j$ for all $j\in \N$, for some constant $C$. 

In conclusion, the previous sum is bounded and we get 
\[
\|v'\|_{L^\infty((\alpha_0, \bar x))}\le \|v'\|_{L^\infty([a_0, \T(a_0)])} + C\|v\|_{L^\infty([a_0, \T(a_0)])},
\]
that is, if $v$ is chosen smooth in $[a_0,\T(a_0)]$, we get a bound on $v'$ up to $\bar x$ and obtain that $v$ is Lipschitz continuous.
}

Finally, let us show the uniqueness of the flow. In the previous construction, we fixed $v(\bar x) = 0$.   Moreover, we have that, for any $\eps > 0$,
\[
\int_{A_{\pm, \eps}} \frac{\d x}{|v(x)|} = + \infty \quad\text{whenever}\quad A_{\pm, \eps} \neq \emptyset,
\]
where we introduced  the notation 
\[
A_{+, \eps} \coloneqq (\bar x, \bar x +\eps)\cap \supp \mu_0,\qquad A_{-, \eps} \coloneqq  (\bar x-\eps, \bar x)\cap \supp \mu_0.
\]

 Indeed, let us assume that we are in the same situation as in the construction above, and show an Osgood-type condition at $\bar x$:
\[
\int_{\bar x - \eps}^{\bar x } \frac{\d x}{|v(x)|} = +\infty. 
\]
Fix any $x_0\in (a_0, \bar x)$. We know that $\T^i(x_0) \uparrow \bar x$ as $i \to +\infty$, but also that 
\[
\int_{\T^i(x_0)}^{\T^{i+1}(x_0)} \frac{\d x}{|v(x)|} = 1. 
\]
In particular, taking $j\in \N$ large enough (depending on $\eps$) so that $\T^j(x_0)  > \bar x - \eps$, we get
\[
\int_{\bar x - \eps}^{\bar x } \frac{\d x}{|v(x)|} \ge \sum_{k \ge j} \int_{\T^k(x_0)}^{\T^{k+1}(x_0)} \frac{\d x}{|v(x)|} = \sum_{k \ge j} 1 = +\infty. 
\]

 We now show the uniqueness of the solution to \cref{eq:ode-1d} up to time 1 at all points $x\in {\supp \mu_0}$.  Since $v$ is continuous and non-zero in ${\supp \mu_0 \cup \supp \mu_1} \setminus \{\bar x \}$, we have existence and uniqueness for \cref{eq:ode} in this set (\cf \cite{MR2771257}). Indeed, for $x_0\in {\supp\mu_0}$ with $x_0\neq \bar x$, the flow can never cross $\bar x$ before time $1$ because, up until that moment, it would be continuous and, by the previous discussion, it requires infinite time to actually reach $\bar x$.

Now, we claim that the ODE
\[
\begin{cases}
      \partial_t  \phi(t,\bar x) = v(\phi(t,\bar x)), & t >0, \\
    \phi(0,\bar x) = \bar x,
\end{cases}
\]
is also well-posed: it has a unique solution $\phi(\cdot,\bar x) \equiv \bar x$. 

Indeed, arguing by  contradiction, let us assume, for example, that  $  \phi\left(t_1,\,\bar x\right)>\bar x$ for some $t_1>0$. For every $\delta>0$, we then have 
\[
t_1-\delta \geq \int^{t_1}_{\delta} \frac{\partial_t \phi(t,\,\bar x)}{v(\phi(t,\,\bar x))} \, \mathrm{d} t=\int_{\phi(\delta,\,\bar x)}^{\phi\left(t_1,\,\bar x\right)} \frac{1}{v(\xi)} \, \mathrm{d} \xi,
\]
which yields a contradiction because the 
\[
\limsup_{\delta \searrow 0} \int_{\phi(\delta,\,\bar x)}^{\phi\left(t_1,\,\bar x\right)} \frac{1}{v(\xi)} \, \mathrm{d} \xi = \int_{\bar x}^{\phi\left(t_1,\,\bar x\right)} \frac{1}{v(\xi)} \, \mathrm{d} \xi = \infty.
\]

We have thus shown that \cref{eq:ode-1d} has a unique  solution up to time $1$ at all points $x\in {\supp \mu_0}$
\end{proof}

Next, we deal with the case when $\T$ has two fixed points.

\begin{lemma}[Transport map with exactly two fixed points]
\label{lem:S_2}
    Let us consider $\mu_0, \mu_1 \in \mathcal{P}_{\mathrm{a.c.}}(\R)$ and suppose that their densities, $\bar \mu_0$ and $\bar \mu_1$, are continuous functions compactly supported in $[0, 1]$. Let us assume that the conditions \ref{it:m1}--\ref{it:m3}--\ref{it:m4}  hold (with \eqref{eq:LL}). Suppose, moreover, that the set $\mathcal{S}= \{0, 1\}$.

    Then there exists   a  velocity field $v: [0, 1]\to \R$  such that 
    \[
    \begin{aligned}
    &|v|> 0  &&\text{in}  \quad (0,1),\\ 
    &v(0) = v(1) = 0, && {}
   \end{aligned}
    \]
and 
\[
    \T (x) = \phi(1,x), \qquad x \in [0,1],
    \]
    where $\phi$ is the unique solution of \cref{eq:ode-1d} for $x \in {\supp \mu_0}$, and $v$ is continuous except possibly at $\mathcal{S}$. If, moreover, $|\bar\mu_0- \bar \mu_1|> 0$ in $\mathcal{S}$, then $v$ can be taken to be continuous also at $\mathcal{S}$. If, furthermore, $\bar \mu_0$ and $\bar \mu_1$ are Lipschitz continuous, then $v$ can be taken Lipschitz continuous up to $\mathcal{S}$. 
\end{lemma}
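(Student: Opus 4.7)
The plan is to reduce this to a combined application of the techniques in \cref{lem:S_empty} and \cref{lem:S_1}, but now iterating in both forward and backward directions from an interior starting point, so as to construct a single continuous vector field on $(0,1)$ that vanishes at both fixed points. First, without loss of generality (swapping $\mu_0 \leftrightarrow \mu_1$ if needed), I assume $\T(x) > x$ for all $x \in (0,1)$, which implies $\T'(0) \ge 1$ and $\T'(1) \le 1$, and under the extra assumption $|\bar\mu_0-\bar\mu_1|>0$ on $\mathcal{S}$ these become strict by \cref{eq:derivative-t}. I then fix an arbitrary $x_0 \in (0,1)$ and form the bi-infinite orbit $\alpha_i \coloneqq \T^i(x_0)$ for $i \in \mathbb{Z}$; since $\T$ has no fixed point in $(0,1)$, the sequence is strictly increasing, with $\alpha_i \to 1$ as $i\to+\infty$ and $\alpha_i\to 0$ as $i\to-\infty$.

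Next, I fix $v$ smooth and strictly positive on $[\alpha_0, \alpha_1]$, subject to the compatibility condition $v(\alpha_1) = \T'(\alpha_0) v(\alpha_0)$ (and, in the Lipschitz case, also matching of the first derivative, as in \cref{cor:Ck}), and extend $v$ to all of $(0,1)$ via the recursion $v(\T(x)) = \T'(x) v(x)$, applied both forward (for $x \in [\alpha_i, \alpha_{i+1}]$, $i \ge 0$) and backward (using $v(\T^{-1}(y)) = v(y)/\T'(\T^{-1}(y))$ for $y \in [\alpha_{i+1}, \alpha_{i+2}]$, $i<0$). Continuity across each $\alpha_i$ follows exactly as in the proof of \cref{lem:S_empty}.

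The decay at the two fixed points is then handled symmetrically using the identity
\[
v(\T^i(x)) = v(x) \prod_{j=0}^{i-1} \T'(\T^j(x)), \qquad x \in [\alpha_0,\alpha_1],
\]
valid for $i \in \mathbb{Z}$ (with the convention that an empty product is $1$ and negative indices give reciprocals). As $i \to +\infty$, $\T'(\T^j(x)) \to \T'(1) < 1$, so the product decays geometrically and $v(x) \to 0$ as $x \to 1^-$, allowing us to set $v(1) = 0$; this is exactly the argument of \cref{lem:S_1} at $\bar x = 1$. As $i \to -\infty$, $1/\T'(\T^j(x)) \to 1/\T'(0) < 1$, so a completely parallel argument gives $v(x) \to 0$ as $x \to 0^+$, and we set $v(0) = 0$. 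The Lipschitz estimate up to each endpoint is obtained by differentiating the above product and using the geometric decay together with boundedness of $\T''$ (from Lipschitz continuity of $\bar\mu_0, \bar\mu_1$), as in the corresponding step of the proof of \cref{lem:S_1}.

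Finally, for uniqueness of the flow, I verify the Osgood-type divergence $\int_0^\eps \d x/|v(x)| = \int_{1-\eps}^1 \d x/|v(x)| = +\infty$ at both endpoints, exactly as in \cref{lem:S_1}: since each interval $[\T^k(x_0), \T^{k+1}(x_0)]$ contributes exactly $1$ to $\int \d x/|v|$ by \cref{eq:int_inv_v}, and both tail-sums of such intervals are infinite, trajectories starting in $(0,1)$ cannot reach $0$ or $1$ in finite time, and constant trajectories at $0$ and $1$ are the unique solutions there, yielding well-posedness up to $t=1$. The main subtlety I expect is the careful bookkeeping in the backward iteration and the Lipschitz estimate at $x=0$, where the iteration involves dividing by $\T'$ rather than multiplying; but the ratio $1/\T'(0)<1$ at $0$ plays exactly the role of $\T'(1)<1$ at $1$, so the same geometric-series arguments apply verbatim.
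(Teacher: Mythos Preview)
Your argument is correct and rests on the same mechanism as the paper's proof: fix $v$ on a fundamental domain $[\alpha_0,\alpha_1]$ for the $\T$-action, propagate via the Acz\'el--Jabotinsky--Julia recursion, and control the behavior at each fixed point by the geometric decay coming from $\T'(1)<1$ (forward) and $1/\T'(0)<1$ (backward). The Osgood divergence and Lipschitz bounds follow identically.

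The only difference is organizational. The paper reduces explicitly to two invocations of \cref{lem:S_1}: it cuts $[0,1]$ at a point $1/2$ and its image $p=\T(1/2)$, applies \cref{lem:S_1} to the left sub-problem (fixed point at $0$) and to the reversed right sub-problem (transporting $\tilde\nu_1$ to $\tilde\nu_0$ via $\T^{-1}$, fixed point at $1$), and then checks that the two compatibility conditions on the overlap $[p,1/2]$ coincide, so the pieces glue. Your direct bidirectional iteration from a single seed interval is a cleaner packaging of the same content: the gluing compatibility is automatic because both halves are generated from the same initial data. The paper's version buys modularity (a literal black-box call to the already-proved lemma), while yours avoids the explicit compatibility check; mathematically they are equivalent.
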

\begin{proof}
Up to swapping the roles of $\mu_0$ and $\mu_1$ (and changing the sign of the vector field $v$), we can assume $\T(x) < x$ for $x \in (0, 1)$. 

Let $\nu_0$ and $\nu_1$ be the restrictions of $\mu_0$ and $\mu_1$, respectively, in the intervals $(0, 1/2)$ and $(0, p)$, where $p \coloneqq \T(1/2) < 1/2$. By definition of $\T$, $\nu_0$ and $\nu_1$ still have the same mass. Moreover, since $p < \frac12$, $\T\Restr{\supp \nu_0}$ has only one fixed point (because the support of $\nu_0$ is $[0, 1/2]$, and $\T(x) < x$ in $(0, 1)$), namely, $0$. Thus, we can apply \cref{lem:S_1} and deduce that there exists some $v$ defined in $[0, \frac12]$ that is continuous (except, possibly,  at $0$) and such that $\nu_0$ is transported to $\nu_1$. Such a velocity field is arbitrarily defined in $[p, \frac12]$ and then extended to the whole $[0, 1/2]$ by means of \cref{eq:int_inv_v}. The compatibility condition that needs to be satisfied to get continuity is given by 
\[
v(p) = \T'(1/2)\, v(1/2).
\]

On the other hand, let $\tilde \nu_0$ and $\tilde \nu_1$ be, respectively, the restrictions of $\mu_0$ and $\mu_1$ to the intervals $(1/2, 1]$ and $(p, 1]$. Then, we can repeat the proof of \cref{lem:S_1} with $\mu_0 = \tilde \nu_1$ and $\mu_1 = \tilde \nu_0$, which have $\T^{-1}$ as monotone transport map, to deduce again the existence of a continuous velocity field $\tilde v$ in $[p, 1]$ determined from its value on the interval $[p, 1/2]$ through  \cref{eq:int_inv_v}. The compatibility condition now is
\[
\tilde v(1/2) = (\T^{-1})'(p) \tilde v(p) = \frac{1}{\T'(\T^{-1}(p))} \tilde v(p) = \frac{1}{\T'(1/2)}\tilde v(p).  
\]
Namely, we can take $\tilde v = -v$ on $[p, 1/2]$. Then, the velocity field $v$ transports $\nu_0$ to $\nu_1$ and is continuous in $[0, 1/2]$, and the velocity field $-\tilde v$ transports $\tilde\nu_1$ to $\tilde \nu_0$ and is continuous in $[p, 1]$. Since $v = -\tilde v$ in $[p, 1/2]$, we can continuously extend $v$ by $-\tilde v$ to the whole interval $[0, 1]$. In particular, in this construction, we fix $v(0) = v(1) = 0$.  

As in the proof of \cref{lem:S_1}, we can show that, for any $\eps > 0$, 
\[
\int_{0}^{\eps} \frac{\d x}{|v(x)|} = \int_{1-\eps}^{1} \frac{\d x}{|v(x)|} =  +\infty. 
\]
and that existence and uniqueness of solutions of \cref{eq:ode-1d} hold  also at the fixed points.
\end{proof}

\subsection{Measures with unbounded supports} 

For measures with unbounded supports, we can recover results that are analogous to \cref{lem:S_empty}, \cref{cor:Ck}, and \cref{lem:S_1}. 

\begin{lemma}[Transport map without fixed points---unbounded setting]
\label{lem:S_empty_ub}
     Let us consider $\mu_0, \mu_1 \in \mathcal{P}_{\mathrm{a.c.}}(\R)$ and suppose that their densities, $\bar \mu_0$ and $\bar \mu_1$, are continuous functions (in their respective supports). Let us assume that the conditions \crefrange{it:m1}{it:m3} hold,  that either the support of $\mu_0$ or the support of $\mu_1$ is unbounded, and that the map $\T$ has no fixed points, $\mathcal{S} = \emptyset$.  

    Then there exists   a continuous velocity field $v:  {{\rm Conv}(\supp \mu_0 \cup \supp \mu_1)} \to \R$  such that 
    \[
|v|>0 \quad \text{in}  \quad  {{\rm Conv}(\supp \mu_0 \cup \supp \mu_1)},\qquad     \T (x) = \phi(1,x),\quad\text{for}\quad   x \in  {\supp \mu_0},
    \]
    where $\phi$ is the unique solution of \cref{eq:ode-1d} for $x \in  { \supp \mu_0}$.
\end{lemma}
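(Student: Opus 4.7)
My plan is to adapt the construction from \cref{lem:S_empty} to the unbounded setting. By monotonicity, continuity, and the absence of fixed points, either $\T(x) > x$ for all $x \in \supp\mu_0$ or $\T(x) < x$ for all such $x$; up to swapping $\mu_0$ and $\mu_1$ (and flipping the sign of $v$), I assume $\T(x) > x$. Because $\T$ is a continuous monotone bijection between the convex supports (\cref{th:ot1d}), $\supp\mu_0$ and $\supp\mu_1$ are simultaneously bounded or unbounded in each direction; write $(A, B) \coloneqq \mathrm{int}\,\mathrm{Conv}(\supp\mu_0 \cup \supp\mu_1)$, where at least one of $A, B$ is infinite.

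Pick any seed point $\alpha_0 \in \supp\mu_0$, and define $\alpha_{i+1} \coloneqq \T(\alpha_i)$ whenever $\alpha_i \in \supp\mu_0$, and $\alpha_{i-1} \coloneqq \T^{-1}(\alpha_i)$ whenever $\alpha_i \in \supp\mu_1$. By monotonicity, the forward sequence is strictly increasing; by continuity and the no-fixed-point assumption, it cannot have a finite accumulation point (such a point would be a fixed point of $\T$), so either it terminates at a finite right endpoint of $\supp\mu_1$ in finitely many steps (reducing to the setting of \cref{lem:S_empty}) or $\alpha_i \uparrow B$. The analogous argument for $\T^{-1}$ handles the backward direction. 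In every scenario, the intervals $\{[\alpha_i,\alpha_{i+1}]\}$ tile $\mathrm{Conv}(\supp\mu_0 \cup \supp\mu_1)$.

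Having the partition, I define $v$ on the seed interval $[\alpha_0, \alpha_1]$ as any positive smooth function satisfying the compatibility $v(\alpha_1) = \T'(\alpha_0) v(\alpha_0)$, and extend it by iterating the functional equation $v(\T(x)) = \T'(x) v(x)$ forwards and $v(x) = v(\T(x))/\T'(x)$ backwards. Exactly as in \cref{lem:S_empty}, this produces a continuous, strictly positive $v$ on $\mathrm{Conv}(\supp\mu_0 \cup \supp\mu_1)$. Differentiating $I(x) \coloneqq \int_x^{\T(x)} \d\xi / v(\xi)$ and using the functional equation shows $I$ is constant in $x$; rescaling $v$ by the finite constant $I(\alpha_0)$ achieves $I \equiv 1$, which is \cref{eq:int_inv_v}. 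Uniqueness of the flow up to $t=1$ follows from the standard argument that $G(\psi) \coloneqq \int_{\alpha_0}^\psi \d\xi / v(\xi)$ has $C^1$ inverse on its image, so $\phi(t,x)$ is explicitly recoverable from $G$. The chief difference from \cref{lem:S_empty} is checking that the iterates exhaust the unbounded extremities of $\mathrm{Conv}(\supp\mu_0 \cup \supp\mu_1)$, which is precisely where the no-fixed-point hypothesis is essential; by contrast, the loss of the uniform bounds \eqref{eq:LL} on $\T'$ is harmless because the construction only uses local continuity and positivity of $v$ on each bounded subinterval $[\alpha_i, \alpha_{i+1}]$.
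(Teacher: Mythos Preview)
Your approach is essentially the same as the paper's---seed an interval, then iterate the functional equation \cref{eq:jj} forwards and backwards---but there is a genuine error in your case analysis. The claim that ``$\supp\mu_0$ and $\supp\mu_1$ are simultaneously bounded or unbounded in each direction'' is false: a continuous monotone bijection between intervals need not preserve boundedness (think of $\tan$ on $(-\pi/2,\pi/2)$). Concretely, take $\mu_0$ uniform on $[0,1]$ and $\mu_1$ with density $\tfrac{3}{2}(y+1)^{-2}$ on $[1/2,\infty)$; one checks that $\T$ maps $[0,1)$ onto $[1/2,\infty)$ with $\T(x)>x$ throughout and no fixed points. Here your forward iterates $\alpha_i$ terminate after finitely many steps (once $\alpha_N>1$), yet $\supp\mu_1$ has no finite right endpoint, so your dichotomy ``terminates at a finite right endpoint of $\supp\mu_1$'' versus ``$\alpha_i\uparrow B$'' misses this case entirely, and the intervals $\{[\alpha_i,\alpha_{i+1}]\}$ do \emph{not} tile $(A,B)=(0,\infty)$.

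The repair is straightforward and is what the paper does: once the forward iteration exits $\supp\mu_0$ at $\alpha_N$, one more application of the functional equation on the remaining segment $[\alpha_{N-1},b_0)\subset[\alpha_{N-1},\alpha_N]$---where $\T$ is still defined and maps onto $[\alpha_N,\infty)$---extends $v$ continuously to the rest of the ray. The paper organizes this by first fixing $\supp\mu_0=[a_0,b_0]$, $\supp\mu_1=[a_1,\infty]$ with $b_0\in\R\cup\{\infty\}$ (so the boundedness mismatch is built into the notation from the start), and then handling $a_0=-\infty$ by a separate splitting argument. Your seed-and-iterate construction is correct in spirit; only the boundedness claim and the resulting incomplete dichotomy need to be fixed.
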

\begin{proof}
    Let us suppose, first, that ${\rm supp}(\mu_0) = [a_0, b_0]$ and ${\rm supp}(\mu_1) = [a_1, \infty]$, where $a_0,a_1\in \R$, $b_0\in \R\cup\{\infty\}$. 
Up to switching the roles of $\mu_0$ and $\mu_1$ if $b_0 = \infty$, we can assume $a_0 < a_1$ and $\T(x) > x$ for all $x\in {\rm supp}(\mu_0)$. 
    
The proof follows as the one in \cref{lem:S_empty} (we use the same notation, as well). In this case, we have that $\T'$ is continuous and positive in $M_0$, but we do not have universal bounds for it.

The case $M_0\cap M_1 = \emptyset$, follows exactly as \cref{lem:S_empty}, without uniform controls on the velocity (which might blow-up or go to zero at infinity). 

In the case $M_0\cap M_1 \neq \emptyset$ we define $\alpha_i$ again as in \cref{lem:S_empty}, and construct $v$ (continuous, and positive) in the same way recursively in the intervals $[\alpha_i, \alpha_{i+1}]$ (basically, $v$ is arbitrarily fixed in $[a_0, a_1]$, and then uniquely continued).  Differently from before, however, we lose the global $L^\infty$ control on the velocity, and in this case, $N$ might  even be infinite (when $b_0 = \infty$). 

We suppose now $a_0 = -\infty \le a_1$. Let any $c_0\in (-\infty, b_0)$, $c_1 = \T(c_0) > c_0$, and consider $\nu_0$ and $\nu_1$ the restrictions of $\mu_0$ and $\mu_1$ to the intervals $[c_0, b_0]$ and $[c_1, \infty]$. Then, by the previous argument, we can construct a velocity field in $[c_0, \infty]$ transporting $\nu_0$ into $\nu_1$. 
On the other hand, if $\bar\nu_0$ and $\bar \nu_1$ are the restrictions of $\mu_0$ and $\mu_1$ to the intervals $[-\infty, c_0]$ and $[a_1, c_1]$, then the previous velocity field (defined, for these measures, in $[c_0, c_1]$) uniquely extends, by the previous arguments, to the whole interval $[-\infty, c_1]$ as well. 
\end{proof}

\begin{corollary}[Higher regularity---unbounded setting]
\label{cor:Ck_ub}
In the setting of \cref{lem:S_empty_ub}, if, in addition, $\bar\mu_0\in C^k( {\supp\mu_0})$ and $\bar \mu_1\in C^k( {\supp\mu_1})$ for some $k\in \N$, then we can choose $v\in C^k( {{\rm Conv}(\supp \mu_0 \cup \supp \mu_1)})$.
\end{corollary}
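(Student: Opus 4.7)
The plan is to mirror the argument of Corollary~\ref{cor:Ck}, adapting the regularity-matching to the (possibly infinite) recursive construction used in Lemma~\ref{lem:S_empty_ub}. The first step is to observe that, from the identity $\T'(x)=\bar\mu_0(x)/\bar\mu_1(\T(x))$ of Theorem~\ref{th:ot1d}, together with the positivity of $\bar\mu_1$ on $\supp\mu_1$ and the hypotheses $\bar\mu_0\in C^k(\supp\mu_0)$, $\bar\mu_1\in C^k(\supp\mu_1)$, one deduces by a straightforward bootstrap that $\T'\in C^k(\supp\mu_0)$ and hence $\T\in C^{k+1}(\supp\mu_0)$.

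I then split into the cases treated in Lemma~\ref{lem:S_empty_ub}. If $M_0\cap M_1=\emptyset$, I choose $v$ freely and positively in $C^k(M_0)$, define $v$ on $M_1$ by $v(y)=\T'(\T^{-1}(y))\,v(\T^{-1}(y))$, and fill the gap (if any) inside $\mathrm{Conv}(\supp\mu_0\cup\supp\mu_1)$ by any $C^k$ positive interpolation matching the $C^k$ boundary data at its two endpoints. If $M_0\cap M_1\neq\emptyset$ and $\supp\mu_0=[a_0,b_0]$ has a finite left endpoint, I pick a positive $v\in C^k([a_0,a_1])$ satisfying the matching conditions
\begin{equation*}
\frac{\d^i}{\d x^i}\bigg|_{x=a_1^-}v(x)=\frac{\d^i}{\d x^i}\bigg|_{x=a_0^+}\bigl(\T'(x)\,v(x)\bigr),\qquad i=0,1,\dots,k,
\end{equation*}
exactly as in Corollary~\ref{cor:Ck}, and then extend $v$ to each interval $[\alpha_i,\alpha_{i+1}]$ via the recursion $v(\T(x))=\T'(x)\,v(x)$. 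Each extension is $C^k$ because $\T\in C^{k+1}$, and the $C^k$-matching at every subsequent breakpoint $\alpha_i$ is automatic once it holds at $\alpha_1=a_1$: differentiating the recursion $v\circ\T=\T'\,v$ expresses the jump of $v^{(i)}$ at $\alpha_i$ as a polynomial in the jumps of $v^{(j)}$ ($j\le i$) at $\alpha_{i-1}$, and these vanish by induction. Since $C^k$ smoothness is local, the (now possibly infinite) countable covering of $\mathrm{Conv}(\supp\mu_0\cup\supp\mu_1)$ by these closed intervals produces global $C^k$ regularity.

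For the doubly unbounded case $a_0=-\infty$, I follow the splitting of Lemma~\ref{lem:S_empty_ub}: I fix $c_0\in(-\infty,b_0)$, set $c_1\coloneqq\T(c_0)$, and apply the preceding step to the restricted pair $(\nu_0,\nu_1)$ to produce a $C^k$ velocity on $[c_0,+\infty)$. For the remaining pair $(\bar\nu_0,\bar\nu_1)$ on $(-\infty,c_0]$ and $[a_1,c_1]$, the initial datum on $[c_0,c_1]$ is no longer freely prescribed but is the $v$ already built, which is $C^k$ and satisfies $v\circ\T=\T'\,v$ identically on its domain; in particular the analogous matching conditions at $c_0$ hold automatically. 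Extending $v$ backwards by the same recursion along the intervals $\T^{-i}([c_0,c_1])$, whose union covers $(-\infty,c_0]$, preserves $C^k$ regularity by the same inductive argument. The main (and only nonroutine) obstacle is verifying that the single set of matching conditions imposed at the base interval propagates through the infinite chain of gluings; this, however, reduces to a direct induction from the recursion $v\circ\T=\T'\,v$ and the fact that $\T\in C^{k+1}$, precisely as in Corollary~\ref{cor:Ck}.
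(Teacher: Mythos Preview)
Your proposal is correct and follows exactly the approach indicated in the paper: the paper's own proof is the single sentence ``Cf.\ proof of \cref{cor:Ck} by means of \cref{lem:S_empty}, using \cref{lem:S_empty_ub} in this case,'' and you have simply spelled out what that means in each of the subcases of \cref{lem:S_empty_ub}. Your additional remarks---that the matching conditions at the first breakpoint propagate inductively through the (possibly infinite) chain, and that in the doubly unbounded case the datum on $[c_0,c_1]$ satisfies the needed compatibilities automatically because $v\circ\T=\T'\,v$ holds identically there---are the natural details behind the paper's one-line reference.
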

\begin{proof}
Cf. proof of \cref{cor:Ck} by means of \cref{lem:S_empty}, using \cref{lem:S_empty_ub} in this case. 
\end{proof}

\begin{lemma}[Transport map with exactly one fixed point---unbounded setting]
\label{lem:S_1_ub}
     Let us consider $\mu_0, \mu_1 \in \mathcal{P}_{\mathrm{a.c.}}(\R)$ and suppose that their densities, $\bar \mu_0$ and $\bar \mu_1$, are continuous functions (in their respective supports). Let us assume that the conditions \crefrange{it:m1}{it:m3} hold.

        Let us suppose, moreover, that ${\rm supp}(\mu_0) = [a_0, b_0]$ and ${\rm supp}(\mu_1) = [a_0, \infty]$, where $a_0\in \R$, $b_0\in \R\cup\{\infty\}$, and that the map $\T$ has a single fixed point, $\mathcal{S} = \{a_0\}$. 

    Then there exists   a  velocity field $v:  {  \supp \mu_0 \cup \supp \mu_1 } \to \R$  such that 
   \[
    \begin{aligned}
    &|v|> 0  &&\text{in}  \quad  {\supp \mu_0 \cup \supp \mu_1}\,\setminus \{a_0\}, \\ 
    &v(a_0) = 0, && {}
   \end{aligned}
    \]
and 
\[
    \T (x) = \phi(1,x), \quad x \in  {\supp \mu_0},
    \]
    where $\phi$ is the unique solution of \cref{eq:ode-1d} for $x \in {\supp \mu_0}$, and $v$ is continuous except possibly at $a_0$. If, moreover, $\bar\mu_0(a_0) \neq \bar \mu_1(a_0)$, then $v$ can be taken to be continuous also at $a_0$. If, furthermore, $\bar \mu_0$ and $\bar \mu_1$ are Lipschitz continuous, then $v$ can be taken Lipschitz continuous up to $a_0$.  
\end{lemma}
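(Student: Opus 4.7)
The plan is to adapt the proof of \cref{lem:S_1} by viewing the current setting as its mirror image: here the fixed point sits at the left endpoint $a_0$ of both supports, so the recursive construction propagates \emph{downward} toward $a_0$ via backward iteration of $\T$, in the same way that \cref{lem:S_1} propagated upward toward its right-endpoint fixed point via forward iteration. Unboundedness of the support of $\mu_1$ (and possibly of $\mu_0$) is handled by delegating the ``upward'' extension to \cref{lem:S_empty_ub}.

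First I would observe that monotonicity of $\T$, together with $\T(a_0) = a_0$ and the surjectivity of $\T$ onto $\supp \mu_1 = [a_0, \infty)$, forces $\T(x) > x$ for all $x \in (a_0, b_0)$: indeed, if $b_0 < \infty$ then $\T(x) \to \infty$ as $x \to b_0^-$, and if $b_0 = \infty$ the alternative case is handled by swapping $\mu_0 \leftrightarrow \mu_1$ and flipping the sign of $v$. By \cref{eq:derivative-t}, $\T'(a_0) = \bar\mu_0(a_0)/\bar\mu_1(a_0) \geq 1$, with strict inequality precisely when $\bar\mu_0(a_0) \neq \bar\mu_1(a_0)$.

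Next, I would fix an arbitrary $c_0 \in (a_0, b_0)$ and set $c_1 \coloneqq \T(c_0)$, then prescribe $v$ on $[c_0, c_1]$ as a smooth positive function with $v(c_1) = \T'(c_0) v(c_0)$. The extension to $(a_0, c_0]$ is performed by the backward recurrence $v(y) = v(\T(y))/\T'(y)$ iterated along the sequence $\alpha_{-i} \coloneqq \T^{-i}(c_0)$, which monotonically decreases to $a_0$, yielding a continuous positive $v$ on $(a_0, c_0]$. For the extension to $[c_1, \infty)$, if $b_0 = \infty$ I iterate the forward recurrence $v(\T(x)) = \T'(x) v(x)$ along $\alpha_i \coloneqq \T^i(c_0) \to \infty$ as in \cref{lem:S_empty_ub}; if $b_0 < \infty$ the restricted map $\T\colon [c_0, b_0) \to [c_1, \infty)$ has no fixed points, so applying \cref{lem:S_empty_ub} to $\mu_0|_{[c_0, b_0]}$ and $\mu_1|_{[c_1, \infty)}$ produces a continuous positive velocity on $[c_0, \infty)$; since that construction allows an arbitrary prescription on a leftmost sub-interval, we can make it match the $v$ already defined on $[c_0, c_1]$.

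The main point is controlling $v$ near $a_0$. Telescoping the backward recurrence gives
\[
v(\alpha_{-i}) = v(c_0) \bigg(\prod_{j=1}^{i} \T'(\alpha_{-j})\bigg)^{-1},
\]
so when $\bar\mu_0(a_0) \neq \bar\mu_1(a_0)$ the strict inequality $\T'(a_0) > 1$ yields $\T'(\alpha_{-j}) \geq 1 + \eta$ eventually and hence geometric decay $\|v\|_{L^\infty([\alpha_{-i-1}, \alpha_{-i}])} \to 0$, so $v$ extends continuously at $a_0$ with $v(a_0) = 0$; the Lipschitz claim follows by differentiating the recurrence and repeating the telescoping argument from \cref{lem:S_1}. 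Uniqueness of the flow then follows verbatim from the end of the proof of \cref{lem:S_1}: each backward interval contributes $\int_{\alpha_{-i-1}}^{\alpha_{-i}} \frac{\dd \xi}{|v(\xi)|} = 1$, so $\int_{a_0}^{a_0 + \eps} \frac{\dd \xi}{|v(\xi)|} = +\infty$, which rules out any trajectory reaching $a_0$ in finite time and forces $\phi(\cdot, a_0) \equiv a_0$. The main obstacle I anticipate is the case $b_0 < \infty$, where the forward $\T$-orbit immediately exits $\supp \mu_0$ and the direct iterative construction of $v$ beyond $b_0$ fails; this is precisely what is circumvented by delegating the upward extension to \cref{lem:S_empty_ub} as described above.
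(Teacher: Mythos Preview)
Your proposal is correct and follows essentially the same route as the paper. The paper's proof is slightly more modular: rather than redoing the backward recurrence toward $a_0$ by hand, it directly applies \cref{lem:S_1} to the restricted (compactly supported) measures $\mu_0|_{[a_0,c_0]}$ and $\mu_1|_{[a_0,c_1]}$ to fix $v$ on $[a_0,c_1]$, and then, exactly as you do, invokes \cref{lem:S_empty_ub} for the right portion with the velocity already prescribed on $[c_0,c_1]$; your explicit backward telescoping is just the content of \cref{lem:S_1} unpacked for the mirrored configuration.
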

\begin{proof}
We can assume, without loss of generality, that $\T(x) > x$ (which is directly true if $b_0 < \infty$, since there is only one fixed point). 

Take any $c_0 \in (a_0, b_0)$, $c_1 = \T(c_0) > c_0$. We can now apply \cref{lem:S_1} to $\nu_0$ and $\nu_1$, the restrictions of $\mu_0$ and $\mu_1$ respectively in the intervals $[a_0,  c_0]$ and $[a_0, c_1]$, which will fix a velocity field in $[a_0, c_1]$. 

Let $\bar\nu_0$ and $\bar \nu_1$ be the restrictions of $\mu_0$ and $\mu_1$ in $[c_0, b_0]$ and $[c_1, \infty]$ respectively. Notice that, by assumption, the monotone map from $\bar\nu_0$ to $\bar \nu_1$ does not have any fixed point. We can now apply the result from \cref{lem:S_empty_ub}, where the velocity field has already been fixed in the interval $[c_0, c_1]$, and can be uniquely extended as in the proof of \cref{lem:S_empty_ub} (and \cref{lem:S_empty}) to the whole space. 
\end{proof}

\section{Proofs of the main results}

Finally, putting everything together we can deal with the general case,   \cref{th:main-1d}.

\begin{proof}[Proof of \cref{th:main-1d} (and, therefore, \cref{th:A})]
If $\mathcal{S} = \emptyset$, we apply \cref{lem:S_empty}  (or \cref{lem:S_empty_ub}); if $\mathcal{S} = \{x_1\}$, we apply \cref{lem:S_1} or \cref{lem:S_1_ub} (on each side, $(-\infty, x_1)$ and $(x_1, +\infty)$), and if $\mathcal{S} = \{x_1, x_2\}$ with $x_1  < x_2$, we apply \cref{lem:S_1} (or \cref{lem:S_1_ub}) to the side intervals (namely, $(-\infty, x_1)$ and $(x_2, +\infty)$), and \cref{lem:S_2} to the middle interval (that is, $(x_1, x_2)$), after a rescaling and translation if necessary. 

Let us, therefore, suppose that $\mathcal{S}$ contains more than two elements.  We fix $v \equiv 0$ in ${\mathcal{S}}$. 
Since $\T$ is continuous,  $ \R\setminus \mathcal{S}$ can be written as a disjoint countable union of open intervals (being an open set in $\R$), $  \R\setminus \mathcal{S} = \bigcup_{i\in \N} I_i$. We can fix $I_0 \coloneqq (-\infty, x_l)$ if  $x_l \coloneqq \min(\mathcal{S}) > -\infty$ and $I_1 \coloneqq (x_r, +\infty)$ if $x_r \coloneqq \max(\mathcal{S})<\infty$ (them being empty otherwise). We then apply \cref{lem:S_1} or \cref{lem:S_1_ub} to $I_0$ and $I_1$, and \cref{lem:S_2} to each $I_i$ with $i\ge 2$, to get the desired result.

In order to obtain the conditional continuity and Lipschitz regularity, we notice that, if $|\bar \mu_0 - \bar \mu_1| > 0$ on $\partial\mathcal{S}$, then we  cannot have an accumulation point of $\partial\mathcal{S}$ (alternatively, in any compact set, there are finitely many intervals $I_i$). Indeed, otherwise there would be a sequence of points $x_k\in \partial I_i \subset \mathcal{S}$ with $x_k \to \bar x\in \mathcal{S}$ as $k\to +\infty$. In particular, from $\T(x_k) = x_k$, we get 
\[
\T'(\bar x) = \frac{\T(x_k) - \T(\bar x)}{x_k - \bar x} = 1;
\]
in turn, by \cref{eq:ode-1d}, this implies $\bar\mu_0(\bar x) = \bar\mu_1(\bar x)$,  contradicting our assumption. Hence, we can apply \cref{lem:S_2} finitely many times in any given compact set, and use the fact that the concatenation of finitely many continuous or Lipschitz continuous functions remains continuous or Lipschitz continuous, to obtain that $v$ can be taken locally Lipschitz continuous in this case. 

Finally, on the uniqueness of the flow, as in the proof of \cref{lem:S_1}, we can show that, for any  $\bar x\in \partial\mathcal{S}$ and $\eps > 0$,
\[
\int_{A_{\pm, \eps}} \frac{\d x}{|v(x)|} = + \infty \quad\text{whenever}\quad A_{\pm, \eps} \neq \emptyset,
\]
where we use again the notation 
\[
A_{+, \eps} \coloneqq (\bar x, \bar x +\eps)\cap \supp \mu_0,\qquad A_{-, \eps} \coloneqq  (\bar x-\eps, \bar x)\cap \supp \mu_0.
\]
Indeed, if $A_{\pm, \eps}\cap \mathring{\mathcal{S}} \neq \emptyset$, since $v \equiv 0$ in $\mathcal{S}$, the result follows. There are now two cases.

\begin{enumerate}[label=\textbf{Case \arabic*.}]

\item If $A_{\pm, \eps}\cap \partial \mathcal{S} = \emptyset$ for $\eps > 0$ small enough, we are in the same situation as \cref{lem:S_1}, so the result follows. 
\item  If $A_{\pm, \eps}\cap \partial \mathcal{S} \neq \emptyset$ for all $\eps>0$ small, then there is a monotone sequence of fixed points $\partial\mathcal{S}\ni \bar x_k \to \bar x$ as $k\to \infty$, with $x_k \in A_{\pm, \eps}$. Let us assume, without loss of generality, that $x_k$ is an increasing sequence and we are looking at $A_{-, \eps}$. Notice that, if $y_k\in (x_k, x_{k+1})$ is not a fixed point (which we can always find, otherwise $(x_k, x_{k+1})$ is an open interval contained in $A_{\pm, \eps}$), then $\T(y_k) \in (x_k, x_{k+1})$ as well, with $\T(y_k) \neq y_k$. Then, we have 
\[
\int_{A_{-, \eps}}\frac{\d x}{|v(x)|} \ge \sum_{k\in \N} \left|\int_{y_k}^{\T(y_k)} \frac{\d x}{v(x)}\right| = \sum_{k\in \N} 1 = +\infty. 
\]
\end{enumerate}
In both cases, we get the desired result. 

 The existence and uniqueness of solutions to \cref{eq:ode-1d} now follow arguing as in \cref{lem:S_1}.

\end{proof}

\begin{remark}[On the assumption ``$|\bar\mu_0- \bar \mu_1|> 0$ in $\mathcal{S}$''] \label{rmk:preva}
While the condition $|\bar\mu_0 - \bar \mu_1|> 0$ in $\mathcal{S}$ can be violated for a suitable choice of measures $\mu_0$ and $\mu_1$, such a situation is \emph{uncommon}. Namely, given $\mu_0$, by \cref{th:main-1d} we have that, for ``almost every'' $\mu_1$, we can construct a Lipschitz continuous field $v$. This can be formalized as follows. 

Given a measure $\mu_0$ and a monotone map $\T$, we define $\mu_1 \coloneqq \T_\# \mu_0$. Let us show that, given any measure $\mu_0$ satisfying \crefrange{it:m1}{it:m3}, the set of strictly monotone and $C^1$ maps $\T$ for which $\mu_0$ and $\mu_1 \coloneqq \T_{\#}\mu_0$   satisfy $|\bar \mu_0 - \bar \mu_1| > 0$ in $\mathcal{S} = \mathcal{S}(\T)$ is \emph{relatively prevalent}\footnote{~We use  the language of \emph{prevalence} from~\cite{MR1161274,MR1191479,MR2149086}.} among monotone and $C^1$ maps (we use the notion of relative prevalence on the set of strictly monotone maps seen as a convex subset of $C^1$ maps). Namely, \emph{for almost every} map $\T$, we have $|\bar \mu_0 - \bar \mu_1| > 0$ in $\mathcal{S}$.

To show that, given any $\mu_0$ and $\T$ as above, and $f\in C^1$, we construct the following map (\cf~\cite[Definition 4.1 or 4.5]{MR2149086}):
\[
\T_{\lambda, f}(x) \coloneqq \T(x) - f(x) -\lambda,\quad\text{for}\quad x\in \R, \ \lambda\in \R. 
\]
It is now enough to prove that (by \cite[Proposition 4.6]{MR2149086}, see also \cf~\cite[Definitions 4.1 or 4.5]{MR2149086}), on the one hand $|\{\lambda\in \R : \T'_{\lambda,\bar f}> 0\}| > 0$ for some $\bar f\in C^1$ (which immediately holds taking, for example, $\bar f(t) = -t$); and on the other hand, that for almost every $\lambda\in \R$ and every $f\in C^1$ with $(\T - f)'> 0$, 
\[
|\bar \mu_0  - \bar \mu_{\lambda, f}| > 0\quad\text{in}\quad \mathcal{S}_{\lambda, f} \coloneqq \mathcal{S}(\T_{\lambda, f}) = \{x : x = \T_{\lambda, f}(x)\},
\]
where $\mu_{\lambda, f} = (\T_{\lambda, f})_{\#} \mu_0$ and $\bar\mu_{\lambda, f} $ is its density as an absolutely continuous measure. Equivalently, by \cref{th:ot1d}, it is enough to show that 
\[
\T_{\lambda, f}'(x) \neq 1\quad\text{in}\quad \mathcal{S}(\T_{\lambda, f})\quad \Leftrightarrow \quad \widetilde \T_f' \neq 0 \quad\text{in}\quad \{\widetilde \T_f = \lambda\},\qquad  \quad\text{for a.e.}\ \lambda\in \R, 
\]
where we have denoted $\widetilde \T_f \coloneqq \T - f-{\rm Id}$. This immediately holds by Sard's theorem.
\end{remark}

From \cref{rmk:preva}, we deduce the following corollary of \cref{th:main-1d}.

\begin{corollary}[Approximate controllability, $d=1$]\label{prop:approx}
        Let us consider $\mu_0, \mu_1 \in \mathcal{P}_{\mathrm{a.c.}}(\R)$ and suppose that their densities, $\bar \mu_0$ and $\bar \mu_1$, are continuous functions (in their respective supports). Let us assume that \crefrange{it:m1}{it:m3} hold. 
        For every $\varepsilon >0$, there exists $\mu_1^\varepsilon \in \mathcal{P}_{\mathrm{a.c.}}(\R)$ such that $\mathrm{dist}(\mu_1, \,  \mu^\varepsilon_1)  < \varepsilon$  (in the sense of the $L^1$ or of the Wasserstein distance) and there exists a continuous velocity field $v^\varepsilon: {{\rm Conv}(\supp \mu_0 \cup \supp \mu_1^\varepsilon)} \to \R$  such that 
   \[
    \begin{aligned}
   & |v^\varepsilon|> 0  &&\text{in}  \quad {{\rm Conv}(\supp \mu_0 \cup \supp \mu_1)}\,\setminus \mathcal{S}, \\ 
    &v^\varepsilon \equiv 0 &&\text{in}\quad {\mathcal{S}},
   \end{aligned}
    \]
    where  $\mathrm T$ is the monotone optimal transport map from \cref{th:ot1d},  $\mathcal S$ is the set of fixed points of the map $\T$ in ${\supp \mu_0}$, and $\phi$ is the unique solution of  \cref{eq:ode-1d}. If,  furthermore, $\bar \mu_0$ is Lipschitz continuous, then  $v^\varepsilon$ can be taken Lipschitz continuous.
\end{corollary}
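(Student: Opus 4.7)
The plan is to perturb $\mu_1$ by additively shifting the monotone transport map $\T$ by a small constant $\lambda$, in the spirit of the Sard-based prevalence argument of \cref{rmk:preva}. Specifically, for $\lambda \in \R$ small, define $\T_\lambda(x) \coloneqq \T(x) - \lambda$ and $\mu_1^\lambda \coloneqq (\T_\lambda)_\# \mu_0$. Since $\T$ is strictly increasing and $C^1$, so is $\T_\lambda$; its image is a translate of $\supp \mu_1$, hence convex, and the density of $\mu_1^\lambda$ is $\bar\mu_1^\lambda(y) = \bar\mu_1(y+\lambda)$, continuous and positive on $\supp \mu_1^\lambda = \supp \mu_1 - \lambda$. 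Thus $\mu_1^\lambda$ inherits all the hypotheses \ref{it:m1}--\ref{it:m3} from $\mu_1$, and $\T_\lambda$ is the unique monotone transport map between $\mu_0$ and $\mu_1^\lambda$.

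First I would show that $\mu_1^\lambda \to \mu_1$ as $\lambda \to 0$ in both senses required: the coupling $(\T, \T_\lambda)_\# \mu_0$ yields $W_p(\mu_1, \mu_1^\lambda) \le |\lambda|$ for any $p \ge 1$, while continuity of translations in $L^1$ gives $\|\bar\mu_1^\lambda - \bar\mu_1\|_{L^1} \to 0$. Hence for any $\varepsilon > 0$ there exists $\lambda_0 > 0$ with $\mathrm{dist}(\mu_1, \mu_1^\lambda) < \varepsilon$ whenever $|\lambda| < \lambda_0$.

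Next I would apply Sard's theorem to the $C^1$ function $\widetilde\T(x) \coloneqq \T(x) - x$, exactly as outlined at the end of \cref{rmk:preva}: for Lebesgue-almost every $\lambda \in \R$, $\widetilde\T'(x) \neq 0$ whenever $\widetilde\T(x) = \lambda$. Since the fixed-point set of $\T_\lambda$ equals $\mathcal{S}(\T_\lambda) = \{x : \widetilde\T(x) = \lambda\}$, this means $\T'_\lambda(x) = \T'(x) \neq 1$ on $\mathcal{S}(\T_\lambda)$. Using \cref{eq:derivative-t} at a fixed point $x$ of $\T_\lambda$, where $\T(x) = x + \lambda$, we obtain $\bar\mu_0(x) \neq \bar\mu_1(x+\lambda) = \bar\mu_1^\lambda(x)$, so $|\bar\mu_0 - \bar\mu_1^\lambda| > 0$ on $\mathcal{S}(\T_\lambda)$. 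Picking any such $\lambda$ with $|\lambda| < \lambda_0$ and setting $\mu_1^\varepsilon \coloneqq \mu_1^\lambda$, the corollary then follows directly from \cref{th:main-1d}: the velocity field $v^\varepsilon$ supplied by that theorem is continuous (and Lipschitz continuous if $\bar\mu_0$ is, because $\bar\mu_1^\varepsilon$ is then the translate of a Lipschitz function, hence Lipschitz itself).

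The main conceptual ingredient is the Sard-type genericity argument of \cref{rmk:preva}; the rest is routine verification that the one-parameter family $\lambda \mapsto \mu_1^\lambda$ stays within the class where \cref{th:main-1d} applies and is continuous at $\lambda = 0$. The only minor subtlety is ensuring that the translation is well-defined on the relevant supports, but this is automatic since the domain of $\bar\mu_1$ is $\supp \mu_1$ and we evaluate $\bar\mu_1^\lambda$ on $\supp\mu_1 - \lambda$.
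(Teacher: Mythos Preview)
Your approach is essentially identical to the paper's: perturb $\mu_1$ via $\mu_1^\lambda \coloneqq (\T-\lambda)_\#\mu_0$, use the Sard argument from \cref{rmk:preva} to pick a good small $\lambda$, then invoke \cref{th:main-1d}. The verification that $\mu_1^\lambda \to \mu_1$ and that $|\bar\mu_0-\bar\mu_1^\lambda|>0$ on $\mathcal{S}(\T_\lambda)$ is correct.

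There is, however, a gap in your Lipschitz claim. You write that $\bar\mu_1^\varepsilon$ is ``the translate of a Lipschitz function, hence Lipschitz itself,'' but $\bar\mu_1^\lambda(y)=\bar\mu_1(y+\lambda)$ is a translate of $\bar\mu_1$, which by hypothesis is only \emph{continuous}; the Lipschitz assumption is on $\bar\mu_0$, not on $\bar\mu_1$. Since the Lipschitz conclusion of \cref{th:main-1d} requires \emph{both} densities to be Lipschitz, your argument does not yield Lipschitz $v^\varepsilon$ as stated. The paper patches this with one extra line: it first replaces $\mu_1$ by a nearby measure with Lipschitz density (a standard mollification, still $\varepsilon$-close), and only then performs the $\lambda$-translation. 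Adding this preliminary smoothing step closes the gap.
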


\cref{prop:approx} implies, in particular, \cref{th:B} as well.

\begin{proof}[Proof of \cref{prop:approx} (and, therefore,  \cref{th:B})]
Up to a standard smoothing argument, we can assume that $\bar \mu_1$ is Lipschitz. We consider the monotone map  $\T$ between $\mu_0$ and $\mu_1$ and define $\mu_1^\eps \coloneqq (\T(\cdot) - \lambda)_\# \mu_0$ for some arbitrarily small $\lambda$ such that we are in the context of \cref{rmk:preva}. Then the result follows from \cref{th:main-1d}. 
\end{proof}

\subsection{On the assumptions of  \texorpdfstring{\cref{th:main-1d}}{Theorem 2.2}}
\label{ssec:sharp}

We conclude this section by showing that, in \cref{lem:S_1} (and, consequently, in   \cref{th:main-1d}), if $\bar \mu_0$ and $\bar \mu_1$ have the same value at the fixed point $\bar x$, continuity of $v$ may indeed fail. Moreover, in general, $v$ does not need to belong to $L^1_\loc$ around $\bar x$.

\begin{lemma}\label{lm:ct}
There exist measures $\mu_0$ and $\mu_1$  satisfying the hypotheses of \cref{lem:S_1}  such that $\bar \mu_0(\bar x) = \bar\mu_1(\bar x)$  and either $v$ cannot be taken bounded, or there is no uniqueness of the flow \cref{eq:ode-1d} (more precisely, it is not true $|v| > 0$ outside of $\mathcal{S}$).
Moreover, if $v$ is continuous outside of $\bar x$, then it does not belong to $L^1_{\rm loc}$ around $\bar x$.
\end{lemma}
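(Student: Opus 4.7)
The plan is to exhibit a concrete counterexample. Put $\bar x = 0$, take $\mu_0$ to be Lebesgue on $[-1, 0]$, and build the monotone transport $\T$ so that $\T(0) = 0$, $\T(x) > x$ for $x \in [-1, 0)$, and $\T'(0) = 1$ (equivalently $\bar\mu_1(0) = \bar\mu_0(0) = 1$). The iterated Julia identity gives $v(\T^j(x)) = v(x)\,P_j(x)$ with $P_j(x) = \prod_{i<j} \T'(\T^i(x))$ and, by the chain rule, $(\T^j)'(x) = P_j(x)$.

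The first concrete step is the change-of-variables identity obtained by partitioning $[\alpha_0, 0)$ into the fundamental domains $I_j = [\T^j(\alpha_0), \T^{j+1}(\alpha_0)]$ and substituting $y = \T^j(x)$ on each, using $\d y = P_j(x)\,\d x$ together with $v(y) = v(x)P_j(x)$:
\[
\int_{\alpha_0}^{0} v(y)\,\d y \;=\; \int_{[\alpha_0,\, \alpha_1]} v(x) \sum_{j \ge 0} P_j(x)^2\,\d x.
\]
Next, I would tune $\T$ so that along the orbit $x_j = \T^j(x_0)$ the factors $\T'(x_j)$ are just barely smaller than one, producing a slow decay $P_j(x) \gtrsim j^{-\alpha}$ with $\alpha < 1/2$, so that $\sum_{j \ge 0} P_j(x)^2 = +\infty$ on a set of positive measure in the initial interval. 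Then, for any admissible $v > 0$ on $[\alpha_0, \alpha_1]$, the displayed identity forces $\int v = +\infty$ near $\bar x$, i.e.\ $v \notin L^1_\loc$. Since a bounded positive function on a bounded interval is automatically in $L^1_\loc$, $v$ cannot be bounded. If one instead relaxes $|v| > 0$ and lets $v$ vanish at some interior point, then $v$ becomes bounded but the Osgood integrability $\int \d\xi/|v| = +\infty$ fails locally near the added zero, and uniqueness for \cref{eq:ode-1d} is lost.

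The hard part is producing an explicit $\T$ realizing this slow decay. The naive heuristic $P_j(x) \approx (-\T^j(x))^p/(-x)^p$ valid when $\T(x) - x \sim c(-x)^{p+1}$ to leading order at $0$ always yields $\sum P_j^2 < +\infty$ in the smooth regime, consistent with the good behavior produced by clean parabolic maps. To defeat this heuristic, $\T$ must be chosen so that $\T' - 1$ has indefinite sign along the orbit (while still keeping $\T(x) > x$ on $[-1, 0)$ and $\T'$ continuous at $0$ with value one). The technical bulk of the proof is the orbit analysis for such an oscillating $\T$ together with the companion Riemann-sum error control that certifies $\sum_{j \ge 0} P_j(x)^2 = +\infty$ for the constructed $\T$, despite each individual factor tending to one.
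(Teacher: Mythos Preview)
Your change-of-variables identity is correct, but the plan you build on it cannot succeed as stated. The obstruction is elementary: since the intervals $I_j$ partition a finite segment, you have
\[
\int_{\alpha_0}^{\alpha_1} \sum_{j\ge 0} P_j(x)\,\d x \;=\; \sum_{j\ge 0} |I_j| \;<\; \infty,
\]
so $\sum_j P_j(x) < \infty$ for a.e.\ $x\in[\alpha_0,\alpha_1]$. Hence $P_j(x)\to 0$ a.e., and therefore $\sum_j P_j(x)^2 < \infty$ a.e.\ as well. In particular, a lower bound $P_j(x)\gtrsim j^{-\alpha}$ with any $\alpha<1$ on a set of positive measure is impossible. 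Your last paragraph correctly diagnoses that $\T'-1$ must change sign, but you retain the target ``$\sum_j P_j(x)^2=+\infty$ on a set of positive measure'', and the obstruction above kills that target regardless of how $\T$ is tuned. Note also that with $P_j\to 0$ you would get $v(\T^j(x))=v(x)P_j(x)\to 0$, so this route does not even deliver unboundedness of $v$.

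The paper's construction runs in the opposite direction: it builds $\T$ (with $\T'(\bar x)=1$) so that the partial products $P_j(x_0)=\prod_{i<j}\T'(\T^i(x_0))$ \emph{diverge to $+\infty$} along a single orbit. This is done by prescribing the orbit $\alpha_i$ first (via a summable sequence $\beta_i=\alpha_i-\alpha_{i+1}$) and forcing $\T'(\alpha_i)=1+\tfrac14(\beta_i-\beta_{i+1})/\beta_i>1$; choosing $\beta_i$ with $\sum_i (\beta_i-\beta_{i+1})/\beta_i=+\infty$ makes the product blow up and immediately yields $|v(\alpha_i)|\to\infty$. For the $L^1$ statement one needs more: the paper makes $\T'$ constant on a fixed \emph{proportion} of each interval near $\alpha_i$, so that the blow-up of $|v|$ occurs on a sub-interval of length $\sim\eta\beta_i$, and then picks $\beta_i\sim 1/(i\log^2 i)$ (hence $\alpha_i\sim 1/\log i$) to make $\sum_i \eta\beta_i\cdot |v|$ on those sub-intervals diverge. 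In terms of your identity, this corresponds to making $P_j$ \emph{spike} on a $j$-dependent shrinking subset of the fundamental domain so that $\int P_j^2$ stays bounded below while $\int P_j=|I_j|\to 0$; the integral $\int\sum_j P_j^2$ then diverges even though $\sum_j P_j(x)^2<\infty$ for a.e.\ $x$. That is the mechanism you are missing.
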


Before proving \cref{lm:ct}, we need the following preliminary result. 

\begin{lemma}\label{lem:Tcount2}
There exists a map $\T:[0, 1]\to [0, 1]$ such that 
\[
\T\in C^1([0, 1]),\qquad  \frac12 \le \T'\le \frac32 \quad\text{in}\quad [0, 1],\qquad \T(x) < x,\quad\text{for}\quad x\in (0, 1],
\]
and  
\[
\prod_{i\ge 0} \T'(\T^i(1/2)) = +\infty. 
\]
\end{lemma}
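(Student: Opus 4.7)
The idea is to prescribe both the forward orbit $a_i \coloneqq \T^i(1/2)$ of $1/2$ and the derivative values $c_i \coloneqq \T'(a_i)$ at the orbit points, then build $\T$ by a $C^1$ interpolation between consecutive $a_i$ on $[0, 1/2]$ and an explicit extension on $[1/2, 1]$.

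First, set $c_i \coloneqq 1 + \tfrac{1}{i+2} \in (1, 3/2]$; then $c_i \to 1$ and
\[
\prod_{i \ge 0} c_i = \prod_{i \ge 0}\frac{i+3}{i+2} = \lim_{N\to\infty}\frac{N+3}{2} = +\infty.
\]
Next, pick positive reals $\ell_i$ with $\sum_{i \ge 0}\ell_i = 1/2$ and $\ell_{i+1}/\ell_i \in [1/2, 3/2]$ for every $i$, for instance $\ell_i \propto 1/(i+2)^2$ (with the first few indices slightly perturbed if needed to leave room for the extension below). Setting $a_0 \coloneqq 1/2$ and $a_{i+1} \coloneqq a_i - \ell_i$ yields a decreasing sequence with $a_i \searrow 0$.

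On each $[a_{i+1}, a_i]$, I would take $\T$ to be the $C^\infty$ function whose derivative is the linear-plus-bump
\[
\T'(x) = c_{i+1} + \frac{c_i - c_{i+1}}{\ell_i}(x - a_{i+1}) + B_i\,(x - a_{i+1})(x - a_i),
\]
where $B_i$ is chosen so that the mean slope equals $\ell_{i+1}/\ell_i$; this forces $\T(a_i) - \T(a_{i+1}) = a_{i+1} - a_{i+2}$ and therefore $\T(a_i) = a_{i+1}$, $\T(a_{i+1}) = a_{i+2}$. A direct computation gives $|B_i|\ell_i^2 = O(1/i)$, so $\T'$ stays $O(1/i)$-close to $1$ on $[a_{i+1}, a_i]$ and, in particular, lies well inside $[1/2, 3/2]$ for large $i$ (finitely many small indices can be treated by hand). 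The pieces glue $C^1$-smoothly at each interior $a_i$ by the matching of endpoint values and derivatives, and setting $\T(0) \coloneqq 0$ extends $\T$ to a $C^1$ function on $[0, 1/2]$ with $\T'(0) = 1$. On $[1/2, 1]$ I would take $\T'(x) = 5/2 - 2x$, which matches $\T'(1/2) = c_0 = 3/2$ and stays in $[1/2, 3/2]$; integration yields $\T(1) = a_1 + 1/2 < 1$, and the concave function $\T - \mathrm{id}$ attains its maximum $a_1 - 7/16$ at $x = 3/4$, which is negative as soon as $\ell_0 > 1/16$ (satisfied by the model).

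The inequality $\T(x) < x$ on $(0, 1/2]$ then follows from $\T([a_{i+1}, a_i]) = [a_{i+2}, a_{i+1}]$ lying strictly below the diagonal, and, combined with the extension, it holds on all of $(0, 1]$. By construction $\T^i(1/2) = a_i$ and $\T'(a_i) = c_i$, so the product of interest is exactly $\prod_{i \ge 0}c_i = +\infty$. The main (mild) obstacle is the simultaneous compatibility on each $[a_{i+1}, a_i]$ of the prescribed endpoint derivatives, the prescribed mean slope, and the global bound $\T' \in [1/2, 3/2]$; the linear-plus-bump formula dispatches this by exploiting the fact that both $c_i$ and $m_i \coloneqq \ell_{i+1}/\ell_i$ converge to $1$, so the corrections needed to match the mean slope become uniformly small as $i \to \infty$.
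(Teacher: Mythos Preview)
Your strategy is essentially the paper's: fix the orbit $a_i=\T^i(1/2)$ with gaps $\ell_i\sim i^{-2}$, arrange $\T'(a_i)\sim 1+c/i$ so the product diverges, interpolate $\T'$ piecewise on $[a_{i+1},a_i]$, and extend to $[1/2,1]$. The paper writes $\T=\mathrm{Id}-\rS$, takes $\beta_i=\rS(\alpha_i)\propto(i+10)^{-2}$, and lets its interpolation template determine $\T'(\alpha_i)=1+\tfrac{\beta_i-\beta_{i+1}}{4\beta_i}$; you instead prescribe $c_i$ independently of $\ell_i$, which makes the divergence of $\prod c_i$ more transparent. This is the same construction with different bookkeeping.

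There is, however, a quantitative tension in your concrete parameters that the ``by hand'' remark hides. With $\ell_i\propto(i+2)^{-2}$ one gets $m_0=\ell_1/\ell_0=4/9$, but the mean of $\T'$ on $[a_1,a_0]$ must equal $m_0$, so $m_0<1/2$ is flatly incompatible with $\T'\ge 1/2$ --- no choice of interpolant helps, you must change $\ell_0$. Even after enforcing $m_i\ge 1/2$, the quadratic bump on $[a_1,a_0]$ has amplitude $\tfrac14|B_0|\ell_0^2=\tfrac32\bigl(\tfrac{c_0+c_1}{2}-m_0\bigr)$, and since $c_0=3/2$ sits on the boundary this forces $\T'<1/2$ near the midpoint unless $m_0\gtrsim 0.86$. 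The paper sidesteps both issues by shifting the index (the ``$+10$'' in $(i+10)^{-2}$ makes all ratios $\beta_{i+1}/\beta_i$ uniformly close to $1$) and by not pinning $\T'(\alpha_0)$ to $3/2$. The same fix works cleanly in your setup: take $\ell_i\propto(i+K)^{-2}$ with $K$ large and $c_i=1+\eps/(i+2)$ with $\eps$ small, then use, say, $\T'\equiv c_0$ on $[1/2,1]$ (giving $\T(x)-x=-\ell_0+(c_0-1)(x-1/2)<0$ once $\ell_0>\eps/4$). With those choices your quadratic formula already works for every $i\ge 0$ and no ad hoc step is needed.
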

\begin{proof}

Let us first construct a map $\rS\in C^1([0, 1])$ such that $\rS(0) = 0$, $\rS > 0$ in $(0, 1)$, $-\frac12 \le \rS' \le \frac12$, and $\rS'(\T^i(1/2)) = \gamma_i$. By taking \[\T(x) \coloneqq x - \rS(x),\] we get our result. 

We fix first the points $\T^i(1/2) \coloneqq \alpha_i$, taking $\alpha_1  < \frac12$ and $i\mapsto \alpha_i$ strictly decreasing and converging to zero. At each of these points, we then  fix a value $\beta_i = \rS(\alpha_i)$ such that the following compatibility condition holds: 
\begin{equation}
\label{eq:alphai2}
\alpha_{i+1} = \alpha_i - \beta_i,\qquad\beta_{i+1}<  \beta_i,\qquad\beta_i \downarrow 0\quad\text{as}\quad i \to +\infty.
\end{equation}
Now, the map $\rS$ is defined at each point $\alpha_i$. Let us fix it between in the interval $[\alpha_{i+1}, \alpha_i]$ for each $i\in \N$. To do that, let, for $0<\bar\gamma <\frac12 $,
\[
\begin{aligned}
 &\varphi_{\bar \gamma} \in C^\infty([0, 1]) ,\quad\text{such that } -\frac14 \le\varphi_{\bar\gamma}\le \frac54, \\ &\text{with } \ \varphi_{\bar\gamma}'(0) = -\bar \gamma,\quad  \varphi_{\bar\gamma}'(1) = -\frac14, \quad \varphi_{\bar\gamma}(0) = 0, \quad \varphi_{\bar\gamma}(1) = 1,\quad -\frac12 \le \varphi_{\bar\gamma}'\le \frac32.    
\end{aligned}
\]
We then define, for some $0<\bar\gamma_i<\frac12 $ to be chosen, 
\[
\rS(x)  \coloneqq  \beta_{i+1}+ (\beta_i- \beta_{i+1}) \varphi_{\bar\gamma_i}\left(\frac{x-\alpha_{i+1}}{\alpha_{i}-\alpha_{i+1}}\right),\quad\text{for}\quad x\in (\alpha_{i+1}, \alpha_i). 
\]
To ensure $\rS\in C^1((0, 1/2])$, we impose $\frac{\beta_i - \beta_{i+1}}{4(\alpha_i-\alpha_{i+1})} = \bar\gamma_i \frac{\beta_{i-1} - \beta_{i}}{(\alpha_{i-1}-\alpha_{i})} $, that is, $\bar\gamma_i = \frac{\beta_{i-1}(\beta_i - \beta_{i+1})}{4\beta_i (\beta_{i-1}- \beta_i)}$ from \eqref{eq:alphai2}, and have $\rS'(\alpha_i) = -\frac{\beta_i - \beta_{i+1}}{4\beta_i} $ for all $i\ge 0$. Moreover, if we want to bound $\T'$, we need
\[
-\frac12\le -\frac12 \frac{\beta_i - \beta_{i+1}}{\beta_i}\le \rS'(x) \le \frac32 \frac{\beta_i - \beta_{i+1}}{\alpha_i -\alpha_{i+1}} = \frac32 \frac{\beta_i - \beta_{i+1}}{\beta_i} \le \frac12 \quad\text{in}\quad [\alpha_{i+1}, \alpha_i];
\]
if we also want $\T'$ to be continuous at $0$, we necessarily need $\rS'(x)\to 0$ as $x\to 0$, or, 
\[
\frac{\beta_i - \beta_{i+1}}{\beta_i}\to 0,\quad\text{as}\quad i \to +\infty. 
\]

Finally, we notice that, to have $\rS > 0$ in $(0, 1/2]$, we need $\beta_{i+1}-\frac14 (\beta_i - \beta_{i+1})  >0$, that is, $\frac{\beta_{i+1}}{\beta_i} > \frac15$.

In conclusion, we just need to construct a decreasing sequence $\beta_i$ such that, from \cref{eq:alphai2}   and the considerations above,
\[
\sum_{i \ge 0} \beta_i = \alpha_0 = \frac12,\quad \frac{\beta_{i+1}}{\beta_i} \ge \frac23\quad\text{for}\quad i \ge 0, \quad   \frac{\beta_{i+1}}{\beta_i}\to 1,\quad\text{as}\quad i \to +\infty, 
\]
and
\[
\bar\gamma_i = \frac{\beta_{i-1}(\beta_i - \beta_{i+1})}{4\beta_i (\beta_{i-1}- \beta_i)} < \frac12.
\]

We can take, for example, $\beta_i \coloneqq  \frac{\gamma}{(i+10)^2}$, where $\gamma \coloneqq \frac12 (\sum_{j \ge 10} j^{-2})^{-1}$. Then, $\rS$ is a  map with $\rS\in C^1([0, 1/2])$, such that $\rS(0) = 0$, $\rS > 0$ in $(0, 1)$, $-\frac12 \le \rS' \le \frac12$, $\rS'(0) = 0$, and $\rS'(\T^i(1/2)) = -\frac{\beta_i-\beta_{i+1}}{4\beta_i}$. 

In this situation, we have that $\T(x) = x - \rS(x)$ satisfies 
\[
\T( \alpha_i ) =\alpha_i - \rS(\alpha_i) = \alpha_i - \beta_i = \alpha_{i+1},
\]
$\T\in C^1([0, 1/2])$, with $\T'(x) = 1- \rS'(x) \in [1/2, 3/2]$, and $\T'(\alpha_i) = 1- \rS'(\alpha_i) = 1+\frac{\beta_i - \beta_{i+1}}{4\beta_i}$; therefore, 
\[
\prod_{i \ge 0} \T'( \alpha_i) \ge \frac14 \sum_{i \ge 0} \left(1- \frac{\beta_{i+1}}{\beta_i}\right) = +\infty,
\]
since $1-\frac{\beta_{i+1}}{\beta_i} = 1 - \frac{(i+10)^2}{(i+11)^2} = \frac{2i+21}{(i+11)^2}$.
 By extending the map to $[1/2, 1]$ as needed, we get the desired result.
\end{proof}

Thanks to the previous construction, we can prove that $v$ is not regular or even integrable in general.

\begin{proof}[Proof of \cref{lm:ct}]
Let $\mu_0$  be the uniform measure in $[0, 1/2]$ and let $\T$ be the map constructed in \cref{lem:Tcount2}. Let $\mu_1 \coloneqq \T_\#\mu_0$. Then $\mu_0$ and $\mu_1$ satisfy the hypotheses of \cref{lem:S_1}. 

Let us suppose now that there exists a vector field $v$  defined in $[0, 1/2]$ inducing a unique flow, with $v \le 0$ in $(0, 1/2]$, transporting $\mu_0$ to $\mu_1$. By assumption, $v(0) = 0$ (since $0$ must be transported to $0$), and $v(1/2) < 0$ (since $\T(1/2) < 1/2$ and there is uniqueness of the flow). Then, from \cref{eq:vTi},  we deduce 
\[
v(\T^i(1/2)) = v(1/2) \prod_{j = 0}^{i-1}\T'(\T^j(1/2)) \to - \infty\quad\text{as}\quad i \to \infty,
\]
so $v$ is not bounded around $0$. This proves the first part of the result. 

For the second part, we continue from the construction in \cref{lem:Tcount2}, using the same notation. Let us assume, in such a construction, moreover, that $\varphi'(t) = -\frac14$ for all $t\in [\frac{9}{10}, 1]$. We have that, if $\bar\alpha\in (\alpha_{i+1}, \alpha_i)$ is such that $\alpha_i - \bar \alpha < \frac{1}{10}\beta_i$, then
 \[
 \T^{-1}(\alpha_i) - \T^{-1}(\bar \alpha) = \frac{\alpha_i -\bar \alpha}{1+\frac{\beta_i -\beta_{i+1}}{4\beta_i}}< \frac{1}{10} \frac{\beta_i}{1+\frac{\beta_i - \beta_{i+1}}{4\beta_i}}< \frac{1}{10}\beta_{i-1}
 \]
 whenever $(\beta_i)_i$ is decreasing.

 Hence, we have that, for any $\eta\in (0, \frac{1}{10}]$,
 \[
 \begin{aligned}
  v(x) &\ge \prod_{j = 0}^i \T'(\alpha_i)\inf_{\xi\in (1/2 - \eta \beta_0, 1/2]} v(\xi) \\ & \ge \frac14 \sum_{j = 0}^i \frac{\beta_i - \beta_{i+1}}{\beta_i}\inf_{\xi\in (1/2 - \eta \beta_0, 1/2]} v(\xi),\quad\text{for all} \quad x\in (\alpha_i - \eta \beta_i, \alpha_i].
  \end{aligned}
 \]

 Let us take now $\beta_i = f(i) \coloneqq \frac{\gamma}{i\log^2 i}$ for a suitable universal constant  $\gamma$. Then we have, on the one hand, that 
 \[
 \alpha_i \asymp \int_i^\infty f(s)\, \mathrm d s \asymp \frac{1}{\log i},\quad\text{for}\quad i \ge 2, 
 \]
 where $\asymp$ denotes comparable quantities by universal constants. 
 On the other hand, 
 \[
 \begin{aligned}
 C \sum_{j = 0}^i \frac{\beta_i - \beta_{i+1}}{\beta_i} &\ge  -\sum_{j = 0}^i \frac{f'(i)}{f(i)} = \sum_{j = 0}^i |\log(f)'(i) | \\ &\asymp |\log(f(i))| = |\log(\gamma e^{-\frac{1}{\alpha_i}}\alpha_i^2) |= \frac{1}{\alpha_i} - C - 2\log(\alpha_i).
 \end{aligned}
 \]

 We always fix $\inf_{\xi\in (1/2 - \eta \beta_0, 1/2]} v(\xi) > c_0 > 0$ by continuity of $v$ (this is the only place where continuity is used, to say $\inf_{\xi\in (1/2 - \eta \beta_0, 1/2]} v(\xi) >  0$ for some $\eta  >0$). Then, from the computations above, we deduce
 \[
 v(x) \ge c_0 \left(\frac{1}{\alpha_i} - C - 2\log(\alpha_i)\right),\quad\text{for all}\quad x\in (\alpha_i - \eta \beta_i, \alpha_i]. 
 \]
 In particular, since $\beta_i/\beta_{i+1}\approx 1$ for $i $ large, $v$ does not belong to $L^1$.
\end{proof}

\begin{remark}
    In fact, by choosing $\beta_i \coloneqq \frac{\gamma}{i \log i (\log \log i)^2}$, we can take the previous $v$ not in $L^\eps$ for any $\eps > 0$.
\end{remark}

\section{Construction in the multi-dimensional case}
\label{sec:multid}

To solve \cref{prob:1} and \cref{prob:1bis} in multiple space-dimensions, we rely on the approach to Monge's optimal transport problem proposed by Sudakov in \cite{MR530375}. It consists of writing (through a disintegration) $\mu_0$ and $\mu_1$ as the superposition of measures concentrated on lower-dimensional sets (typically, 1D segments); solving the lower-dimensional transport problems; and, finally, ``gluing'' all the partial transport maps into a single transport map. After a technical gap was found in Sudakov's paper, his program was still carried out successfully: by Ambrosio and Pratelli in \cite{MR2011032,zbMATH01984440} and by Trudinger and Wang in \cite{MR1854255} for the
Euclidean distance; by Ambrosio, Kirchheim, and Pratelli in \cite{MR2096672} for crystalline norms; by Caffarelli, Feldman, and McCann in \cite{MR1862796} for distances induced by
norms that satisfy certain smoothness and convexity assumptions; by Caravenna for general strictly convex norms \cite{zbMATH05909123}; and by Bianchini and Daneri \cite{zbMATH07000064} for general convex norms on finite-dimensional normed spaces.

We consider Sudakov's optimal transport map (associated, \eg, with a strictly convex norm cost) and its decomposition along one-dimensional transport rays\footnote{~If the cost is given by a norm that is not strictly convex, then these {transport rays} need not be one-dimensional.}. We can then directly apply \cref{th:main-1d} to realize these one-dimensional monotone transport maps as the time-$1$ map of the flows associated with suitable (one-dimensional) vector fields. The last step is to piece them together to define a flow in $\R^d$. 

\begin{theorem}[Exact controllability, $d \ge 1$]\label{th:main-d}
  Let us consider $\mu_0, \mu_1 \in \mathcal{P}_{\mathrm{a.c.}}(\R^d)$ (with continuous densities $\bar \mu_0$ and $\bar \mu_1$, respectively) satisfying conditions \crefrange{it:m1}{it:m3}.
Then, there exists a vector field $v: \R^d \to \R^d$ such that 
\[
\T (x) = \phi(1,x), \qquad x \in \R^d,
\]
where $\T$ is Sudakov's transport map (see \cref{th:sudakov} below) and $\phi$ is a solution of \cref{eq:ode} for $x \in {\supp \mu_0}$.
\end{theorem}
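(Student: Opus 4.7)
The approach is Sudakov's disintegration method: decompose the $d$-dimensional optimal transport problem into a family of 1D problems supported on transport rays, apply \cref{th:main-1d} on each ray, and reassemble the one-dimensional velocities into a global vector field.

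First, I would invoke Sudakov's theorem (stated below in the paper) to obtain a Borel partition, modulo a negligible set, of $\mathrm{Conv}(\supp\mu_0 \cup \supp\mu_1)$ into one-dimensional transport rays $\{R_\alpha\}_{\alpha \in A}$, together with a quotient measure $\lambda$ on $A$ and disintegrations
\[
\mu_0 = \int_A \mu_0^\alpha \dd\lambda(\alpha), \qquad \mu_1 = \int_A \mu_1^\alpha \dd\lambda(\alpha),
\]
such that $\mu_i^\alpha$ is concentrated on $R_\alpha$ and Sudakov's optimal map $\T$ restricted to $R_\alpha$ coincides with the unique monotone 1D transport map between $\mu_0^\alpha$ and $\mu_1^\alpha$. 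A preliminary technical point is to verify that, for $\lambda$-almost every $\alpha$, the conditional measures $\mu_i^\alpha$ inherit the hypotheses \ref{it:m1}--\ref{it:m3} of \cref{th:main-1d}; this should follow from the regularity theory for Sudakov-type disintegrations combined with the continuity and positivity of $\bar\mu_0$ and $\bar\mu_1$ on their supports.

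Second, I would parametrize each ray $R_\alpha$ by arc length along a unit direction $e_\alpha \in \R^d$ and apply \cref{th:main-1d} to $(\mu_0^\alpha, \mu_1^\alpha)$ to obtain a scalar velocity field $v^\alpha$ whose associated 1D flow realizes $\T|_{R_\alpha}$ at time $t=1$. I would then glue these into a global $d$-dimensional vector field by setting
\[
v(x) \coloneqq v^\alpha(s(x))\, e_\alpha \quad \text{for } x \in R_\alpha,
\]
where $s(x)$ is the arc-length coordinate of $x$ on $R_\alpha$. Since $v(x)$ is parallel to the direction of $R_\alpha$ at every $x \in R_\alpha$, any integral curve of $v$ starting on a ray stays on that ray and coincides there with the 1D flow generated by $v^\alpha$. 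By construction, $\phi(1,x) = \T(x)$ for $\mu_0$-almost every $x$, which gives $\phi(1,\cdot)_\#\mu_0 = \T_\#\mu_0 = \mu_1$.

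The main obstacle is the measurable selection required in the gluing step. \cref{th:main-1d} leaves free an arbitrary prescription of the 1D vector field on a fundamental subinterval (\cf \cref{rk:non-uniqueness1}), and this prescription must be chosen in a jointly Borel manner in $\alpha$ so that the reassembled field $v$ is a Borel vector field on $\R^d$. Once measurability is secured, existence of $\phi$ reduces to the 1D existence along each ray; however, one does not expect uniqueness of $\phi$ in this generality, since trajectories originating on different rays may interact at the boundary of the ray partition. This is the reason the statement of \cref{th:C} only asserts existence of a flow (and matches the non-uniqueness comment made just after it).
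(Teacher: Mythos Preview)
Your proposal is correct and follows essentially the same route as the paper: invoke Sudakov's disintegration, check that the one-dimensional conditional measures satisfy \ref{it:m1}--\ref{it:m3}, apply \cref{th:main-1d} ray by ray, and glue. The paper's own verification of the ray hypotheses is even lighter than yours---it simply observes that, because $\bar\mu_0$ and $\bar\mu_1$ are continuous and positive on convex supports, the restrictions $\mu_i\Restr{I^1_\alpha}$ are directly absolutely continuous with continuous positive densities on intervals, so no deep regularity of the disintegration is needed. Your concern about measurable selection of the free data in \cref{rk:non-uniqueness1} is legitimate and is in fact more careful than the paper, which does not address this point explicitly; the paper simply declares $v(x)\coloneqq v_\alpha(x)$ for $x\in I^1_\alpha$ and notes (in \cref{rmk:uniqueness}) that uniqueness of the flow is not claimed.
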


The vector field $v$ constructed in \cref{th:main-d} is defined on the Borel partition of $\R^d$ into optimal transport rays $\left\{I_{\alpha}^1\right\}_{\alpha}$ (\cf \cref{th:sudakov}): \ie, $v(x) \coloneqq v_{\alpha}(x)$ for $\mu_0\Restr{I_\alpha}$-a.e. $x \in I_{\alpha}^1$, and $v_{\alpha}$ is the one-dimensional velocity field obtained in  \cref{th:main-1d} corresponding to the one-dimensional monotone optimal transport map $\T _{\alpha}$ on the oriented line associated with $I_\alpha^1$. Moreover, the flow $\phi$ is the unique flow (almost everywhere) whose trajectories are concentrated on transport rays.

We observe that $v$ constructed our way do not need to be Lipschitz continuous (or even continuous) or satisfy the assumptions of the theory developed by DiPerna--Lions--Ambrosio (see~\cite{MR2096794,MR1022305}); see also \cref{lm:ct}.

\subsection{Preliminaries on Sudakov's theorem} 
\label{ssec:sudakov}

For completeness, following the notation of \cite[Chapter 18]{MR4659653}, let us outline Sudakov's result. We consider Monge's problem with Euclidean norm cost:
\begin{align}\label{eq:monge-d}
\mathrm{M}^d(\mu_0, \mu_1) \coloneqq \min \left\{\int_{\mathbb{R}^d}\|\T(x)-x\| \, \mathrm d \mu_0(x):\, \T: \R^d \to \R^d \text{ and } \mu_1 = \T_{\#} \mu_0\right\}\, .
\end{align}

The first step in Sudakov's approach consists in finding a suitable partition of  $\mathbb{R}^d$ on which the transport occurs (namely such that Kantorovich's optimal plans move the initial mass inside the elements of the partition). Given a \emph{Kantorovich potential}\footnote{~Let us recall the duality formula Monge's problem. We have the equality
\[
\mathrm{M}(\mu_0,\mu_1) \equiv \sup \left\{\int_{\R} f(y) \, \mathrm{d} \mu_1(y)-\int_{\R} f(x) \, \mathrm{d} \mu_0(x): \, f \in \operatorname{Lip}_1\left(\mathbb{R}^d\right)\right\},
\]
where $\operatorname{Lip}_1\left(\mathbb{R}^d\right)$ denotes the set of $1$-Lipschitz functions $\mathbb{R}^d \rightarrow \mathbb{R}$. The optimal functions $f$ are called \emph{Kantorovich potentials}.} $f$ from $\mu_0$ to $\mu_1$ (see~\cite[Theorem 3.17]{MR4659653}), we define 
\[
G(f) \coloneqq \{(a,b) \in \R^d \times \R^d:\, f(b) - f(a) = \|a-b\|\}, 
\]
and consider open oriented segments $I_{\alpha}^1\coloneqq ] a_\alpha, b_\alpha[ \subset \mathbb{R}^d$ (where $\alpha \in \mathfrak A$ is a continuous parameter) whose extreme points belong to $G(f)$ and which are maximal with respect to set-inclusion: these are called \emph{optimal rays}. By definition of Kantorovich potential, all transport has to occur on these rays. We will use the notation  $\R_\alpha^1$ for the oriented line corresponding to $I^1_\alpha$. We call \emph{transport set} (relative to $G$), the union of all transport rays: $\bigcup_{\alpha} I^1_\alpha$. The optimal rays $\left\{I_{\alpha}^1\right\}_{\alpha}$ form a Borel partition  of ${{\rm Conv}(\supp \mu_0 \cup \supp \mu_1)}$ into one-dimensional open segments, up to the sets of their initial points and final points (which are $\Leb^d$-negligible and then also $\mu_0$-negligible).

The second step is decomposing the transport problem and  reducing it to a family of independent one-dimensional transport problems. If $\mu_0^\alpha \coloneqq \mu_0\Restr{I^1_\alpha}$ has no atoms, then the unique transference plan concentrated on a monotone graph in $I_{\alpha}^1 \times I_{\alpha}^1$ is actually concentrated on an optimal transport map $\T _{\alpha}$ (\cf \cref{th:ot1d}). Then the transport map $\T $ for the multi-dimensional problem is obtained by assembling the family $\{\T _{\alpha}\}_{\alpha}$ of one-dimensional maps\footnote{\label{fn:regularity}~The regularity of $\T $ is a delicate issue (see, \eg, the discussion in \cite{zbMATH06379826,MR3277433,zbMATH06938396}). For $d=2$ and using the cost of \cite{zbMATH01984440}, the continuity of the Sudakov's ray-monotone optimal transport map $\T $ on the interior of $\supp \mu_0$ was established in \cite{zbMATH02228669} assuming the following conditions hold: 
    \begin{enumerate}[label=\textbf{R-\arabic*}] 
    \item $\mu_0, \mu_1 \in \mathcal P_{\mathrm{a.c.}}(\R^2)$ with densities $\bar \mu_0$ and $\bar \mu_1$;
    \item $\supp \mu_0$ and $\supp \mu_1$  are compact, convex, and disjoint subsets of $\R^2$;
    \item $\bar \mu_0$ and $\bar \mu_1$ are continuous functions on their respective supports; 
    \item  $\bar \mu_0$ and $\bar \mu_1$ are strictly positive in the interior of their respective supports. 
\end{enumerate}
A more recent refinement is contained in \cite{zbMATH07005087}, removing the strict separation assumption, and using a different geometric set of hypotheses.}. The main technical difficulty in Sudakov's approach (and the flawed point in Sudakov's original contribution \cite{MR530375}, which was subsequently amended in the references mentioned above) is proving that the disintegration of $\Leb^d$ on the optimal rays has indeed non-atomic conditional measures. 

We recall the final statement below, following \cite[Theorems 18.1 and 18.7]{MR4659653}.

\begin{theorem}[Sudakov's optimal transport map]\label{th:sudakov}
 If $\mu_0, \mu_1 \in \mathcal{P}\left(\mathbb{R}^d\right)$ and $\mu_0 \ll \mathscr{L}^d$ is absolutely continuous, then there exists a (Borel measurable) transport map $\T$ from $\mu_0$ to $\mu_1$ satisfying \cref{eq:monge-d}. Furthermore, $\T$ is obtained as follows: $\T(x) \coloneqq  \T_\alpha(x)$ if  $x \in  I^1_\alpha$, where $\T_\alpha:I^1_\alpha \to  I^1_\alpha$ is the monotone transport map on the ray $I^1_\alpha$.  Moreover, if we also have $\mu_1 \ll \mathscr{L}^d$, then we can find an optimal transport map $\T$ for $\mathrm{M}^d(\mu_0, \mu_1)$ and an optimal transport map $\mathrm S$ for $\mathrm{M}^d(\mu_1, \mu_0)$ such that $\mathrm S \circ \mathrm T= \mathrm{Id}$, $\mu_0$-a.e. on $\mathbb{R}^d$ and $\mathrm \T \circ \mathrm S=\mathrm{Id}$, $\mu_1$-a.e. on $\mathbb{R}^d$. 
\end{theorem}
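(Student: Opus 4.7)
The plan is to follow Sudakov's disintegration strategy, made rigorous in the works of Ambrosio--Pratelli, Caffarelli--Feldman--McCann, and the other authors listed at the beginning of Section~\ref{sec:multid}. First, I invoke Kantorovich duality for the Euclidean-cost Monge problem \cref{eq:monge-d} to produce a $1$-Lipschitz Kantorovich potential $f:\R^d\to\R$ such that every optimal transport plan from $\mu_0$ to $\mu_1$ is concentrated on the contact set $G(f)=\{(a,b):f(b)-f(a)=\|b-a\|\}$. Since $f$ is $1$-Lipschitz, if $(a,b)\in G(f)$ and $c\in[a,b]$, then $(a,c),(c,b)\in G(f)$, and two maximal segments contained in $G(f)$ can share at most a common endpoint. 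This produces the Borel partition $\{I^1_\alpha\}_{\alpha\in\mathfrak A}$ of the transport set into maximal open oriented transport rays, and a direct argument (endpoints project onto level sets of $f$ along the ray-direction field) shows that the set of their initial and final points is $\mathscr L^d$-negligible, hence $\mu_0$-negligible by absolute continuity.

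The second step, and the anticipated main obstacle, is to disintegrate $\mu_0$ along $\{I^1_\alpha\}$ and prove the non-atomicity of the conditional measures $(\mu_0)_\alpha$. This is precisely the point where Sudakov's original argument contained a gap: a priori, $\mu_0\ll\mathscr L^d$ does not pass to ray-conditionals in a controlled way, because the quotient map to the index set $\mathfrak A$ need not be regular. The standard resolution is a localization. By Rademacher's theorem, $f$ is differentiable $\mathscr L^d$-a.e., and $\nabla f(x)$ coincides with the unit direction of the ray through $x$ wherever the latter is defined; by Lusin's theorem, one can cover the transport set, up to a negligible set, by countably many Borel pieces $E_n$ on each of which this direction field agrees with a Lipschitz map. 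On each $E_n$ one constructs a bi-Lipschitz parametrization of the rays by (transversal)\,$\times$\,(arclength) and, via a Fubini/area-formula computation, obtains that the conditionals of $\mathscr L^d\Restr{E_n}$ along rays are absolutely continuous with respect to $\Haus^1\Restr{I^1_\alpha}$. Combined with $\mu_0\ll\mathscr L^d$ and assumption \ref{it:m3}, this yields that $(\mu_0)_\alpha$ is non-atomic (in fact, carries a continuous positive density) on each ray.

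With non-atomicity in hand, the third step applies the one-dimensional Monge theorem \cref{th:ot1d} on each oriented ray $I^1_\alpha$: there is a unique monotone transport map $\T_\alpha:I^1_\alpha\to I^1_\alpha$ with $(\T_\alpha)_\#(\mu_0)_\alpha=(\mu_1)_\alpha$. Setting $\T(x):=\T_\alpha(x)$ for $x\in I^1_\alpha$, Borel measurability follows from a standard measurable selection argument (the partition and the quotient map are Borel), and $\T_\#\mu_0=\mu_1$ upon reassembling the disintegration. Optimality follows directly from Kantorovich duality: for $\mu_0$-a.e.\ $x$ we have $(x,\T(x))\in G(f)$, hence
$\int\|\T-\mathrm{Id}\|\,\d\mu_0=\int(f\circ\T-f)\,\d\mu_0=\int f\,\d\mu_1-\int f\,\d\mu_0=\mathrm M^d(\mu_0,\mu_1)$.

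For the ``moreover'' statement, when additionally $\mu_1\ll\mathscr L^d$, I would repeat the whole construction with the roles of $\mu_0$ and $\mu_1$ interchanged and the potential $-f$, obtaining a Sudakov map $\mathrm S$ from $\mu_1$ to $\mu_0$. The transport rays coincide with the previous ones up to reversal of orientation, so $\mathrm S_\alpha$ is the one-dimensional monotone transport from $(\mu_1)_\alpha$ to $(\mu_0)_\alpha$ on the reversed ray. By the uniqueness clause of \cref{th:ot1d} applied to non-atomic one-dimensional measures (valid on both sides by the disintegration step for $\mu_0$ \emph{and} for $\mu_1$), $\mathrm S_\alpha\circ\T_\alpha=\mathrm{Id}$ and $\T_\alpha\circ\mathrm S_\alpha=\mathrm{Id}$ on $I^1_\alpha$; gluing over $\alpha$ yields $\mathrm S\circ\T=\mathrm{Id}$ $\mu_0$-a.e.\ and $\T\circ\mathrm S=\mathrm{Id}$ $\mu_1$-a.e.
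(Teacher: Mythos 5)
The paper does not actually prove \cref{th:sudakov}: it is recalled verbatim from the literature (Theorems 18.1 and 18.7 of \cite{MR4659653}, together with the references to Ambrosio--Pratelli, Caffarelli--Feldman--McCann, etc.\ listed at the start of Section 4), so there is no internal proof to compare against. Judged on its own, your outline reproduces the standard Sudakov program in the right order: duality and the ray partition, negligibility of ray endpoints, disintegration and non-atomicity of the conditionals, the one-dimensional monotone maps of \cref{th:ot1d} on each ray, measurable gluing, optimality via the potential, and the inversion statement when $\mu_1\ll\mathscr L^d$ (where your use of uniqueness of the monotone map on both sides is the correct mechanism).

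There is, however, a genuine gap exactly at the step you yourself identify as the main obstacle. You claim that ``by Lusin's theorem'' the ray-direction field agrees with a Lipschitz map on countably many Borel pieces covering the transport set. Lusin's theorem only provides \emph{continuous} representatives off sets of small measure; Lipschitz-type Lusin approximation requires approximate differentiability (Federer/Whitney), and the direction field --- which is $\nabla f$ a.e.\ for a merely Lipschitz potential $f$ --- is just an $L^\infty$ vector field, so neither version delivers the countably Lipschitz decomposition you need for the Fubini/area-formula argument. This is precisely the point where Sudakov's original argument broke down; the rigorous proofs obtain local Lipschitz estimates for the ray directions from the geometry of non-crossing rays (points whose rays extend a definite length beyond two level sets of $f$), or use the refined disintegration techniques of Caravenna and Bianchini--Daneri, and none of this is a soft measure-theoretic fact. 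As written, your Step 2 would not go through without importing that hard lemma from the cited works. Two smaller slips: the positivity assumption \ref{it:m3} is not a hypothesis of \cref{th:sudakov} (it belongs to \cref{th:main-d}) and is not needed for non-atomicity, and in the generality of the theorem the conditionals of $\mu_0$ need not have a ``continuous positive density'' on the rays --- absolute continuity with respect to $\Haus^1$ on each ray is all one gets and all one needs.
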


Such a \emph{ray-monotone} map is unique: that is, there exists a unique transport map $\T$ between $\mu_0$ and $\mu_1$ such that, for each maximal transport ray $I^1_\alpha$, $\mathrm T$ is non-decreasing from the segment $I^1_\alpha \cap {\supp \mu_0}$ to the segment $I_\alpha^1 \cap {\supp \mu_1}$.

\subsection{Proof of \texorpdfstring{\cref{th:main-d} (and \cref{th:C})}{Theorem 3.1 (and Theorem C)}}
\label{ssec:proof-multid}

In this section, we prove  \cref{th:main-d}.  If we are able to show that $\mu_0\Restr{I_{\alpha}^1}$ and $\mu_1\Restr{I_{\alpha}^1}$ satisfy the assumptions of \cref{th:main-1d}, we can then conclude by invoking Sudakov's decomposition. 

\begin{proof}[Proof of \cref{th:main-d} (and, therefore, \cref{th:C})]
By \cref{th:sudakov}, there exists a (Borel measurable) transport map $\T$ from $\mu_0$ to $\mu_1$, satisfying \cref{eq:monge-d}, such that $\T(x) = \T_\alpha(x)$ for $\mu_{0}^\alpha$-a.e.  $x \in  I_\alpha^1 \cap {\supp \mu_0}$, where $\T_\alpha: I_\alpha^1 \cap {\supp \mu_0}  \to I_\alpha^1 \cap {\supp \mu_1}$ is the monotone transport map on the line $\R_\alpha^1$, and we recall that the measures $\mu_0^\alpha$ and $\mu_1^\alpha$ are the restrictions of $\mu_0$ and $\mu_1$ to $I_\alpha^1$ respectively (which are well defined pointwise, because $\mu_0$ and $\mu_1$ have continuous densities)\footnote{That is, $\mu_i^\alpha$ form a  disintegration of $\mu_i$ (for $i \in \{0,1\}$) with respect to the partition $\left\{I^1_\alpha\right\}_\alpha$, i.e., 
$
\mu_i=\int \mu_i^\alpha f_{\#} \mu_i(\mathrm{d} \alpha)
$
where $f$ is the \textit{partition function}, $f^{-1}(\alpha)=I^1_\alpha$ (see \cite[Section 452]{MR1857292}).}.  

If the measures $\mu_0^{\alpha}$ and $\mu_1^{\alpha}$ satisfy assumptions of \cref{th:main-1d}, then  $\T _{\alpha}$ is induced by a vector field $v_{\alpha}: I_\alpha^1 \cap {\supp \mu_0} \to \R$ and we conclude the proof.

In other words, it suffices to show that the following conditions hold: 
\begin{enumerate}[label=\textbf{M-\arabic*}$_\alpha$,start=0]
\item \label{it:m0a} $\mu_0^\alpha, \mu_1^\alpha \in \mathcal P_{\mathrm{a.c.}}(I_\alpha^1)$ and their densities, $\bar \mu^{\alpha}_0$ and $\bar \mu^{\alpha}_1$, are continuous functions (in their respective supports); 
\item \label{it:m1a} $\supp \mu_0^{\alpha}$ and $\supp \mu_1^{\alpha}$ are convex; 
        \item \label{it:m3a} $\bar \mu_0^{\alpha} >0$ in $\supp \mu_0^\alpha$ and $\bar \mu_1^{\alpha} >0$ in $\supp \mu_1^\alpha$. 
    \end{enumerate} 
    The validity of the first part of \cref{it:m0a} is a key contribution in Sudakov's disintegration result; in our setting, though, assuming additionally that the densities $\bar \mu_0$ and $\bar \mu_1$ are continuous functions, it is straightforward, and so is the second part of \cref{it:m0a};     \cref{it:m1a} follows from the fact that $\supp \mu_0$ and $\supp \mu_1$ are convex, which yields that their intersection with each transport ray is an interval; and  \cref{it:m3a} holds by definition of restriction. 
\end{proof}
\begin{remark}[On the uniqueness of the flow]\label{rmk:uniqueness}
    We remark that the flow constructed in \cref{th:main-d} is the  unique flow (almost everywhere) whose trajectories are concentrated on a given partition of transport rays. A priori, however, such a flow might not be unique in general, since the regularity of the vector field $v$ is not enough to apply classical results on uniqueness of solutions.

Nevertheless, the constructed flow is the unique solution of the explicit Euler scheme as the step size goes to zero, which may suffice for most practical applications.
\end{remark}

\section{Examples}
\label{sec:examples}

We conclude by presenting a few examples about the construction of a (one-dimensional) velocity $v: \R \to \R$ addressing  \cref{prob:2} for simple transport maps.

 \begin{example}[Transport map with one ``good'' fixed point]\label{ex:supports1}
     Let $\mu_0\coloneqq \chi_{[1,2]} \Leb^1$ and $\mu_1\coloneqq \frac{1}{3}\chi_{[0,3]} \Leb^1$. The monotone transport map between $\mu_0$ and $\mu_1$ is $\T (x)=3x - 3$. A solution to Abel's and Julia's equations can be given explicitly as follows:
     \[\begin{aligned} F(x) &= c + \frac{\log\left(\left|x-\frac{3}{2}\right|\right)}{\log(3)}, && x \in \R, \text{ for any } c \in \R, \\ 
     v(x) &= x\log(3) -\frac{3}{2} \log(3), && x \in \mathbb R.
     \end{aligned}
     \]
     This yields 
     $\phi(t,x) = -3/2 (-1 + 3^t) + 3^t x $, so that $\phi(1,x) = 3x - 3$ solves \cref{prob:2}.
     
     We observe that the map $\T $ has a fixed point,  $\bar x =3/2$ and $v(3/2) = 0$, while $F$ is not defined there.
 \end{example}

\begin{example}[Gaussian measures]\label{ex:gaussian}
Let us consider $\mu_0\coloneqq\mathcal{N}\left(m_0, \sigma_0^2\right)$ and $\mu_1\coloneqq\mathcal{N}\left(m_1, \sigma_1^2\right)$ be two Gaussian measures\footnote{~We recall that, by definition, ${\mathcal{N}} (m,\sigma^2)$ has density $\frac{1}{\sigma \sqrt{2 \pi}} e^{-\frac{1}{2}\left(\frac{x-m}{\sigma}\right)^2}$.} in $\mathbb{R}$. The monotone transport map between $\mu_0$ to $\mu_1$ is given by 
\[
\T(x)=\frac{\sigma_1}{\sigma_0}x- \frac{\sigma_1}{\sigma_0} m_0+m_1
\]
(here, we take $\sigma_0, \, \sigma_1 >0$). $\T$ coincides with the identity map if $m_0 = m_1$ and $\sigma_0 = \sigma_1$; has no fixed points if $\sigma_0 = \sigma_1$ and $m_0 \neq m_1$; and has one fixed point at $\bar x = \sigma_0 \frac{m_0-m_1}{\sigma_0-\sigma_1}$ if $\sigma_0 \neq \sigma_1$. At $\bar x$, the densities of the two measures measures do not coincide. The first case is trivial (as we can take $v \equiv 0$); in the other two, using  \cref{th:main-1d} (in particular, \cref{lem:S_empty_ub} or \cref{lem:S_1_ub}), we can build a suitable velocity field $v$.  In particular, we note that a solution to Abel's and Julia's equations can be given explicitly as follows:  
     \begin{align*}
     F(x) &= c + \frac{\log\left(\left|x - \frac{  \sigma_1  m_0-\sigma_0 m_1}{ \sigma_1-\sigma_0 }\right|\right)}{\log\left(\frac{\sigma_1}{\sigma_0}\right)},  &&x \in \R, \text{ for any } c \in \R, \\
     v(x) &= x \log\left(\frac{\sigma_1}{\sigma_0}\right) -  \log\left(\frac{\sigma_1}{\sigma_0}\right) \frac{\sigma_1 m_0-\sigma_0 m_1  }{{\sigma_1}-\sigma_0}, && x \in \R. 
     \end{align*}
     See \cref{fig:plot-gaussians} for an illustration.
\end{example}
         \begin{figure}
         \centering
         \includegraphics[scale=0.45]{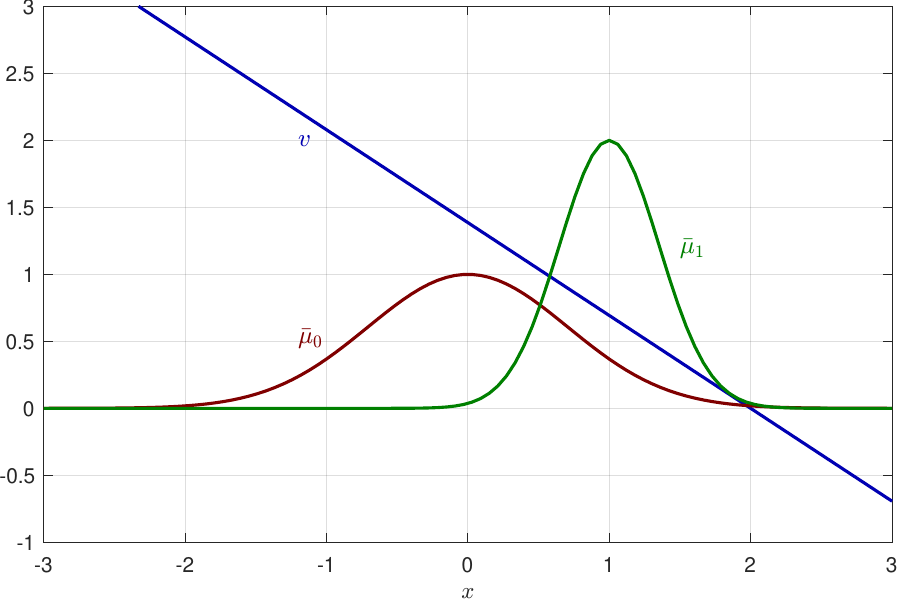}~
         \includegraphics[scale=0.55]{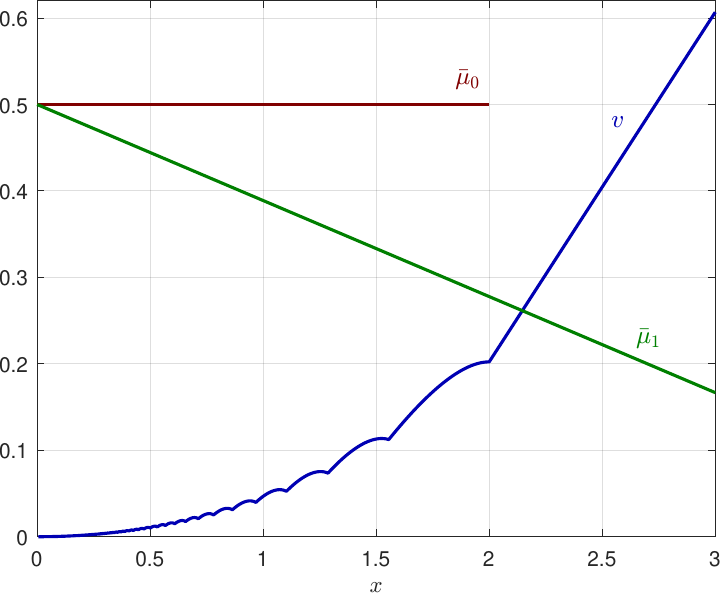}
         \caption{On the left, the vector field transporting a Gaussian $\bar\mu_0 (x) = e^{-x^2}$ into a translated and rescaled Gaussian $\bar \mu_1(x) = 2e^{-4(x-1)^2}$ is given by the linear function $v$ here depicted (as explained in \cref{ex:gaussian} and \cref{ex:affine}). On the right, the densities $\bar\mu_0$ (in red) and $\bar\mu_1$ (in green) from \cref{ex:infty2}.  The  velocity field $v$ (in blue) can be constructed arbitrarily in the interval $[2, 3]$, and this fixes the values uniquely in $[0, 2]$ as well (in this case, we are not trying to match higher derivatives as in \cref{cor:Ck}, so  $v$ is not necessarily $C^1$). Plot created with MATLAB \cite{MATLAB}.
         } 
         \label{fig:plot-gaussians}
     \end{figure}

 \begin{example}[Affine transport maps]\label{ex:affine}
     The previous  examples are particular cases of equivalent measures under affine transformations. Namely, if, in general,
     \[
     \mu_0(\d x) \coloneqq f(x) \mathscr{L}^1(\d x)\quad\text{and}\quad \mu_1(\d x) \coloneqq \alpha f(\alpha(x-\beta))\mathscr{L}^1(\d x)\quad\text{for some}\quad \alpha>0,\,  \beta\in \R,
     \]
     where the density $f$ is positive and continuous in its (convex) support, then the monotone map transporting $\mu_0$ into $\mu_1$ is 
     \[
     \T(x) = \frac{x}{\alpha}+\beta
     \]
     which has a single fixed point at 
     \[
     x_{\alpha\beta} \coloneqq \frac{\alpha\beta}{\alpha-1}.
     \]
     If $\alpha = 1$, this was just a translation and we can fix $v \equiv c$ constant in the whole space. Otherwise, we can take 
     \[
     v(x) = \left\{
     \begin{array}{ll}
     x - x_{\alpha\beta}& \quad\text{if}\quad\alpha\in (0, 1),\\
     x_{\alpha\beta}-x & \quad\text{if}\quad \alpha > 1,
     \end{array}
     \right.
     \]
     and then adjust a multiplicative constant on $v$ so that 
     \[
     \left|\int_0^\beta \frac{\d x}{x-x_{\alpha\beta}} \right|= 1.
     \]
     See Figure~\ref{fig:plot-gaussians} for a sketch in the case of transporting Gaussian measures one into another.
 \end{example}

\begin{comment}
    
\begin{example}[Transport map with two ``good'' fixed points]\label{ex:2}
        Let $\mu_0 \coloneqq (1-x)\chi_{[-1/2,1/2]} \mathscr L^1$ and $\mu_1 \coloneqq (1+x)\chi_{[-1/2,1/2]} \mathscr L^1$. The monotone transport map between $\mu_0$ and $\mu_1$ is  $\T = \frac12 (-2 + \sqrt{2(3 + 4 x - 2 x^2)})$, which has two fixed points, $\mathcal S = \{-1/2, 1/2\}$. Moreover, $\bar \mu_0 \neq \bar \mu_1$ on $\mathcal S$.  We let $v(-1/2) = v(1/2) = 0$ and, using \cref{th:main-1d} (in particular, \cref{lem:S_2}), we can construct a Lipschitz continuous velocity field in $[-1/2,1/2]$ solving \cref{prob:2}. 
    \end{example}
\end{comment}

\begin{comment}
          \begin{figure}
         \centering
         \includegraphics[scale=0.7]{onefixedpoint.pdf}
         \caption{The densities $\bar\mu_0$ (in red) and $\bar\mu_1$ (in green) from \cref{ex:infty2}.  The  velocity field $v$ (in blue) can be constructed arbitrarily in the interval $[2, 3]$, and this fixes the values uniquely in $[0, 2]$ as well. In this case, we have chosen a linear construction (with the integral of the reciprocal equal to $1$, to make time-$1$ maps), that matches the end-points in $[2, 3]$ following the rule in \eqref{eq:recurs_v0}. As proven in \cref{th:main-1d}, this extends to a continuous map, but since we are not trying to match higher derivatives (as in \cref{cor:Ck}), such a $v$ is not $C^1$. Plot created with MATLAB \cite{MATLAB}.
         } 
         \label{fig:plot-mu2}
     \end{figure}
\end{comment}

\begin{example}[Transport map with one ``bad'' fixed point]\label{ex:infty2}
    Let $\mu_0 \coloneqq \frac12 \chi_{[0,2]} \mathscr{L}^1$ and $\mu_1(\mathrm d x) \coloneqq \left( \frac{1}{2}-\frac19 x\right) \chi_{[0,3]}(x) \mathscr{L}^1(\d x)$. The monotone transport map that brings $\mu_1$ to $\mu_0$ is   
    \[
    \T^{-1}(x) = x-\frac19 x^2
    \]
It has a single fixed point at $\bar x = 0$, where the densities of both measures coincide. Thanks to \cref{th:main-1d}, we can construct a velocity field in $[0,3]$, which follows for any arbitrary $v$ fixed in $[2, 3]$. See \cref{fig:plot-gaussians} for one such example.
\end{example}

\begin{example}[Transport map with a sequence of ``good'' fixed points]\label{ex:infty-2}
Let $\mu_0 \coloneqq  \chi_{[0,1]} \mathscr L^1$ and
$\T(x) \coloneqq x + \frac15 x^3\sin(\pi/x)\in C^1([0, +\infty))$, which has fixed points \[\mathcal S = \{0\} \cup \left\{\, \frac{1}{n}: \ n \in \mathbb Z\setminus{\{0\}}   \right\}.\] 
In $\mathcal S$, $0$ is an accumulation point. 
We define  
\(\mu_1 \coloneqq \T_\# \mu_0 \) (so we have $\mu_1 = \bar \mu_1 \mathscr L^1$, with $\bar \mu_1 = (\T^{-1})'\,\chi_{[0,1]}\in C([0, 1])\cap C^\infty((0, 1))$; see \cref{fig:plot-mu1} for an illustration). Moreover,  $\bar \mu_0 \neq \bar \mu_1$ in $\mathcal S\setminus\{0\}$. Then $\mu_0$ and $\mu_1$  satisfy the hypotheses of \cref{th:main-1d}, and we can construct a Lipschitz continuous velocity field solving \cref{prob:2} in $(0, 1]$. 

     \begin{figure}
         \centering
         \includegraphics[scale=0.55]{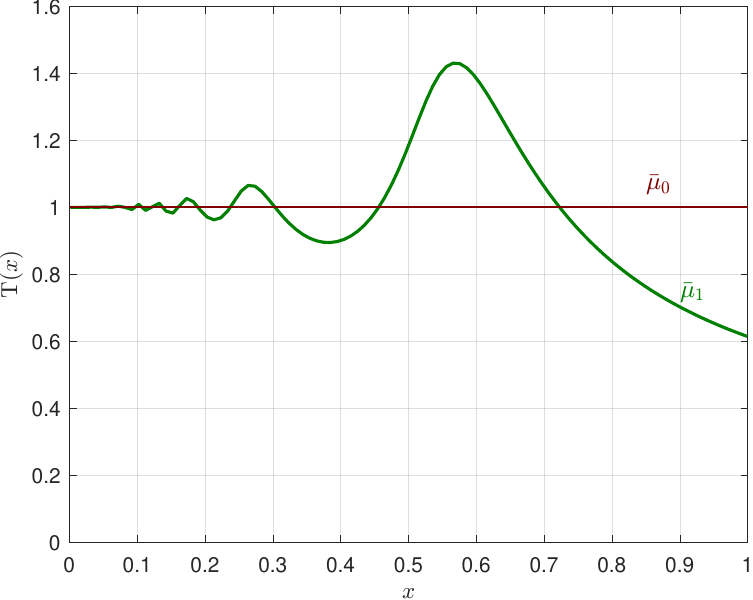}
                \hspace{0.5cm}  \includegraphics[scale=0.55]{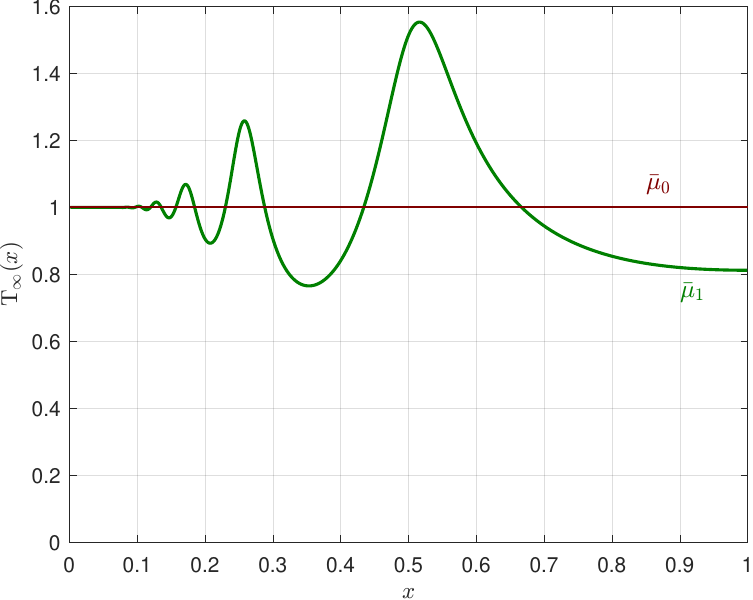}
         \caption{The densities $\bar \mu_0$ (in red) and $\bar \mu_1$ (in green) from \cref{ex:infty-2} for the $C^1$ map $\T$ (left) and for the $C^\infty$ map $\T_\infty$ (right). Plot created with MATLAB \cite{MATLAB}.}
         \label{fig:plot-mu1}
     \end{figure}

In fact, we can even make the previous transport map, and thus $\mu_1$, to be smooth up to the endpoints (\ie, $C^\infty([0, 1])$), by taking, for example, $\T_\infty(x) \coloneqq x + \frac15 e^{-\frac{1}{x}}\sin(\pi/x)\in C^\infty([0, +\infty))$.
\end{example}

\vspace{0.5cm}
\section*{Acknowledgments}

We are grateful to E.~Zuazua for suggesting the problem and for his insightful comments on the manuscript. We also thank M.~Colombo, A.~Figalli, F.~Glaudo, and M.~C.~Zdun for helpful discussions on the topics of this work. 

N.~De Nitti is a member of the Gruppo Nazionale per l’Analisi Matematica, la Probabilità e le loro Applicazioni (GNAMPA) of the Istituto Nazionale di Alta Matematica (INdAM). He has been funded by the Swiss State Secretariat for Education, Research and Innovation (SERI) under contract number MB22.00034 through the project TENSE.

X.~Fern\'andez-Real has been funded by the Swiss National Science Foundation (SNF grant PZ00P2\_208930), by the Swiss State Secretariat for Education, Research and Innovation (SERI) under contract number MB22.00034 through the project TENSE, and by the AEI project PID2021-125021NA-I00 (Spain).

The authors are grateful to the anonymous referees for their very useful corrections and suggestions. 

\vspace{0.5cm}

\printbibliography
\vfill 
\end{document}